\pgfplotsset{compat=newest}
\date{\today}
\newtheorem{theorem}{Theorem}
\newtheorem{lemma}[theorem]{Lemma}
\newtheorem{cor}[theorem]{Corollary}
\newtheorem{prop}[theorem]{Proposition}
\newtheorem{remark}[theorem]{Remark}
\theoremstyle{definition} 
\newcommand{\<}{\langle{}}
\renewcommand{\>}{\rangle}
\newcommand{\ip}[2]{\llangle#1\hspace*{.5mm},#2\rrangle}
\newcommand{\dual}[2]{\<#1\hspace*{.5mm},#2\>}
\newcommand{\dualRM}[3]{\<#1\hspace*{.5mm},#2\>_{#3}}
\newcommand{\vdual}[2]{(#1\hspace*{.5mm},#2)}
\newcommand{\wilde}{\widetilde}
\newcommand{\wat}{\widehat}
\newcommand{\transp}{\mathsf{T}}
\def\Grad{\boldsymbol{\varepsilon}}
\def\Div{{\rm\bf div\,}}
\def\pwDiv{{\rm\bf div}_\cT}
\def\grad{\nabla}
\def\pwgrad{\nabla_\cT}
\def\MM{\mathbf{M}}
\def\II{\mathbf{I}}
\def\TTheta{\mathbf{\Theta}}
\def\bq{\boldsymbol{q}}
\def\rr{\mathbf{r}}
\newcommand{\bL}{\ensuremath{\mathbf{L}}}
\newcommand{\LL}{\ensuremath{\mathbb{L}}}
\def\tq{\wat{\boldsymbol{q}}}
\def\btheta{\boldsymbol{\theta}}
\def\tv{\wat{\boldsymbol{v}}}
\newcommand{\bg}{\ensuremath{\mathbf{g}}}
\newcommand{\bu}{\ensuremath{\mathbf{u}}}
\newcommand{\vv}{\boldsymbol{v}}
\newcommand{\bv}{\ensuremath{\mathbf{v}}}
\newcommand{\uu}{\ensuremath{\mathbf{u}}}
\newcommand{\bw}{\ensuremath{\mathbf{w}}}
\newcommand{\brho}{\ensuremath{\boldsymbol{\rho}}}
\newcommand{\bpsi}{\ensuremath{\boldsymbol{\psi}}}
\newcommand{\bxi}{\ensuremath{\boldsymbol{\xi}}}
\newcommand{\deltaz}{\delta\!z}
\newcommand{\DeltaTheta}{\boldsymbol{\delta}\!\TTheta}
\newcommand{\deltatau}{\boldsymbol{\delta}\!\btau}
\newcommand{\deltaq}{\boldsymbol{\delta}\!\tq}
\newcommand{\bdeltau}{\boldsymbol{\delta}\!\bu}
\newcommand{\traceRM}[1]{\mathrm{tr}_{#1}^{\mathrm{RM}}}
\newcommand{\traceDD}[1]{\mathrm{tr}_{#1}^{\mathrm{dDiv}}} %{-3/2,-1/2}}
\newcommand{\traceGG}[1]{\mathrm{tr}_{#1}^{\mathrm{Ggrad}}} %{-3/2,-1/2}}
\newcommand{\dDiv}{{\div\Div\!}}
\newcommand{\Hdiv}[1]{{H(\div\!,#1)}}
\newcommand{\HDiv}[1]{{\bH(\Div\!,#1)}}
\newcommand{\HDivz}[1]{{\bH_0(\Div\!,#1)}}
\newcommand{\HdDiv}[1]{{H(\dDiv,#1)}}
\newcommand{\HdDivz}[1]{{H_0(\dDiv,#1)}}
\newcommand{\U}[1]{U_{#1}}
\newcommand{\HRM}[1]{\ensuremath{\mathbf{H}^{\mathrm{RM}}_{#1}}}
\newcommand{\HH}{\ensuremath{\mathbf{H}}}
\newcommand{\bH}{\ensuremath{\mathbf{H}}}
\newcommand{\DDs}{\ensuremath{{\mathbb{D}}^s}}
\newcommand{\bD}{\ensuremath{\boldsymbol{\mathcal{D}}}}
\newcommand{\cD}{\ensuremath{\mathcal{D}}}
\newcommand{\trggrad}[1]{{\mathrm{Ggrad},#1}}
\newcommand{\trddiv}[1]{{\mathrm{dDiv},#1}}
\newcommand{\trRM}[1]{{\mathrm{RM},#1}}
\def\div{{\rm div\,}}
\newcommand{\ttt}{{\rm T}}
\newcommand{\di}{d}
\newcommand{\R}{\ensuremath{\mathbb{R}}}
\newcommand{\nn}{\ensuremath{\mathbf{n}}}
\newcommand{\cC}{\ensuremath{\mathcal{C}}}
\newcommand{\cCinv}{\ensuremath{\mathcal{C}^{-1}}}
\newcommand{\cT}{\ensuremath{\mathcal{T}}}
\newcommand{\cO}{\ensuremath{\mathcal{O}}}
\newcommand{\cS}{\ensuremath{\mathcal{S}}}
\newcommand{\btau}{{\boldsymbol\tau}}
\title{An ultraweak formulation of the Reissner--Mindlin plate bending model and DPG approximation
\thanks{Supported by CONICYT through FONDECYT projects 1190009, 11170050, and by NSF through grant DMS-1818867}}
\author{
Thomas~F\"uhrer$^\dagger$
\and
Norbert Heuer\thanks{
Facultad de Matem\'aticas, Pontificia Universidad Cat\'olica de Chile,
Avenida Vicu\~na Mackenna 4860, Santiago, Chile,
email: {\tt \{tofuhrer,nheuer\}@mat.uc.cl}}
\and
Francisco-Javier Sayas\thanks{
Department of Mathematical Sciences, University of Delaware, Newark DE 19716}}
\begin{document}
\date{Dedicated to our dear friend Francisco ``Pancho'' Javier Sayas\\ who passed away
in April 2019.}
\maketitle
\begin{abstract}

We develop and analyze an ultraweak variational formulation of the Reissner--Mindlin
plate bending model both for the clamped and the soft simply supported cases.
We prove well-posedness of the formulation, uniformly with respect to the plate thickness $t$.
We also prove weak convergence of the Reissner--Mindlin solution to the solution
of the corresponding Kirchhoff--Love model when $t\to 0$.

Based on the ultraweak formulation, we introduce a discretization of the discontinuous
Petrov--Galerkin type with optimal test functions (DPG) and prove its uniform
quasi-optimal convergence. Our theory covers the case of non-convex polygonal plates.

A numerical experiment for some smooth model solutions with fixed load
confirms that our scheme is locking free.

\bigskip
\noindent
{\em Key words}: Reissner--Mindlin model, Kirchhoff--Love model,
clamped and simply supported plates,
fourth-order elliptic PDE, discontinuous Petrov--Galerkin method, optimal test functions

\noindent
{\em AMS Subject Classification}:
74S05, %  	Mechanics of deformable solids: Finite element methods
74K20, %  	Mechanics of deformable solids: Thin bodies, structures: Plates
35J35, %  	Variational methods for higher-order, elliptic equations
65N30, %  	Finite elements, Rayleigh-Ritz and Galerkin methods, finite methods
35J67  %  	Boundary values of solutions to elliptic PDE
\end{abstract}

%===================================================================================================
\section{Introduction}

We develop a uniformly well-posed ultraweak formulation of the Reissner--Mindlin plate bending
model and, based on this formulation, define a discontinuous Petrov--Galerkin method with
optimal test functions (DPG method) for its approximation. The objective of this work is to
continue to develop DPG techniques for plate bending models, without assuming unrealistic
regularity of solutions. The DPG framework has been proposed by Demkowicz and Gopalakrishnan
\cite{DemkowiczG_11_CDP} with the aim to automatically satisfy discrete inf-sup conditions of discretizations.
Without going into the details of advantages and challenges here, we consider this framework as
a means to give full flexibility in the design and selection of a variational formulation.
In other words, for a given problem, one can select any set of variables of interest.
The only challenge is to develop a well-posed formulation that gives access to these variables.
Then, a conforming discretization will be automatically quasi-optimal. Furthermore, it is robust
(constants do not depend on singular perturbation parameters) if the formulation is uniformly
well posed. This result assumes that one uses so-called optimal test functions,
see~\cite{DemkowiczG_11_CDP}, or approximated test functions of spaces for
which (uniformly bounded) Fortin operators exist, cf.~\cite{GopalakrishnanQ_14_APD}.

In this paper we focus on the continuous setting of the Reissner--Mindlin model.
In \cite{FuehrerHN_19_UFK} we considered clamped plates of the
Kirchhoff--Love model and afterwards, in \cite{FuehrerH_19_FDD}, provided a fully discrete analysis.
We also studied the pure deflection case \cite{FuehrerHH_TOB}, that is, the bi-Laplacian,
developing a thorough continuous analysis and giving initial results for its discretization.
In this paper, we extend the formulation and method for the clamped Kirchhoff plate from
\cite{FuehrerHN_19_UFK}.

It is well known that the Reissner--Mindlin model transforms
in a singularly perturbed way into the Kirchhoff--Love model when the plate thickness $t\to 0$.
For a plate with smooth boundary, Arnold and Falk have shown the strong convergence
of the Reissner--Mindlin deflection and rotation to the Kirchhoff--Love deflection and gradient
of the deflection when $t\to 0$. They proved convergence for different
boundary conditions and a whole scale of Sobolev norms, depending on the regularity of the solution.
Babu\v{s}ka and Pitk\"aranta \cite{BabuskaP_90_PPH} discuss the case of convex polygonal plates.
We do not know of any strong convergence result in cases of lowest regularity and non-smooth
boundary, specifically not for non-convex polygons.
In contrast, a justification of both models for small $t$ is a different subject, and has been
studied, e.g., by Arnold \emph{et al.} \cite{ArnoldMZ_02_RAR} and Braess \emph{et al.}
\cite{BraessSS_11_JPM}, the latter paper including the case of non-convex polygonal plates.

In \cite{FuehrerHN_19_UFK}, we presented a bending-moment formulation (unknowns are the vertical
deflection and the bending moment tensor). In order to extend this formulation we therefore
aim at a bending-moment based formulation of the Reissner--Mindlin model
(for clamped and soft simply-supported plates) that transforms into the Kirchhoff--Love
formulation when $t\to 0$. Specifically, the ultraweak formulation should be well posed uniformly in $t$
and the DPG approximation should be uniformly quasi-optimal (locking free).
This is exactly what we are going to achieve
at an abstract level, including the weak convergence of the Reissner--Mindlin solution to
the Kirchhoff--Love solution.
The construction of appropriate approximation spaces that guarantee this behavior for low-regular
cases (including non-convex polygonal plates) is an open problem, as is the construction of
related Fortin operators.
Furthermore, it is by no means obvious that our selection of variables (based on the objective
to extend our Kirchhoff--Love formulation) is the most convenient when it comes to constructing
approximation spaces. Considering alternative variables and formulations will be the subject of
future research.

As in \cite{FuehrerHN_19_UFK}, our focus is to develop a formulation that requires minimum
regularity, only subject to the $L_2$-regularity of the vertical load. This condition is owed
to the discontinuity of test functions of DPG schemes.
Ultraweak formulations are obtained by integrating by parts as often as necessary
to remove all derivatives from the unknown functions. This automatically generates trace operations,
and the involved traces have to be considered as independent unknowns. In other words, studying
ultraweak formulations based on minimal regularity is equivalent to studying related trace operations
and their well-posedness subject to minimal regularity requirements.
These trace operators and their jumps precisely characterize
conformity of the underlying spaces of minimum regularity and of their (conforming)
approximations. Therefore, this part of our analysis is relevant independently of the DPG scheme
we propose. Here, we consider domains with Lipschitz boundary (thus, including polygonal non-convex cases)
and notice that our analysis applies to two and three dimensions.

%By design, DPG schemes are automatically stable when using exact optimal test functions,
%cf.~Demkowicz and Gopalakrishnan \cite{DemkowiczG_11_CDP}. Our convergence results
%assume this exactness. In that case, no discrete analysis is required and the
%well-posedness of the discrete scheme and its uniform (in $t$) quasi-optimal convergence follow
%from the uniform well-posedness of the variational formulation.
%We note that the effect of approximating optimal test functions is controlled by the existence
%of an appropriate Fortin operator, cf.~the initial results by Gopalakrishnan and Qiu in
%\cite{GopalakrishnanQ_14_APD}, see also \cite{NagarajPD_17_CDF}.
%For the Kirchhoff--Love model, such a fully discrete analysis is given in \cite{FuehrerH_19_FDD}.

It goes without saying that the Reissner--Mindlin model is relevant in structural mechanics until today.
Correspondingly, there is vast literature both in mathematics and engineering sciences,
and we do not intend to discuss it to any length here. A key point in the numerical analysis has been the
locking effect that causes some numerical schemes to behave badly when $t$ becomes small.
Our scheme, being well behaved uniformly in $t$, is locking free (when using optimal test functions)
with respect to the variables of interest,
like several other known schemes. For instance, to give some mathematical references,
Stenberg and co-authors have derived locking-free schemes, e.g. \cite{ChapelleS_98_OLO},
Beir\~{a}o da Veiga \emph{et al.} \cite{BeiraoDaVeigaMR_13_NAL}
present a locking-free mixed scheme that includes the bending moment as an unknown.
In \cite{BoesingC_15_WOP}, B\"osing and Carstensen prove that a (weakly over-penalized)
discontinuous Bubnov--Galerkin method approximating the deflection and rotation variables is locking free.
We also note that there are two contributions on the DPG method for thin body problems.
Niemi \emph{et al.} \cite{NiemiBD_11_DPG} obtained a robust DPG approximation for a particular
Timoshenko beam problem, and Calo \emph{et al.} \cite{CaloCN_14_ADP} propose and analyze
a DPG scheme for the Reissner--Mindlin model, though ignoring the dependence of estimates on $t$.

An overview of the remainder of this paper is as follows.
In the next section we introduce and discuss our model problem, and make an initial step towards
a variational formulation. Section~\ref{sec_traces_jumps} is devoted to the spaces,
norms, and trace operations that are needed to formulate a well-posed ultraweak formulation.
Initially, the case $t>0$ is considered. The Kirchhoff--Love case $t=0$ is analyzed
in \S\ref{sec_jumps_KLove}. There, we recall some spaces, trace operators and
results from \cite{FuehrerHN_19_UFK}. Furthermore, we derive additional results needed
for the case of a simply supported plate, not considered in \cite{FuehrerHN_19_UFK}.
In Section~\ref{sec_uw} we then finish to develop the ultraweak formulation,
state its uniform well-posedness and weak convergence when $t\to 0$
(Theorems~\ref{thm_stab} and~\ref{thm_limit}, respectively),
define the DPG scheme, and state its robust convergence (Theorem~\ref{thm_DPG}).
Proofs of the theorems are given in Section~\ref{sec_adj}.
Finally, in Section~\ref{sec_num} we present some numerical results for the case
of some smooth solutions with fixed load and different values of $t$.

Throughout the paper, $a\lesssim b$ means that $a\le cb$ with a generic constant $c>0$ that is independent of
the plate thickness $t$ and the underlying mesh. Similarly, we use the notation $a\gtrsim b$.

%===================================================================================================
\section{Model problem} \label{sec_model}

Let $\Omega\subset\R^\di$ (initially, $\di=2$) be a bounded simply connected Lipschitz domain
with boundary $\Gamma=\partial\Omega$. We are considering a model problem that, in two dimensions,
is the Reissner--Mindlin plate bending model with linearly elastic, homogeneous and isotropic material,
described by the relations
\begin{subequations} \label{RM1}
\begin{align}
   \label{RM1a}
   \bq &= \kappa Gt(\grad u-\bpsi),\\
   \label{RM1b}
   \MM &= -Dt^3[\nu\mathrm{tr}(\Grad\bpsi)\II+(1-\nu)\Grad(\bpsi)]
\end{align}
\end{subequations}
and the equilibrium equations
\begin{subequations} \label{RM2}
\begin{align}
   \label{RM2a}
   -\div\bq&=f,\\
   \label{RM2b}
   \bq&=\Div\MM
\end{align}
\end{subequations}
on $\Omega$. Here, $\Omega$ is the mid-surface of the plate with thickness $t>0$, $f$ the transversal
bending load, $u$ the transverse deflection, $\bpsi$ the rotation vector, $\bq$ the shear force vector,
$\MM$ the bending moment tensor, $\II$ the identity tensor,
and $\Grad$ the symmetric gradient, $\Grad\bpsi:=\frac 12(\grad\bpsi+(\grad\bpsi)^\transp)$.
Furthermore, $\nu\in (-1,1/2]$ is the Poisson ratio, $\kappa>0$ the shear correction factor, and
\[
   G=\frac E{2(1+\nu)},\quad D=\frac E{12(1-\nu^2)}
\]
with the Young modulus $E>0$. The operator $\div$ is the standard divergence, and
$\Div$ is the row-wise divergence when writing second-order tensors as $\di\times\di$ matrix functions.

Relation \eqref{RM1b} between $\MM$ and $\bpsi$ can be written like
\begin{equation} \label{RM1bb}
   \MM=-t^3\cC\Grad\bpsi
\end{equation}
with positive definite tensor $\cC$ that is independent of $t$.
We will consider a formulation depending on the two variables
$\MM$ and $u$. It is obtained by replacing $\bq$ in \eqref{RM2a} and \eqref{RM1a} through \eqref{RM2b},
and replacing $\bpsi$ in \eqref{RM1bb} through relation \eqref{RM1a} after elimination of $\bq$.
This yields the system
\begin{align*}
   -\div\Div\MM = f,\quad \MM = -t^3\cC\Grad(\grad u - \frac 1{\kappa G t}\Div\MM).
\end{align*}
The dependence of the problem on $\kappa G$ is not critical, for ease of presentation we select $\kappa G=1$.
Then, rescaling $f\to t^3 f$ and  $\MM\to t^3\MM$ we obtain
\begin{align*}
   -\div\Div\MM = f,\quad \MM = -\cC\Grad(\grad u - t^2\Div\MM).
\end{align*}
Considering a clamped plate, the boundary conditions are $u=0$ and $\bpsi=0$ on $\Gamma$,
the latter being transformed into $\grad u-t^2\Div\MM=0$ on $\Gamma$.
We also consider a (soft) simply supported plate, represented by $u=0$ and $\MM\nn=0$ on $\Gamma$.

To conclude, selecting $f\in L_2(\Omega)$ and, for ease of presentation, $t\in(0,1]$,
a strong form of our model problem is
\begin{subequations} \label{prob_tmp}
\begin{alignat}{2}
     -\div\Div\MM           &= f  && \quad\text{in} \quad \Omega\label{p1_tmp},\\
    \MM + \cC\Grad(\grad u - t^2\Div\MM) &= 0  && \quad\text{in} \quad \Omega\label{p2_tmp}\\
    \text{with}\quad
    u = 0,\quad \grad u-t^2\Div\MM &= 0 &&\quad\text{on}\quad\Gamma\label{BCc_tmp}\\
    \text{or}\qquad\qquad\qquad
    u = 0,\quad \MM\nn &= 0 &&\quad\text{on}\quad\Gamma.\label{BCs_tmp}
\end{alignat}
\end{subequations}
Furthermore, from now on, $\Omega$ is a bounded simply connected Lipschitz domain in $\R^d$ ($d\in\{2,3\}$).
We note that, setting $t=0$, this problem with boundary condition \eqref{BCc_tmp}
is the Kirchhoff--Love plate bending model in the form being studied in \cite{FuehrerHN_19_UFK}
(there, we also permitted $d\in\{2,3\}$).
Our aim is to develop for both boundary conditions uniformly well-posed ultraweak variational formulations
of \eqref{prob_tmp}, and uniformly quasi-optimal DPG schemes for bounded, non-negative
plate thickness including the Kirchhoff--Love case.

Now, in order to have a well-posed problem one has to select appropriate spaces.
Before starting to discuss their selection, let us introduce some notation.
Let $\cO\subset\Omega$ be a sub-domain.
$L_2$-spaces for scalar, vector and tensor-valued functions on $\cO$ are denoted by
$L_2(\cO)$, $\bL_2(\cO)$ and $\LL_2(\cO)$, respectively. Their $L_2$-norms are
$\|\cdot\|_\cO$, generically for the three cases. Also, we drop the index $\cO$ of the norm when $\cO=\Omega$.
The notation $\LL_2^s(\cO)$ refers to the subspace of symmetric $L_2$-tensors.
The spaces $H^1(\cO)$ and $\bH^1(\cO)$ are the standard
$H^1$-spaces of scalar and vector-valued functions with respective subspaces
$H^1_0(\cO)$ and $\bH^1_0(\cO)$ of vanishing traces on $\partial\cO$.
We also need $\Hdiv{\cO}$, denoting $\bL_2(\cO)$-elements whose divergence are elements of $L_2(\Omega)$.
Correspondingly, $\HDiv{\cO}$ consists of $\LL_2^s(\cO)$-tensors $\TTheta$ with $\Div\TTheta\in\bL_2(\cO)$,
and $\HDivz{\cO}\subset\HDiv{\cO}$ is the subspace of tensors with zero normal trace on $\partial\cO$.
Central to the analysis of the Kirchhoff--Love model \cite{FuehrerHN_19_UFK,FuehrerH_19_FDD}
is the space $\HdDiv{\cO}$. It consists of the completion of $\DDs(\bar\cO)$ (smooth
symmetric tensors with support in $\bar\cO$) with respect to the norm
\[
   \|\cdot\|_{\dDiv,\cO} := \bigl(\|\cdot\|_\cO^2 + \|\div\Div\cdot\|_\cO^2\bigr)^{1/2}.
\]
For the Kirchhoff--Love case we also need the standard spaces of scalar functions
$H^2(\cO)$ and $H^2_0(\cO)$ with norm
$\|\cdot\|_{2,\cO}:=\bigl(\|\cdot\|_\cO^2 + \|\Grad\grad\cdot\|_\cO^2\bigr)^{1/2}$.
As before, we drop the index $\cO$ when $\cO=\Omega$.

Now, returning to the discussion of \eqref{prob_tmp}, by \eqref{p1_tmp} it holds $\MM\in\HdDiv{\Omega}$.
The deflection variable $u$ will be taken in $H^1_0(\Omega)$, and the eliminated rotation variable
suggests $\bpsi=\grad u-t^2\Div\MM\in\bH^1_0(\Omega)$ (for the clamped plate) or
$\grad u-t^2\Div\MM\in\bH^1(\Omega)$ (for the simple support).
It turns out that the regularity $u\in H^1(\Omega)$ has to be added as a constraint to \eqref{prob_tmp},
it cannot be deduced from the equations.
To derive our ultraweak variational formulation it is paramount to incorporate this constraint into the
PDE system. We do this by introducing the additional variable $\btheta:=\grad u$.
Furthermore, since, in particular, $\div(\btheta-\grad u)=0$, we can incorporate this relation
into equation \eqref{p1_tmp} to obtain a skew self-adjoint problem, in the following sense.
Ultraweak formulations give rise to independent trace variables and, redundantly incorporating the relation
$\div(\btheta-\grad u)=0$ in \eqref{p1_tmp}, the corresponding trace operator is defined
by a skew symmetric bilinear form. This will simplify our notation and analysis.

Our reformulated strong form of the model problem is
\begin{subequations} \label{prob}
\begin{alignat}{2}
     -\div(\Div\MM + t(\btheta-\grad u))  &= f  && \quad\text{in} \quad \Omega\label{p1},\\
    \MM + \cC\Grad(\grad u - t^2\Div\MM) &= 0  && \quad\text{in} \quad \Omega\label{p2},\\
    \btheta-\grad u                     &=0   && \quad\text{in} \quad \Omega\label{p3},\\
    u = 0,\quad \grad u-t^2\Div\MM &= 0 &&\quad\text{on}\quad\Gamma\quad\text{or}\label{pBCc}\\
    u = 0,\quad \MM\nn &= 0 &&\quad\text{on}\quad\Gamma.\label{pBCs}
\end{alignat}
\end{subequations}
Note that, setting $t=0$, \eqref{p1}--\eqref{pBCc} turns into the Kirchhoff--Love plate bending model
whose ultraweak setting was proposed and analyzed in \cite{FuehrerHN_19_UFK}.
Though, setting $t=0$ in our variational formulation (to be developed),
we recover the Kirchhoff--Love model from \cite{FuehrerHN_19_UFK}
without $\btheta=\grad u$ as independent variable. This is due to the fact that the appropriate
weighting of \eqref{p3} is by the factor $t$, just like in \eqref{p1}.

We now start to develop an ultraweak formulation of \eqref{prob}.
Although the physically relevant case is $\di=2$ we present our analysis for $\di\in\{2,3\}$.
In order to use a DPG discretization we invoke product test spaces. These product spaces are induced
by a (family of) mesh(es) $\cT$ consisting of general non-intersecting Lipschitz elements $\{T\}$
so that $\bar\Omega=\cup\{\bar T;\; T\in\cT\}$. We also formally denote the mesh skeleton
by $\cS=\{\partial T;\;T\in\cT\}$.
Considering test functions $z\in L_2(\Omega)$, $\TTheta\in\LL_2^s(\Omega)$ (symmetric $L_2$-tensors),
and $\btau\in\bL_2(\Omega)$ ($L_2$-vector functions),
which are sufficiently smooth on every $T\in\cT$, and testing \eqref{p1} by $-z$,
\eqref{p2} by $\cCinv\TTheta$, \eqref{p3} by $t\btau$, and integrating by parts, we obtain the relation
\begin{align} \label{VFa}
   &\vdual{u}{\div(\Div\TTheta+t(\btau-\grad z))}_\cT
   +\vdual{\MM}{\cCinv\TTheta+\Grad(\grad z-t^2\Div\TTheta)}_\cT
   +t\vdual{\btheta}{\btau-\grad z}_\cT
   \nonumber\\
   &-\sum_{T\in\cT} \dual{u}{\nn\cdot(\Div\TTheta+t(\btau-\grad z))}_{\partial T}
    +\sum_{T\in\cT} \dual{\nn\cdot(\Div\MM+t(\btheta-\grad u))}{z}_{\partial T}
   \nonumber\\
   &-\sum_{T\in\cT} \dual{\MM\nn}{\grad z-t^2\Div\TTheta}_{\partial T}
    +\sum_{T\in\cT} \dual{\grad u-t^2\Div\MM}{\TTheta\nn}_{\partial T}
   =
   -\vdual{f}{z}.
\end{align}
Here, $\vdual{\cdot}{\cdot}$ denotes the $L_2$-inner product on $\Omega$ (generically for
scalar, vector-, and tensor-valued functions) and the index $\cT$ means that differential
operators are taken piecewise with respect to $\cT$.
In the following we will use the index notation also to indicate piecewise differential operators,
e.g., $\vdual{\pwgrad u}{\btau}:=\vdual{\grad u}{\btau}_\cT$.
Furthermore, $\nn$ generically denotes the unit normal vector on $\partial T$ (for $T\in\cT$)
and $\Gamma$, pointing outside $T$ and $\Omega$, respectively.
The notation $\dual{\cdot}{\cdot}_{\omega}$, and later $\dual{\cdot}{\cdot}_\Gamma$,
indicate dualities on $\omega\subset\partial T$ and $\Gamma$, respectively, with $L_2$-pivot space.

At this point, the skeleton terms in \eqref{VFa} are well defined only for sufficiently smooth solution
and test functions. Before formulating our final variational formulation we need to
define these terms for appropriate spaces and analyze their behavior. This will be done in
the following section, before returning to the model problem in Section~\ref{sec_uw}.

%===================================================================================================
\section{Trace spaces and norms} \label{sec_traces_jumps}

Initially we consider the case of positive plate thickness, for convenience $t\in (0,1]$.
At the end of this section, in \S\ref{sec_jumps_KLove}, we will address the Kirchhoff--Love case $t=0$.

We start by defining local and global test and trace spaces.
For any $T\in\cT$ we consider the space $V(T,t)\subset H^1(T)\times\LL_2^s(T)\times \bL_2(T)$
which is the completion of $\cD(\bar T)\times\DDs(\bar T)\times\bD(\bar T)$ with respect to the norm
\begin{align} \label{norm_VT}
   &\|(z,\TTheta,\btau)\|_{V(T,t)}:=
   \nonumber\\
   &\quad
   \Bigl(\|z\|_T^2 + t\|\grad z\|_T^2 + \|\TTheta\|_T^2 + t\|\btau\|_T^2
      + \|\Grad(\grad z-t^2\Div\TTheta)\|_T^2 + \|\div(\Div\TTheta+t(\btau-\grad z))\|_T^2\Bigr)^{1/2}
\end{align}
with corresponding inner product $\ip{\cdot}{\cdot}_{V(T,t)}$.
Here, $\cD(\bar T)$, $\bD(\bar T)$ and $\DDs(\bar T)$ refer to the spaces of smooth
scalar, vector and symmetric tensor functions on $\bar T$, respectively.

The spaces $V(T,t)$ induce a product space $V(\cT,t)$ with respective norm and inner product denoted by
$\|\cdot\|_{V(\cT,t)}$ and $\ip{\cdot}{\cdot}_{V(\cT,t)}$. Introducing the norm
\begin{align*} %\label{U_norm}
   &\|(z,\TTheta,\btau)\|_{U(t)}:=
   \nonumber\\
   &\quad
   \Bigl(\|z\|^2 + t\|\grad z\|^2 + \|\TTheta\|^2 + t\|\btau\|^2
      + \|\Grad(\grad z-t^2\Div\TTheta)\|^2 + \|\div(\Div\TTheta+t(\btau-\grad z))\|^2\Bigr)^{1/2},
\end{align*}
we define the global space $U(t)$ as the completion of
$\cD(\bar\Omega)\times\DDs(\bar\Omega)\times\bD(\bar\Omega)$ with respect to $\|\cdot\|_{U(t)}$,
and $\U{c}(t)$ (clamped plate) and $\U{s}(t)$ (simple support) as the subspaces of
functions $(z,\TTheta,\btau)\in U(t)$ such that, respectively,
\begin{equation} \label{Uc_BC}
   z=0,\quad \grad z-t^2\Div\TTheta=0\quad\text{on}\ \Gamma
\end{equation}
and
\begin{equation} \label{Us_BC}
   z=0,\quad \TTheta\nn=0\quad\text{on}\ \Gamma.
\end{equation}
Of course, $\|(z,\TTheta,\btau)\|_{U(t)}=\|(z,\TTheta,\btau)\|_{V(\cT,t)}$ for
$(z,\TTheta,\btau)\in U(t)$. (We prefer the notation $U(t)$ instead of $V(t)$ since this space also
characterizes the solution of our problem where we generally use the letter $U$ in variational formulations.)

\begin{lemma} \label{la_reg}
Let $t>0$. If $(z,\TTheta,\btau)\in V(\cT,t)$ then
\begin{align*}
   &z\in H^1(\cT),\ \TTheta\in\HDiv{\cT},\ \pwgrad z-t^2\pwDiv\TTheta\in\bH^1(\cT),\
   \pwDiv\TTheta+t(\btau-\pwgrad z)\in\Hdiv{\cT}
\end{align*}
and, if $(z,\TTheta,\btau)\in \U{c}(t)$ or $(z,\TTheta,\btau)\in \U{s}(t)$ then, respectively,
\begin{align*}
   &z\in H^1_0(\Omega),\ \TTheta\in\HDiv{\Omega},\ \grad z-t^2\Div\TTheta\in\bH^1_0(\Omega),\
   \Div\TTheta+t(\btau-\grad z)\in\Hdiv{\Omega},
\end{align*}
or
\begin{align*}
   &z\in H^1_0(\Omega),\ \TTheta\in\HDivz{\Omega},\ \grad z-t^2\Div\TTheta\in\bH^1(\Omega),\
   \Div\TTheta+t(\btau-\grad z)\in\Hdiv{\Omega}.
\end{align*}
\end{lemma}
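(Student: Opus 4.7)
The plan is to identify the claimed regularity by following a generic Cauchy sequence $\{(z_n,\TTheta_n,\btau_n)\}\subset \cD(\bar T)\times\DDs(\bar T)\times\bD(\bar T)$ through to its limit in the completion $V(T,t)$. Since $t>0$ is fixed, the first four summands in the norm immediately yield $z_n\to z$ in $H^1(T)$, $\TTheta_n\to\TTheta$ in $\LL_2^s(T)$ and $\btau_n\to\btau$ in $\bL_2(T)$. In particular $\Div\TTheta_n\to\Div\TTheta$ in $\cD'(T)$, so the smooth expressions $\bphi_n:=\grad z_n-t^2\Div\TTheta_n$ and $\rho_n:=\Div\TTheta_n+t(\btau_n-\grad z_n)$ converge distributionally to $\bphi:=\grad z-t^2\Div\TTheta$ and $\rho:=\Div\TTheta+t(\btau-\grad z)$, and the two remaining norm contributions force $\Grad\bphi\in\LL_2^s(T)$ and $\div\rho\in L_2(T)$ by distributional continuity.

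The heart of the proof is to upgrade $\bphi$ from a distribution with symmetric gradient in $L_2$ to a bona fide $\bH^1(T)$-function; $\rho\in\bL_2(T)$ and $\TTheta\in\HDiv{T}$ will then follow almost for free. I plan to invoke a Korn-quotient argument: let $\mathrm{RM}:=\ker\Grad\subset\bH^1(T)$ denote the finite-dimensional space of rigid motions on $T$ and $P$ its $L_2$-orthogonal projector. Setting $\bphi_n^0:=\bphi_n-P\bphi_n$, Korn's second inequality yields $\|\bphi_n^0-\bphi_m^0\|_{\bH^1(T)}\lesssim \|\Grad(\bphi_n-\bphi_m)\|_T$, so $\{\bphi_n^0\}$ is Cauchy in $\bH^1(T)$ with some limit $\bphi^*\in\bH^1(T)$. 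Since $\bphi_n$ also converges in $\cD'(T)$, the finite-dimensional sequence $\{P\bphi_n\}\subset\mathrm{RM}$ must converge to some $r\in\mathrm{RM}$, and hence $\bphi=\bphi^*+r\in\bH^1(T)$. The identity $\Div\TTheta=t^{-2}(\grad z-\bphi)$ now places $\Div\TTheta$ in $\bL_2(T)$, giving $\TTheta\in\HDiv{T}$, and consequently $\rho\in\bL_2(T)$; combined with $\div\rho\in L_2(T)$ this yields $\rho\in\Hdiv{T}$.

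The assertion for $U(t)$ follows by running exactly the same argument with $T$ replaced by $\Omega$, using $\|\cdot\|_{U(t)}=\|\cdot\|_{V(\cT,t)}$. The boundary conditions defining $\U{c}(t)$ and $\U{s}(t)$ then transfer directly via standard trace theorems: $z\in H^1(\Omega)$ with $z=0$ on $\Gamma$ gives $z\in H^1_0(\Omega)$; for the clamped case, $\bphi\in\bH^1(\Omega)$ with $\bphi=0$ on $\Gamma$ gives $\bphi\in\bH^1_0(\Omega)$; for the simply supported case, $\TTheta\in\HDiv{\Omega}$ with $\TTheta\nn=0$ on $\Gamma$ (interpreted in $\bH^{-1/2}(\Gamma)$) gives $\TTheta\in\HDivz{\Omega}$. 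In both cases the previous reasoning produces $\rho\in\Hdiv{\Omega}$.

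The main obstacle is exactly the Korn-plus-rigid-motions step. A direct bound of the form $\|\Div\TTheta_n\|_T\lesssim\|(z_n,\TTheta_n,\btau_n)\|_{V(T,t)}$ is not available, because the norm controls $\Grad\bphi_n$ but not $\bphi_n$ itself, so Korn's inequality on its own recovers $\bphi_n$ only modulo rigid motions. What rescues the argument is that the smooth quantity $\bphi_n$ also converges distributionally — a consequence of $z_n\to z$ in $H^1$ and $\TTheta_n\to\TTheta$ in $\LL_2^s$ — which uniquely fixes the rigid-motion component inside the finite-dimensional space $\mathrm{RM}$ and thereby places $\bphi$ in $\bH^1(T)$.
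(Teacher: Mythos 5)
Your proof is correct and follows essentially the same route as the paper's (very brief) argument: the regularity of each component is read off from the approximating sequence in the completion, and $\TTheta\in\HDiv{T}$ is obtained from $\Div\TTheta=t^{-2}\bigl(\grad z-(\grad z-t^2\Div\TTheta)\bigr)$ once $\grad z-t^2\Div\TTheta\in\bH^1(T)$ is known. The only difference is that you make explicit, via Korn's second inequality modulo rigid motions together with the finite dimensionality of $\ker\Grad$ and distributional convergence, the step the paper dismisses as straightforward, namely that the completion really places $\grad z-t^2\Div\TTheta$ in $\bH^1(T)$ and not merely its symmetric gradient in $\LL_2^s(T)$.
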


\begin{proof}
The stated regularities are straightforward to deduce. Note, e.g. in the case $(z,\TTheta,\btau)\in V(\cT,t)$,
that $\TTheta\in\HDiv{\cT}$ since $\grad z|_T\in \bL_2(T)$ and $(\grad z-t^2\Div\TTheta)|_T\in\bH^1(T)$
for any $T\in\cT$.
\end{proof}

%===================================================================================================
\subsection{Traces} \label{sec_traces}

For $T\in\cT$, we introduce a linear operator $\traceRM{T,t}:\;V(T,t)\to (V(T,t))'$ by
\begin{align} \label{trT_def}
   \dualRM{\traceRM{T,t}(z,\TTheta,\btau)}{(\deltaz,\DeltaTheta,\deltatau)}{\partial T,t} :=\qquad&
   \nonumber\\
   \vdual{z}{\div(\Div\DeltaTheta+t(\deltatau-\grad\deltaz))}_T
   &-\vdual{\div(\Div\TTheta+t(\btau-\grad z))}{\deltaz}_T
   \nonumber\\
   +\vdual{\TTheta}{\Grad(\grad\deltaz-t^2\Div\DeltaTheta)}_T
   &-\vdual{\Grad(\grad z-t^2\Div\TTheta)}{\DeltaTheta}_T
   \nonumber\\
   -t\vdual{\btau}{\grad\deltaz}_T &+ t\vdual{\grad z}{\deltatau}_T
\end{align}
(note the additional parameter $t$ in the duality notation $\dual{\cdot}{\cdot}_{\partial T,t}$).
The range of this operator is denoted by
\[
   \HRM{}(\partial T,t) := \traceRM{T,t}(V(T,t)),\quad T\in\cT.
\]
It is easy to see that this trace operator is supported on the boundary of $T$.
Specifically, we have the following result.

\begin{lemma} \label{la_trT}
Let $t>0$. For $T\in\cT$ the trace operator $\traceRM{T,t}$ satisfies the relations
\begin{equation} \label{trT_asym}
   \dualRM{\traceRM{T,t}(z,\TTheta,\btau)}{(\deltaz,\DeltaTheta,\deltatau)}{\partial T,t}
   =
   -\dualRM{\traceRM{T,t}(\deltaz,\DeltaTheta,\deltatau)}{(z,\TTheta,\btau)}{\partial T,t}
\end{equation}
and
\begin{align} \label{trT_rep}
   \dualRM{\traceRM{T,t}(z,\TTheta,\btau)}{(\deltaz,\DeltaTheta,\deltatau)}{\partial T,t}
   =&
   \nonumber\\
   \dual{z}{\nn\cdot(\Div\DeltaTheta+t(\deltatau-\grad\deltaz))}_{\partial T}
   &-\dual{\nn\cdot(\Div\TTheta+t(\btau-\grad z))}{\deltaz}_{\partial T}
   \nonumber\\
   +\dual{\TTheta\nn}{\grad\deltaz-t^2\Div\DeltaTheta}_{\partial T}
   &-\dual{\grad z-t^2\Div\TTheta}{\DeltaTheta\nn}_{\partial T}
\end{align}
for any $(z,\TTheta,\btau), (\deltaz,\DeltaTheta,\deltatau)\in V(T,t)$.
\end{lemma}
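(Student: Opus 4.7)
The antisymmetry identity \eqref{trT_asym} is essentially built into the definition \eqref{trT_def}: the right-hand side there is assembled from pairs of the form $\vdual{a}{Lb}_T-\vdual{La}{b}_T$ with $L$ a linear differential operator, together with the single pair $-t\vdual{\btau}{\grad\deltaz}_T+t\vdual{\grad z}{\deltatau}_T$. Each such pair changes sign under swapping the triples $(z,\TTheta,\btau)\leftrightarrow(\deltaz,\DeltaTheta,\deltatau)$, so \eqref{trT_asym} follows term by term without computation.

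For the boundary representation \eqref{trT_rep} I would first prove the identity on the dense smooth subspace $\cD(\bar T)\times\DDs(\bar T)\times\bD(\bar T)\subset V(T,t)$ and then extend by density, invoking Lemma~\ref{la_reg} to make sense of the boundary dualities on the right-hand side. On smooth triples I would apply Green's identities to each volume pairing in \eqref{trT_def}: the $\div$-pairings via $\vdual{w}{\div\bv}_T=-\vdual{\grad w}{\bv}_T+\dual{w}{\bv\cdot\nn}_{\partial T}$, and the $\Grad$-pairings via $\vdual{\TTheta}{\Grad\bv}_T=-\vdual{\Div\TTheta}{\bv}_T+\dual{\TTheta\nn}{\bv}_{\partial T}$, the latter exploiting the symmetry of $\TTheta$ to rewrite $\TTheta:\Grad\bv$ as $\TTheta:\grad\bv$. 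Each of these four integrations by parts produces exactly one of the four boundary dualities stated on the right-hand side of \eqref{trT_rep}.

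The real work, and the step where I expect a sign or coefficient slip to be most likely, is the bookkeeping of the remaining volume residues. After the four integrations by parts together with the undifferentiated pair $-t\vdual{\btau}{\grad\deltaz}_T+t\vdual{\grad z}{\deltatau}_T$, the volume part is a combination of $\vdual{\grad z}{\Div\DeltaTheta}_T$, $\vdual{\Div\TTheta}{\grad\deltaz}_T$, $t\vdual{\grad z}{\deltatau}_T$, $t\vdual{\btau}{\grad\deltaz}_T$, $t\vdual{\grad z}{\grad\deltaz}_T$ and $t^2\vdual{\Div\TTheta}{\Div\DeltaTheta}_T$; a direct inspection shows that each of these occurs twice with opposite signs, so the entire volume contribution cancels and only the claimed boundary terms remain. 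Finally, both sides of \eqref{trT_rep} are continuous in the $V(T,t)$-topology (the left by the very definition of $\traceRM{T,t}$; the right because Lemma~\ref{la_reg} places the two factors of each boundary pairing in a trace space of $\partial T$ and its continuous dual), so density extends the identity to arbitrary $(z,\TTheta,\btau),(\deltaz,\DeltaTheta,\deltatau)\in V(T,t)$.
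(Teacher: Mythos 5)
Your proposal is correct and follows essentially the same route as the paper: skew symmetry read off from the definition \eqref{trT_def}, and \eqref{trT_rep} obtained by integrating each pairing by parts (your volume-residue cancellation is exactly the computation behind the paper's ``follows by integration by parts''), with Lemma~\ref{la_reg} supplying the regularity that makes the four boundary dualities meaningful. The only cosmetic difference is that the paper applies the generalized Green identities directly to the composite quantities in $H^1(T)$, $\Hdiv{T}$, $\bH^1(T)$, $\HDiv{T}$ rather than arguing on smooth triples and passing to the limit, which is an immaterial variation of the same argument.
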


\begin{proof}
The skew symmetry \eqref{trT_asym} is clear by definition \eqref{trT_def}.
Relation \eqref{trT_rep} follows by integration by parts subject to the required regularity
of the individual components. Let us check the regularities of the left terms of each of the
pairs appearing in \eqref{trT_rep}. By the same arguments the corresponding right terms have the required
regularities. Using the regularity provided by Lemma~\ref{la_reg}, and $T\in\cT$:
\begin{enumerate}
\item The trace of $z$ on $\partial T$ is well defined as an element of
      $H^{1/2}(\partial T)$, the trace space of $H^1(T)$.
      The normal component of $\Div\DeltaTheta+t(\deltatau-\grad\deltaz)$ on $\partial T$
      is an element of the dual space of $H^{1/2}(\partial T)$ since
      $\Div\DeltaTheta+t(\deltatau-\grad\deltaz)\in\Hdiv{T}$ by Lemma~\ref{la_reg}.
\item The trace of $\grad\deltaz-t^2\Div\DeltaTheta$ on $\partial T$ is an element
      of $\bH^{1/2}(\partial T)$, the standard trace space of $\bH^1(T)$, since
      $\grad\deltaz-t^2\Div\DeltaTheta\in\bH^1(T)$ by Lemma~\ref{la_reg}.
      Since $\TTheta\in\HDiv{T}$, also by Lemma~\ref{la_reg},
      the normal component(s) $\TTheta\nn$ on $\partial T$ is an element of
      the dual space of $\bH^{1/2}(\partial T)$.
\end{enumerate}
\end{proof}

We also introduce the corresponding collective (global) trace operator,
\[
   \traceRM{\cT,t}:\;
   \left\{\begin{array}{cll}
      U(t) & \to & V(\cT,t)',\\
      (z,\TTheta,\btau) & \mapsto & \traceRM{\cT,t}(z,\TTheta,\btau)
                                    := (\traceRM{T,t}(z,\TTheta,\btau))_{T\in\cT}
   \end{array}\right.
\]
with duality
\begin{align} \label{tr_RM}
    \dualRM{\traceRM{\cT,t}(z,\TTheta,\btau)}{(\deltaz,\DeltaTheta,\deltatau)}{\cS,t}
    := \sum_{T\in\cT} \dualRM{\traceRM{T,t}(z,\TTheta,\btau)}{(\deltaz,\DeltaTheta,\deltatau)}{\partial T,t}
\end{align}
and range $\HRM{}(\cS,t):=\traceRM{\cT,t}(U(t))$.
Here, and in the following, considering dualities $\dual{\cdot}{\cdot}_{\partial T}$
and $\dualRM{\cdot}{\cdot}{\partial T,t}$ on the whole of
$\partial T$, possibly involved traces onto $\partial T$ are always taken from $T$ without further notice
and we tacitly restrict arguments to elements $T$ where needed.

To consider the different boundary conditions we
specify the following subspaces,
\begin{align*}
   \HRM{c}(\cS,t) := \traceRM{\cT,t}(\U{c}(t))
   \quad\text{and}\quad
   \HRM{s}(\cS,t) := \traceRM{\cT,t}(\U{s}(t)).
\end{align*}
Recalling representation \eqref{trT_rep} of the local trace operator we note that a corresponding
relation holds for the global trace operator acting on $U(t)$.
 
\begin{lemma} \label{la_tr}
Let $t>0$. The trace operator $\traceRM{\cT,t}$ satisfies the relation
\begin{align*}
   \dualRM{\traceRM{\cT,t}(z,\TTheta,\btau)}{(\deltaz,\DeltaTheta,\deltatau)}{\cS,t}
   =\qquad&
   \\
   \dual{z}{\nn\cdot(\Div\DeltaTheta+t(\deltatau-\grad\deltaz))}_\Gamma
   &-\dual{\nn\cdot(\Div\TTheta+t(\btau-\grad z))}{\deltaz}_\Gamma
   \\
   +\dual{\TTheta\nn}{\grad\deltaz-t^2\Div\DeltaTheta}_\Gamma
   &-\dual{\grad z-t^2\Div\TTheta}{\DeltaTheta\nn}_\Gamma
\end{align*}
for any $(z,\TTheta,\btau), (\deltaz,\DeltaTheta,\deltatau)\in U(t)$.
\end{lemma}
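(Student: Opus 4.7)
The plan is to establish the identity first for smooth triples by summing the local representation \eqref{trT_rep} and exploiting cancellation across internal faces, and then to extend to all of $U(t)\times U(t)$ by density and continuity.

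First I would use that, by definition, $\cD(\bar\Omega)\times\DDs(\bar\Omega)\times\bD(\bar\Omega)$ is dense in $U(t)$. It therefore suffices to prove (i) the identity for smooth triples and (ii) continuity of both sides of the claimed equation in the $U(t)$-topology in each argument. The left-hand side is continuous by construction of $\traceRM{\cT,t}$: each summand on the right of \eqref{tr_RM} is built from volume integrals controlled by the norm of $V(\cT,t)$, which coincides with $\|\cdot\|_{U(t)}$ on $U(t)$.

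For the right-hand side I would argue that completing globally smooth triples in $\|\cdot\|_{U(t)}$ yields, for every $(z,\TTheta,\btau)\in U(t)$, the global regularities $z\in H^1(\Omega)$, $\TTheta\in\HDiv{\Omega}$, $\grad z-t^2\Div\TTheta\in\bH^1(\Omega)$, and $\Div\TTheta+t(\btau-\grad z)\in\Hdiv{\Omega}$, by the same reasoning as in Lemma~\ref{la_reg} but applied globally (no jumps arise because the approximants are smooth on all of $\bar\Omega$). Each of the four pairings on $\Gamma$ in the statement is then a standard continuous duality between $H^{1/2}(\Gamma)$ and $H^{-1/2}(\Gamma)$ (or vector-valued analogues), with operator norms bounded by $\|\cdot\|_{U(t)}$. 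For smooth triples I would apply \eqref{trT_rep} on each $T\in\cT$ and sum: each of the four families of $\partial T$-integrals splits into a part on $\partial T\cap\Gamma$ and a part on internal faces. On any internal face shared by two elements the outward normals are opposite while every scalar, vector, or tensor field entering the pairings agrees from both sides by smoothness, so the internal contributions cancel pairwise and only the four $\Gamma$-dualities survive.

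The main technical point will be (ii): spelling out the correct trace space for each component of $U(t)$ and giving rigorous meaning to pairings such as $\dual{\TTheta\nn}{\grad\deltaz-t^2\Div\DeltaTheta}_\Gamma$ via the classical integration-by-parts formula extending the normal-trace operator on $\HDiv{\Omega}$ to $\bH^{-1/2}(\Gamma)$, and analogously for the three scalar normal-trace pairings. Once these four boundary mappings are identified as continuous bilinear forms on $U(t)\times U(t)$, step (i) and the density of smooth triples close the argument.
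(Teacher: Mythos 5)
Your argument is correct, but it follows a different route than the paper. The paper proves Lemma~\ref{la_tr} exactly as the local representation \eqref{trT_rep} in Lemma~\ref{la_trT}: using the (global analogue of the) regularity from Lemma~\ref{la_reg}, namely $z\in H^1(\Omega)$, $\TTheta\in\HDiv{\Omega}$, $\grad z-t^2\Div\TTheta\in\bH^1(\Omega)$, $\Div\TTheta+t(\btau-\grad z)\in\Hdiv{\Omega}$, it applies the distributional integration-by-parts formulas for the pairs $H^1/\Hdiv{\Omega}$ and $\bH^1/\HDiv{\Omega}$ directly on $\Omega$, so the sum of the local volume expressions \eqref{trT_def} collapses at once to the four $\Gamma$-dualities; no density step and no face-wise bookkeeping are needed. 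You instead verify the identity for globally smooth triples (summing \eqref{trT_rep} and cancelling interior-face contributions, which is fine since each pairing is linear in $\nn$ and all fields are single-valued) and then pass to the limit using the density of $\cD(\bar\Omega)\times\DDs(\bar\Omega)\times\bD(\bar\Omega)$ in $U(t)$. What your route buys is that integration by parts only ever has to be justified for smooth fields; what it costs is that you must establish \emph{quantitative} continuity of the four $\Gamma$-pairings with respect to $\|\cdot\|_{U(t)}$, not merely their well-definedness. This is true for fixed $t>0$, but it is slightly more than ``the same reasoning as in Lemma~\ref{la_reg}'': the $U(t)$-norm controls neither $\|\Div\TTheta\|$ nor $\|\grad z-t^2\Div\TTheta\|$ directly, and one needs a Ne\v{c}as/Korn-type argument (bounding $\|\grad z-t^2\Div\TTheta\|$ through its symmetric gradient and its $\bH^{-1}(\Omega)$-norm, then recovering $\Div\TTheta$ from $t^{-2}(\grad z-(\grad z-t^2\Div\TTheta))$), or alternatively the closed graph theorem, to get bounded linear maps into $H^{1/2}(\Gamma)$, $\bH^{1/2}(\Gamma)$ and their duals; the resulting constants blow up as $t\to0$, which is harmless here since $t$ is fixed and the assertion is an identity, but your phrase ``operator norms bounded by $\|\cdot\|_{U(t)}$'' should not be read as a $t$-uniform bound. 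With that continuity point made precise, your proof is complete and fully equivalent in outcome to the paper's.
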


\begin{proof}
The proof of this statement is analogous to that of the local variant \eqref{trT_rep}.
\end{proof}

The local and global traces are measured in the \emph{minimum energy extension} norms,
\begin{align}
   \nonumber
   \|\tq\|_\trRM{\partial T,t}
   &:=
   \inf \Bigl\{\|(z,\TTheta,\btau)\|_{V(T,t)};\;
               (z,\TTheta,\btau)\in V(T,t),\ \traceRM{T,t}(z,\TTheta,\btau)=\tq\Bigr\},\\
   \label{norm_tr_global}
   \|\tq\|_\trRM{\cS,t}
   &:=
   \inf \Bigl\{\|(z,\TTheta,\btau)\|_{U(t)};\;
               (z,\TTheta,\btau)\in \U{}(t),\ \traceRM{\cT,t}(z,\TTheta,\btau)=\tq\Bigr\}.
\end{align}
For given $\tq\in\HRM{}(\partial T,t)$ and $(\deltaz,\DeltaTheta,\deltatau)\in V(T,t)$,
we define their duality pairing by
\[
   \dualRM{\tq}{(\deltaz,\DeltaTheta,\deltatau)}{\partial T,t}
   :=
   \dualRM{\traceRM{T,t}(z,\TTheta,\btau)}{(\deltaz,\DeltaTheta,\deltatau)}{\partial T,t}
\]
where $(z,\TTheta,\btau)\in V(T,t)$ is such that $\traceRM{T,t}(z,\TTheta,\btau) = \tq$, and
\begin{align} \label{tr_RM_dual}
   \dualRM{\tq}{(\deltaz,\DeltaTheta,\deltatau)}{\cS,t}
   :=
   \sum_{T\in\cT} \dualRM{\tq_T}{(\deltaz,\DeltaTheta,\deltatau)}{\partial T,t}
\end{align}
for $\tq=(\tq_T)_{T\in\cT}\in \HRM{}(\cS,t)$ and $(\deltaz,\DeltaTheta,\deltatau)\in V(\cT,t)$.
Using these dualities we define alternative norms in the trace spaces by
\begin{align*}
   \|\tq\|_{V(T,t)'}
   &:=
   \sup_{0\not=(z,\TTheta,\btau)\in V(T,t)}
   \frac{\dualRM{\tq}{(z,\TTheta,\btau)}{\partial T,t}}{\|(z,\TTheta,\btau)\|_{V(T,t)}},
   \quad \tq\in \HRM{}(\partial T,t),\ T\in\cT,\nonumber\\
   %\label{norm_tr_RM}
   \|\tq\|_{V(\cT,t)'}
   &:=
   \sup_{0\not=(z,\TTheta,\btau)\in V(\cT,t)}
   \frac{\dualRM{\tq}{(z,\TTheta,\btau)}{\cS,t}}{\|(z,\TTheta,\btau)\|_{V(\cT,t)}},
   \quad \tq\in \HRM{}(\cS,t).
\end{align*}

\begin{lemma} \label{la_trT_norms}
Let $t>0$. It holds the identity
\[ %\label{tr_RM_norms}
   \|\tq\|_{V(T,t)'} = \|\tq\|_\trRM{\partial T,t}\quad
   \forall \tq\in \HRM{}(\partial T,t),\ T\in \cT,
\]
so that
\[
   \traceRM{T,t}:\; V(T,t)\to \HRM{}(\partial T,t)
\]
has unit norm and $\HRM{}(\partial T,t)$ is closed.
\end{lemma}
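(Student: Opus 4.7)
The plan is to establish the norm identity $\|\tq\|_{V(T,t)'} = \|\tq\|_{\trRM{\partial T,t}}$; once this is in hand, the unit norm of $\traceRM{T,t}$ and the closedness of $\HRM{}(\partial T,t)$ follow from standard Hilbert space arguments.

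First I would dispose of the easy direction $\|\tq\|_{V(T,t)'} \le \|\tq\|_{\trRM{\partial T,t}}$ by termwise Cauchy--Schwarz in the definition \eqref{trT_def}. Each of the six $L_2$-inner products in $\dualRM{\traceRM{T,t}(z,\TTheta,\btau)}{(\deltaz,\DeltaTheta,\deltatau)}{\partial T,t}$ splits into a product of two factors that belong, exactly once each, to the collection of six squared quantities appearing in $\|\cdot\|_{V(T,t)}^2$ (with the weighting $t=\sqrt t\cdot\sqrt t$ distributing into the $\btau$- and $\grad z$-terms). A discrete Cauchy--Schwarz in $\R^6$ then yields $|\dualRM{\traceRM{T,t}(z,\TTheta,\btau)}{(\deltaz,\DeltaTheta,\deltatau)}{\partial T,t}| \le \|(z,\TTheta,\btau)\|_{V(T,t)} \|(\deltaz,\DeltaTheta,\deltatau)\|_{V(T,t)}$; dividing by the second factor, supremising over $(\deltaz,\DeltaTheta,\deltatau)$, and infimising over pre-images of $\tq$ delivers the bound.

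For the reverse inequality, I would use Riesz representation. Viewing $\tq$ as a bounded functional on $V(T,t)$ via the pairing $\dualRM{\cdot}{\cdot}{\partial T,t}$ (well defined because two pre-images of $\tq$ differ by an element of $\ker\traceRM{T,t}$), there exists a unique $r\in V(T,t)$ with $\ip{r}{y}_{V(T,t)} = \dualRM{\tq}{y}{\partial T,t}$ for every $y$ and with $\|r\|_{V(T,t)} = \|\tq\|_{V(T,t)'}$. If one can show that $r$ is itself a pre-image, i.e.\ $\traceRM{T,t}(r)=\tq$, the chain $\|\tq\|_{\trRM{\partial T,t}} \le \|r\|_{V(T,t)} = \|\tq\|_{V(T,t)'}$ closes the identity.

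The main technical obstacle is precisely this verification that $\traceRM{T,t}(r)=\tq$. The skew-symmetry \eqref{trT_asym} forces every element in the range of $\traceRM{T,t}$ to annihilate $\ker\traceRM{T,t}$, so the Riesz representative automatically lies in $(\ker\traceRM{T,t})^{\perp_{V(T,t)}}$. The remaining work is to show that the skew-adjoint operator $A\colon V(T,t)\to V(T,t)$ defined by $\ip{Ax}{y}_{V(T,t)}=\dualRM{\traceRM{T,t}(x)}{y}{\partial T,t}$ restricts to an isometry of $(\ker A)^\perp$. The very choice of the norm \eqref{norm_VT}, whose six squared summands match precisely the six $L_2$-inner products producing $b$, is what makes this isometric property hold and forces $r$ to coincide with the unique minimum-norm pre-image of $\tq$. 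Once this identification is secured, $\traceRM{T,t}\colon(\ker\traceRM{T,t})^\perp\to\HRM{}(\partial T,t)$ is an isometric bijection, so $\traceRM{T,t}\colon V(T,t)\to\HRM{}(\partial T,t)$ has unit operator norm (equality is attained on any nonzero element of $(\ker\traceRM{T,t})^\perp$) and $\HRM{}(\partial T,t)$ inherits completeness from the closed subspace $(\ker\traceRM{T,t})^\perp$ of $V(T,t)$.
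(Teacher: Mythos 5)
Your easy direction is fine and matches the paper, and your concluding deductions (unit norm, closedness) would follow once the norm identity is in hand. But the central step of your plan is false. By the skew symmetry \eqref{trT_asym}, $\dualRM{\traceRM{T,t}(x)}{x}{\partial T,t}=0$ for \emph{every} $x\in V(T,t)$. The Riesz representative $r$ of $\tq$ satisfies $\dualRM{\tq}{r}{\partial T,t}=\ip{r}{r}_{V(T,t)}=\|r\|_{V(T,t)}^2$, which is strictly positive for $\tq\neq 0$; if $\traceRM{T,t}(r)=\tq$ then this quantity would also equal $\dualRM{\traceRM{T,t}(r)}{r}{\partial T,t}=0$. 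So $r$ is \emph{never} a pre-image of $\tq$ unless $\tq=0$, and it does not coincide with the minimum-norm pre-image either (in your notation $r=Ap$ for any pre-image $p$, and $Ap_{\min}=p_{\min}$ with $A$ skew-adjoint forces $p_{\min}=0$). Hence the chain $\|\tq\|_\trRM{\partial T,t}\le\|r\|_{V(T,t)}=\|\tq\|_{V(T,t)'}$ cannot be closed the way you propose.

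Your fallback reformulation, that $A$ restricted to $(\ker A)^\perp$ is an isometry, is indeed equivalent to the hard inequality, but you give no argument for it, and it is not an abstract Hilbert-space fact: a skew-adjoint operator of norm at most one need not be isometric on $(\ker A)^\perp$ (take a small multiple of a rotation by $\pi/2$ in $\R^2$). Saying that "the very choice of the norm \eqref{norm_VT} makes this hold" is the intuition, not the proof. The paper's proof supplies exactly the missing content: writing $r=(z,\TTheta,\btau)$, it forms the conjugate triple $u=\div(\Div\TTheta+t(\btau-\grad z))$, $\MM=\Grad(\grad z-t^2\Div\TTheta)$, $\btheta=-\grad z$ (essentially $-Ar$), then (i) tests the Riesz identity with compactly supported smooth functions to derive the distributional equations satisfied by $(z,\TTheta,\btau)$, (ii) uses them to show the conjugate triple again lies in $V(T,t)$, and (iii) verifies via \eqref{trT_def} and \eqref{trT_asym} that its trace is $\tq$ and its $V(T,t)$-norm equals $\|r\|_{V(T,t)}$, which yields $\|\tq\|_{V(T,t)'}\ge\|\tq\|_\trRM{\partial T,t}$. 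None of this PDE-level identification appears in your proposal, and the step it was meant to replace is false as stated.
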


\begin{proof}
Let $T\in\cT$ be arbitrary and fixed. By definition \eqref{trT_def} of the trace operator $\traceRM{T,t}$
and definition \eqref{norm_VT} of the norm $\|\cdot\|_{V(T,t)}$ we can bound
\begin{align*}
   \dualRM{\traceRM{T,t}(z,\TTheta,\btau)}{(\deltaz,\DeltaTheta,\deltatau)}{\partial T,t}
   &\le
   \|(z,\TTheta,\btau)\|_{V(T,t)} \|(\deltaz,\DeltaTheta,\deltatau)\|_{V(T,t)}
\end{align*}
for any $(z,\TTheta,\btau), (\deltaz,\DeltaTheta,\deltatau)\in V(T,t)$.
This proves that $\|\tq\|_{V(T,t)'}\le \|\tq\|_\trRM{\partial T,t}$.

Now let $\tq\in\HRM{}(\partial T,t)$ be given.
We define $(z,\TTheta,\btau)\in V(T,t)$ as the solution to the problem
\begin{align} \label{prob_RM_z}
   \ip{(z,\TTheta,\btau)}{(\deltaz,\DeltaTheta,\deltatau)}_{V(T,t)}
   =
   \dualRM{\tq}{(\deltaz,\DeltaTheta,\deltatau)}{\partial T,t}
   \quad\forall (\deltaz,\DeltaTheta,\deltatau)\in V(T,t).
\end{align}
We continue to define $(u,\MM,\btheta)\in V(T,t)$ as the solution to
\begin{align} \label{prob_RM_u}
   \ip{(u,\MM,\btheta)}{(\deltaz,\DeltaTheta,\deltatau)}_{V(T,t)}
   =
   \dualRM{\traceRM{T,t}(\deltaz,\DeltaTheta,\deltatau)}{(z,\TTheta,\btau)}{\partial T,t}
   \quad\forall (\deltaz,\DeltaTheta,\deltatau)\in V(T,t).
\end{align}
Selecting $(\deltaz,\DeltaTheta,\deltatau)=(z,\TTheta,\btau)$ in \eqref{prob_RM_z} shows that
\(
   \dualRM{\tq}{(z,\TTheta,\btau)}{\partial T,t} = \|(z,\TTheta,\btau)\|_{V(T,t)}^2.
\)
Now, if $(u,\MM,\btheta)$ has the trace $\tq$,
\(
   \traceRM{T,t}(u,\MM,\btheta) = \tq,
\)
then \eqref{prob_RM_u} yields
\(
   \dualRM{\tq}{(z,\TTheta,\btau)}{\partial T,t} = \|(u,\MM,\btheta)\|_{V(T,t)}^2
\)
so that
\[
   \|\tq\|_{V(T,t)'}
   \ge
   \frac{\dualRM{\tq}{(z,\TTheta,\btau)}{\partial T,t}}{\|(z,\TTheta,\btau)\|_{V(T,t)}}
   =
   \|(u,\MM,\btheta)\|_{V(T,t)}
   \ge
   \|\tq\|_\trRM{\partial T,t},
\]
which finally proves the stated norm identity.
Then we also conclude that $\HRM{}(\partial T,t)$ is closed since it is the image of a bounded below operator.

It remains to verify that
\(
   \traceRM{T,t}(u,\MM,\btheta) = \tq.
\)
We first show that
\begin{align} \label{u_z_rel}
   u &= \div(\Div\TTheta + t (\btau-\grad z)),\quad
   \MM = \Grad(\grad z - t^2 \Div\TTheta),\quad
   \btheta = -\grad z.
\end{align}
To this end we define $(\tilde u,\wilde\MM,\tilde\btheta)\in L_2(T)\times\LL_2^s(T)\times\bL_2(T)$ by
\begin{align*}
   \tilde u &:= \div(\Div\TTheta + t (\btau-\grad z)),\quad
   \wilde\MM := \Grad(\grad z - t^2 \Div\TTheta),\quad
   \tilde\btheta := -\grad z
\end{align*}
and show that it solves \eqref{prob_RM_u}. By uniqueness we then conclude that
$(\tilde u,\wilde\MM,\tilde\btheta)=(u,\MM,\btheta)$ so that \eqref{u_z_rel} holds.
Now, selecting in \eqref{prob_RM_z} smooth test functions with compact support in $T$
so that, respectively, only $\deltaz$, $\DeltaTheta$, or $\deltatau$ are non-zero, we deduce
the following relations in distributional sense,
\begin{align}
   z + \div\Bigl[\Div\Grad(\grad z-t^2\Div\TTheta)
                 - t \grad\bigl(z+\div\bigl\{\Div\TTheta + t(\btau-\grad z)\bigr\}\bigr)
           \Bigr]
   &=0,\label{z_reg}\\
   \TTheta + \Grad\Bigl[
                        \grad\div\bigl\{\Div\TTheta+t(\btau-\grad z)\bigr\}
                        - t^2\Div\Grad(\grad z-t^2\Div\TTheta)
                  \Bigr]
   &=0,\label{Theta_reg}\\
   \btau - \grad\div\bigl\{\Div\TTheta+t(\btau-\grad z)\bigr\}
   &=0.\label{tau_reg}
\end{align}
By the regularity $(z,\TTheta,\btau)\in V(T,t)$ we conclude that
\begin{alignat}{3}
   &\tilde u = \div\bigl\{\Div\TTheta + t (\btau-\grad z)\bigr\} &&\in H^1(T)
   \qquad&&\text{(from \eqref{tau_reg})},\nonumber\\
   &\Grad\bigl[\grad\tilde u-t^2\Div\wilde\MM\bigr]
   = -\TTheta &&\in \LL_2^s(T) &&\text{(from \eqref{Theta_reg})}, \label{rel_Theta}\\
   &\div\bigl(\Div\wilde\MM + t(\tilde\btheta-\grad\tilde u)\bigr) = -z &&\in L_2(T) &&\text{(from \eqref{z_reg})},
   \label{rel_z}
\end{alignat}
that is, $(\tilde u,\wilde\MM,\tilde\btheta)\in V(T,t)$. Furthermore, by definition of
$(\tilde u,\wilde\MM,\tilde\btheta)$, since $\grad\tilde u=\btau$ by \eqref{tau_reg}, and using
\eqref{rel_Theta} and \eqref{rel_z},
\begin{align*}
\lefteqn{
   \ip{(\tilde u,\wilde\MM,\tilde\btheta)}{(\deltaz,\DeltaTheta,\deltatau)}_{V(T,t)}
}
   \\&=
     \vdual{\tilde u}{\deltaz}_T
   + t\vdual{\grad\tilde u}{\grad\deltaz}_T
   + \vdual{\wilde\MM}{\DeltaTheta}_T
   + t\vdual{\tilde\btheta}{\deltatau}_T
   \\&\ + \vdual{\Grad(\grad\tilde u-t^2\Div\wilde\MM)}{\Grad(\grad\deltaz-t^2\Div\DeltaTheta)}_T
   \\&\ + \vdual{\div(\Div\wilde\MM+t(\tilde\btheta-\grad\tilde u))}
              {\div(\Div\DeltaTheta+t(\deltatau-\grad\deltaz))}_T
   \\&=
     \vdual{\div(\Div\TTheta + t (\btau-\grad z))}{\deltaz}_T
   + t\vdual{\btau}{\grad\deltaz}_T
   + \vdual{\Grad(\grad z - t^2 \Div\TTheta)}{\DeltaTheta}_T
   - t\vdual{\grad z}{\deltatau}_T
   \\&\ - \vdual{\TTheta}{\Grad(\grad\deltaz-t^2\Div\DeltaTheta)}_T
   - \vdual{z}{\div(\Div\DeltaTheta+t(\deltatau-\grad\deltaz))}_T
   \\&=
   -\dualRM{\traceRM{T,t}(z,\TTheta,\btau)}{(\deltaz,\DeltaTheta,\deltatau)}{\partial T,t}
   =
   \dualRM{\traceRM{T,t}(\deltaz,\DeltaTheta,\deltatau)}{(z,\TTheta,\btau)}{\partial T,t}.
\end{align*}
The last two relations hold by definition \eqref{trT_def} of the trace operator and
its skew symmetry \eqref{trT_asym}.
Recalling \eqref{prob_RM_u} we conclude that $(\tilde u,\wilde\MM,\tilde\btheta)=(u,\MM,\btheta)$
so that \eqref{u_z_rel} holds.
Now, using \eqref{u_z_rel}, relations \eqref{rel_Theta}, \eqref{rel_z}
with $(\tilde u,\wilde\MM,\tilde\btheta)$ replaced by $(u,\MM,\btheta)$,
and again the relation $\grad u=\grad\tilde u=\btau$, we find that
\begin{align*}
\lefteqn{
   \dualRM{\traceRM{T,t}(u,\MM,\btheta)}{(\deltaz,\DeltaTheta,\deltatau)}{\partial T,t}
}
   \\&=
   \vdual{u}{\div(\Div\DeltaTheta+t(\deltatau-\grad\deltaz))}_T
   -\vdual{\div(\Div\MM+t(\btheta-\grad u))}{\deltaz}_T
   \\&\ + \vdual{\MM}{\Grad(\grad\deltaz-t^2\Div\DeltaTheta)}_T
   -\vdual{\Grad(\grad u-t^2\Div\MM)}{\DeltaTheta}_T
   -t\vdual{\btheta}{\grad\deltaz}_T + t\vdual{\grad u}{\deltatau}_T
   \\&=
   \vdual{\div(\Div\TTheta + t (\btau-\grad z))}{\div(\Div\DeltaTheta+t(\deltatau-\grad\deltaz))}_T
   +\vdual{z}{\deltaz}_T
   \\&\ +\vdual{\Grad(\grad z-t^2\Div\TTheta)}{\Grad(\grad\deltaz-t^2\Div\DeltaTheta)}_T
   +\vdual{\TTheta}{\DeltaTheta}_T
   +t\vdual{\grad z}{\grad\deltaz}_T + t\vdual{\btau}{\deltatau}_T
   \\&=
   \ip{(z,\TTheta,\btheta)}{(\deltaz,\DeltaTheta,\deltatau)}_{V(T,t)}.
\end{align*}
Recalling \eqref{prob_RM_z} we conclude that
\[
   \dualRM{\traceRM{T,t}(u,\MM,\btheta)}{(\deltaz,\DeltaTheta,\deltatau)}{\partial T,t}
   =
   \dualRM{\tq}{(\deltaz,\DeltaTheta,\deltatau)}{\partial T,t}
   \quad\forall (\deltaz,\DeltaTheta,\deltatau)\in V(T,t),
\]
that is, $\traceRM{T,t}(u,\MM,\btheta)=\tq$. This finishes the proof.
\end{proof}

%===================================================================================================
\subsection{Norm identities in the trace space} \label{sec_jumps}

\begin{prop} \label{prop_jump}
Let $t>0$. For $(z,\TTheta,\btau)\in V(\cT,t)$ and $a\in\{c,s\}$ it holds
\[
   (z,\TTheta,\btau)\in \U{a}(t) \quad\Leftrightarrow\quad
   \dualRM{\traceRM{\cT,t}(\deltaz,\DeltaTheta,\deltatau)}{(z,\TTheta,\btau)}{\cS,t} = 0
   \quad\forall (\deltaz,\DeltaTheta,\deltatau)\in \U{a}(t).
\]
\end{prop}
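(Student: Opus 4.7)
The statement is a classical characterization of the conforming subspace inside the broken space via the vanishing of the trace duality against test functions in the conforming subspace. My plan is to treat the two directions separately, with the forward direction being immediate from skew symmetry and Lemma~\ref{la_tr}, and the reverse direction requiring a careful decoupling argument.

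For the direction $(\Rightarrow)$, I would exploit the local skew symmetry \eqref{trT_asym}. If both $(z,\TTheta,\btau)$ and $(\deltaz,\DeltaTheta,\deltatau)$ lie in $\U{a}(t)\subset U(t)$, then summing \eqref{trT_asym} over $T\in\cT$ gives
\begin{align*}
   \dualRM{\traceRM{\cT,t}(\deltaz,\DeltaTheta,\deltatau)}{(z,\TTheta,\btau)}{\cS,t}
   = -\dualRM{\traceRM{\cT,t}(z,\TTheta,\btau)}{(\deltaz,\DeltaTheta,\deltatau)}{\cS,t},
\end{align*}
and by Lemma~\ref{la_tr} the right-hand side reduces to four duality pairings on $\Gamma$. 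In each of the four terms, one of the two factors vanishes by the boundary conditions \eqref{Uc_BC} (for $a=c$, $z=\deltaz=0$ and $\grad z-t^2\Div\TTheta=\grad\deltaz-t^2\Div\DeltaTheta=0$) or \eqref{Us_BC} (for $a=s$, $z=\deltaz=0$ and $\TTheta\nn=\DeltaTheta\nn=0$).

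For the direction $(\Leftarrow)$, starting from $(z,\TTheta,\btau)\in V(\cT,t)$, I would again use skew symmetry on each element followed by the boundary representation \eqref{trT_rep}. Denoting $\alpha_\delta:=\nn\cdot(\Div\DeltaTheta+t(\deltatau-\grad\deltaz))$ and $\bbeta_\delta:=\grad\deltaz-t^2\Div\DeltaTheta$, and analogously $\alpha,\bbeta$ for $(z,\TTheta,\btau)$, each local pairing is a sum of four boundary integrals. Since $(\deltaz,\DeltaTheta,\deltatau)\in U(t)$, Lemma~\ref{la_reg} guarantees that $\deltaz$, $\DeltaTheta\nn$, $\bbeta_\delta$, and $\alpha_\delta$ have single-valued traces across interior faces, so the sum over $\cT$ regroups into interior-face integrals involving the jumps of $z$, $\TTheta\nn$, $\bbeta$, and $\alpha$ plus boundary integrals on $\Gamma$. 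By first selecting test functions supported strictly inside $\Omega$ and then test functions that satisfy the appropriate boundary conditions for $\U{a}(t)$, one separates the interior conformity conditions from the boundary conditions.

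The main obstacle is to show that one can isolate each of the four jump contributions independently on any given interior face, and likewise each boundary condition on $\Gamma$. Concretely, one must construct, for a prescribed face and for each of $\deltaz$, $\DeltaTheta\nn$, $\bbeta_\delta$, $\alpha_\delta$, a test function in $U(t)$ whose chosen trace quantity is essentially arbitrary on that face while the other three vanish; the compatibility constraints built into $U(t)$ (namely $\DeltaTheta\in\HDiv{\Omega}$ and $\bbeta_\delta\in\bH^1(\Omega)$) couple these quantities. This decoupling can be carried out by solving local auxiliary problems on patches of elements adjoining the face and using cutoff functions, exploiting the fact that the three independent fields $(\deltaz,\DeltaTheta,\deltatau)$ provide enough degrees of freedom to adjust each trace independently. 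From this one concludes $\jump{z}=\jump{\TTheta\nn}=\jump{\grad z-t^2\Div\TTheta}=\jump{\nn\cdot(\Div\TTheta+t(\btau-\grad z))}=0$ on every interior face, hence $(z,\TTheta,\btau)\in U(t)$, and then the analogous argument on $\Gamma$ yields the boundary conditions \eqref{Uc_BC} or \eqref{Us_BC} that characterize membership in $\U{a}(t)$.
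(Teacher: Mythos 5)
Your forward direction is fine and is essentially the paper's argument (skew symmetry plus Lemma~\ref{la_tr}, with each boundary pairing killed by the conditions \eqref{Uc_BC} or \eqref{Us_BC}). The reverse direction, however, has a genuine gap, and it occurs exactly where you place the ``main obstacle''. Your plan regroups the element-boundary sum $\sum_{T\in\cT}\dual{\cdot}{\cdot}_{\partial T}$ into integrals over individual interior faces involving jumps of $z$, $\TTheta\nn$, $\grad z-t^2\Div\TTheta$ and $\nn\cdot(\Div\TTheta+t(\btau-\grad z))$. At the regularity available in $V(\cT,t)$ this regrouping is not legitimate: by Lemma~\ref{la_reg} the quantities $\TTheta\nn$ and $\nn\cdot(\Div\TTheta+t(\btau-\grad z))$ are merely functionals on $\bH^{1/2}(\partial T)$ resp.\ $H^{1/2}(\partial T)$, and such normal traces of $\HDiv{T}$ and $\Hdiv{T}$ fields do not restrict to single faces, so ``$\jump{\TTheta\nn}=0$ on a given face'' is not a well-defined statement one can aim for. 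Moreover, the decoupling construction you invoke --- for a prescribed face, a test function in $\U{a}(t)$ with one of the four trace quantities essentially arbitrary there and the other three vanishing, built from ``local auxiliary problems on patches and cutoff functions'' --- is precisely the hard part of the proof and is only asserted, not carried out; the constraints $\DeltaTheta\in\HDiv{\Omega}$, $\grad\deltaz-t^2\Div\DeltaTheta\in\bH^1(\Omega)$ and $\Div\DeltaTheta+t(\deltatau-\grad\deltaz)\in\Hdiv{\Omega}$ couple these traces, and no argument is given that the coupling can be broken face by face.

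The paper avoids localization altogether. For the conformity in $\Omega$ it tests with globally defined functions --- $(0,0,\deltatau)$ with $\deltatau\in\Hdiv{\Omega}$ to get $z\in H^1_0(\Omega)$, $(0,\DeltaTheta,0)$ with $\DeltaTheta\in\DDs(\Omega)$ to get $\grad z-t^2\Div\TTheta\in\bH^1(\Omega)$ in the distributional sense, and $(\deltaz,0,0)$ with $\deltaz\in\cD(\Omega)$ to get $\div(\Div\TTheta+t(\btau-\grad z))\in L_2(\Omega)$ --- so the interior ``jump'' information is captured in one stroke by the identity of piecewise and distributional operators, with no face-wise pairings ever appearing. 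The remaining boundary conditions are then obtained by constructing test functions of the special form $(0,\Grad\bpsi,0)\in\U{a}(t)$, where $\bpsi$ solves an auxiliary Neumann-type problem $-\Div\Grad\bpsi=\brho$, chosen so that the resulting boundary datum runs over all of $\bH^{-1/2}(\Gamma)$ (case $a=c$, yielding $\grad z-t^2\Div\TTheta\in\bH^1_0(\Omega)$) or all of $\bH^{1/2}(\Gamma)$ (case $a=s$, yielding $\TTheta\in\HDivz{\Omega}$). If you want to keep your outline, you would have to replace the face-by-face isolation by such global (or at least distributional) test constructions; as written, the step from the vanishing duality to the vanishing of the four jumps does not go through.
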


\begin{proof}
The direction ``$\Rightarrow$'' follows from Lemma~\ref{la_tr} by noting that,
for $(z,\TTheta,\btau), (\deltaz,\DeltaTheta,\deltatau)\in \U{c}(t)$,
the traces of $z$, $\deltaz$, $\grad\deltaz-t^2\Div\DeltaTheta$, and $\grad z-t^2\Div\TTheta$ on $\Gamma$
vanish by definition of $\U{c}(t)$. In the case of $\U{s}(t)$ we use that
the traces of $z$, $\deltaz$, $\DeltaTheta\nn$, and $\TTheta\nn$ vanish on $\Gamma$.

We prove the direction ``$\Leftarrow$''.
For brevity we denote $\U{cs}(t):=\U{c}(t)\cap\U{s}(t)$.
Let $T\in\cT$ and $(z,\TTheta,\btau)\in V(\cT,t)$ be given.
\begin{enumerate}
\item
Selecting $\deltaz=0$, $\DeltaTheta=0$, and an arbitrary $\deltatau\in\Hdiv{\Omega}$ we have
$(\deltaz,\DeltaTheta,\deltatau)\in \U{cs}(t)$ and the relation
\[
   0 = \dualRM{\traceRM{\cT,t}(\deltaz,\DeltaTheta,\deltatau)}{(z,\TTheta,\btau)}{\cS,t}
     = -t\vdual{\div\deltatau}{z} - t\vdual{\deltatau}{\grad z}_\cT.
\]
This implies that $z\in H^1_0(\Omega)$.
\item
Selecting $\deltaz=0$, $\deltatau=0$, and an arbitrary tensor $\DeltaTheta\in\DDs(\Omega)$
it follows that $(\deltaz,\DeltaTheta,\deltatau)\in \U{cs}(t)$ and, in the distributional sense,
\begin{align*}
   \Grad(\grad z-t^2\Div\TTheta)(\DeltaTheta)
   =
   \vdual{\div\Div\DeltaTheta}{z} - t^2 \vdual{\Grad\Div\DeltaTheta}{\TTheta}
   =
   \vdual{\DeltaTheta}{\Grad(\grad z-t^2\Div\TTheta)}_\cT.
\end{align*}
Here, in the last step, we used the relation
$\dualRM{\traceRM{\cT,t}(\deltaz,\DeltaTheta,\deltatau)}{(z,\TTheta,\btau)}{\cS,t} = 0$.
It follows that $\Grad(\grad z-t^2\Div\TTheta)\in\LL_2^s(\Omega)$, that is,
$\grad z-t^2\Div\TTheta\in\bH^1(\Omega)$.
\item
Selecting $\DeltaTheta=0$, $\deltatau=0$, and an arbitrary element $\deltaz\in\cD(\Omega)$, it
holds $(\deltaz,\DeltaTheta,\deltatau)\in \U{cs}(t)$ and we find, in the distributional sense, that
\begin{align*}
   \div(\Div\TTheta+t(\btau-\grad z))(\deltaz)
   &=
   \vdual{\Grad\grad\deltaz}{\TTheta} - t \vdual{\grad\deltaz}{\btau} - t \vdual{\Delta\deltaz}{z}
   \\
   &=
   \vdual{\deltaz}{\div(\Div\TTheta+t(\btau-\grad z))}_\cT.
\end{align*}
In the last step we again made use of the relation
$\dualRM{\traceRM{\cT,t}(\deltaz,\DeltaTheta,\deltatau)}{(z,\TTheta,\btau)}{\cS,t} = 0$.
We conclude that $\div(\Div\TTheta+t(\btau-\grad z))\in L_2(\Omega)$.
\item
It remains to show that the trace of $\grad z-t^2\Div\TTheta\in\bH^1(\Omega)$ on $\Gamma$ vanishes
(if $a=c$) and that the normal-normal trace of $\TTheta$ on $\Gamma$ vanishes (if $a=s$).

\begin{enumerate}
\item Case $a=c$.
For a given $\bg\in\bH^{-1/2}(\Gamma)$ (the space of normal traces of $\HDiv{\Omega}$ on $\Gamma$)
we select $\brho\in\bH^1_0(\Omega)$ such that $\vdual{\brho}{\rr}+\dual{\bg}{\rr}_\Gamma=0$
for any rigid body (plate) motion $\rr\in\ker{\Grad}$,
and define $\bpsi\in\bH^1(\Omega)/(\ker\Grad)$
as the solution to
\[
   -\Div\Grad\bpsi = \brho\quad\text{in}\ \Omega,\qquad
   \Grad(\bpsi)\nn = \bg\quad\text{on}\ \Gamma.
\]
We then select $(\deltaz,\DeltaTheta,\deltatau):=(0,\Grad\bpsi,0)$ and note that
$(\deltaz,\DeltaTheta,\deltatau)\in \U{c}(t)$. Indeed,
$\grad\deltaz-t^2\Div\DeltaTheta=-t^2\Div\Grad\bpsi=t^2\brho\in\bH^1_0(\Omega)$ and
$\div(\Div\DeltaTheta+t(\deltatau-\grad\deltaz))=-\div\brho\in L_2(\Omega)$.
Using Lemma~\ref{la_tr} and the fact that
$z$, $\deltaz$, $\grad\deltaz-t^2\Div\DeltaTheta$ have zero trace on $\Gamma$
we deduce that
\[
   0 = \dualRM{\traceRM{\cT,t}(\deltaz,\DeltaTheta,\deltatau)}{(z,\TTheta,\btau)}{\cS,t}
     = \dualRM{\traceRM{\cT,t}(0,\Grad\bpsi,0)}{(z,\TTheta,\btau)}{\cS,t}
     = \dual{\bg}{\grad z-t^2\Div\Theta}_\Gamma.
\]
Since $\bg\in\bH^{-1/2}(\Gamma)$ was arbitrary we conclude that $\grad z-t^2\Div\Theta\in\bH^1_0(\Omega)$.

\item Case $a=s$.
For a given $\bg\in\bH^{1/2}(\Gamma)$ (the trace space of $\bH^1(\Omega)$) we use an extension
$\brho\in\bH^1(\Omega)$ with $\vdual{\brho}{\rr}=0$ $\forall\rr\in\ker\Grad$,
and define $\bpsi\in\bH^1(\Omega)/(\ker\Grad)$ as the solution to
\[
   -\Div\Grad\bpsi = \brho\quad\text{in}\ \Omega,\qquad
   \Grad(\bpsi)\nn = 0\quad\text{on}\ \Gamma.
\]
We then select $(\deltaz,\DeltaTheta,\deltatau):=(0,\Grad\bpsi,0)$ and note that
$(\deltaz,\DeltaTheta,\deltatau)\in \U{s}(t)$ and $\Div\DeltaTheta=-\bg$ on $\Gamma$.
Using Lemma~\ref{la_tr} and the fact that $z$, $\deltaz$, $\DeltaTheta\nn$ have zero trace on $\Gamma$
we deduce that
\[
   0 = \dualRM{\traceRM{\cT,t}(\deltaz,\DeltaTheta,\deltatau)}{(z,\TTheta,\btau)}{\cS,t}
     = \dualRM{\traceRM{\cT,t}(0,\Grad\bpsi,0)}{(z,\TTheta,\btau)}{\cS,t}
     = -t^2\dual{\bg}{\TTheta\nn}_\Gamma.
\]
Since $\bg\in\bH^{1/2}(\Gamma)$ was arbitrary we conclude that $\TTheta\in\HDivz{\Omega}$.
\end{enumerate}
\end{enumerate}
This finishes the proof.
\end{proof}

We continue to show that the minimum energy extension norm $\|\cdot\|_\trRM{\cS,t}$
cf.~\eqref{norm_tr_global}, is a product norm.

\begin{lemma} \label{la_split_norm}
Let $t>0$ and $a\in\{c,s\}$. The identity
\begin{align*}
   \|\tq\|_\trRM{\cS,t}^2
   =
   \sum_{T\in\cT} \|\tq\|_\trRM{\partial T,t}^2
   \quad\forall \tq\in \HRM{a}(\cS,t)
\end{align*}
holds true.
\end{lemma}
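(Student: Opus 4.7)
The plan is to prove the two inequalities separately. The direction $\|\tq\|_\trRM{\cS,t}^2 \ge \sum_{T\in\cT} \|\tq\|_\trRM{\partial T,t}^2$ is immediate: given any admissible global extension $(z,\TTheta,\btau)\in\U{a}(t)$ of $\tq$, its restriction to each $T\in\cT$ is a candidate in the local infimum, hence $\|\tq_T\|_\trRM{\partial T,t}^2\le\|(z,\TTheta,\btau)\|_{V(T,t)}^2$; summing over $T$ and using the identity $\|(z,\TTheta,\btau)\|_{U(t)}^2=\sum_{T\in\cT}\|(z,\TTheta,\btau)\|_{V(T,t)}^2$ (recall $\|\cdot\|_{U(t)}=\|\cdot\|_{V(\cT,t)}$ on $U(t)$), and passing to the infimum on the right, yields the claim.

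For the reverse inequality the strategy is to construct a single global extension by gluing local minimizers and verify that the result lies in $\U{a}(t)$. For each $T\in\cT$, I pick the local minimizer $(u_T,\MM_T,\btheta_T)\in V(T,t)$ provided by Lemma~\ref{la_trT_norms}, which achieves $\|(u_T,\MM_T,\btheta_T)\|_{V(T,t)}=\|\tq_T\|_\trRM{\partial T,t}$ and satisfies $\traceRM{T,t}(u_T,\MM_T,\btheta_T)=\tq_T$. The collective element $(u,\MM,\btheta):=((u_T,\MM_T,\btheta_T))_{T\in\cT}$ clearly lies in $V(\cT,t)$ and, by construction, $\traceRM{\cT,t}(u,\MM,\btheta)=\tq$ and $\|(u,\MM,\btheta)\|_{V(\cT,t)}^2=\sum_{T\in\cT}\|\tq_T\|_\trRM{\partial T,t}^2$. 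So everything reduces to showing $(u,\MM,\btheta)\in\U{a}(t)$.

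This is the main obstacle, and I propose to handle it via Proposition~\ref{prop_jump}: it suffices to verify
\[
   \dualRM{\traceRM{\cT,t}(\deltaz,\DeltaTheta,\deltatau)}{(u,\MM,\btheta)}{\cS,t}=0
   \quad\forall(\deltaz,\DeltaTheta,\deltatau)\in\U{a}(t).
\]
Splitting into element contributions and applying the local skew symmetry \eqref{trT_asym} termwise,
\[
   \dualRM{\traceRM{\cT,t}(\deltaz,\DeltaTheta,\deltatau)}{(u,\MM,\btheta)}{\cS,t}
   =-\sum_{T\in\cT}\dualRM{\tq_T}{(\deltaz,\DeltaTheta,\deltatau)}{\partial T,t}
   =-\dualRM{\tq}{(\deltaz,\DeltaTheta,\deltatau)}{\cS,t}.
\]
Since $\tq\in\HRM{a}(\cS,t)$, there exists some $(U,\MM^*,\btheta^*)\in\U{a}(t)$ with $\traceRM{\cT,t}(U,\MM^*,\btheta^*)=\tq$. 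Then the right-hand side equals $-\dualRM{\traceRM{\cT,t}(U,\MM^*,\btheta^*)}{(\deltaz,\DeltaTheta,\deltatau)}{\cS,t}$, which by the global skew symmetry and the ``$\Rightarrow$'' part of Proposition~\ref{prop_jump} (with the roles of the two $\U{a}(t)$-arguments swapped) vanishes. Hence $(u,\MM,\btheta)\in\U{a}(t)$, it is an admissible global extension of $\tq$, and its norm equals $\bigl(\sum_{T\in\cT}\|\tq_T\|_\trRM{\partial T,t}^2\bigr)^{1/2}$, so $\|\tq\|_\trRM{\cS,t}^2\le\sum_{T\in\cT}\|\tq_T\|_\trRM{\partial T,t}^2$, completing the proof.
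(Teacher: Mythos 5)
Your proposal is correct and follows essentially the same route as the paper: both directions are handled identically, with the nontrivial one obtained by gluing the local minimum-energy extensions, using the local skew symmetry \eqref{trT_asym} together with an arbitrary global extension of $\tq$ to show the glued function annihilates all traces of $\U{a}(t)$-elements, and then invoking Proposition~\ref{prop_jump} to conclude membership in $\U{a}(t)$. The only cosmetic difference is that you cite Lemma~\ref{la_trT_norms} explicitly for the attainment of the local minimizers, which the paper assumes implicitly.
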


\begin{proof}
We use standard techniques, see, e.g., \cite{CarstensenDG_16_BSF,FuehrerHN_19_UFK}.

The inequality $\sum_{T\in\cT} \|\tq\|_\trRM{\partial T,t}^2 \le \|\tq\|_\trRM{\cS,t}^2$
is immediate by definition of the norms. Now, let $\tq=(\tq_T)_{T\in\cT}\in \HRM{a}(\cS,t)$ be given.
There exists $(z,\TTheta,\btau)\in \U{a}(t)$ such that $\traceRM{\cT,t}(z,\TTheta,\btau)=\tq$ and,
for $T\in\cT$, let $(\tilde z_T,\wilde\TTheta_T,\tilde\btau_T)\in V(T,t)$
be such that $\traceRM{T,t}(\tilde z_T,\wilde\TTheta_T,\tilde\btau_T)=\tq_T$ and
\[
   \|\tq_T\|_\trRM{\partial T,t}
   =
   \|(\tilde z_T,\wilde\TTheta_T,\tilde\btau_T)\|_{V(T,t)}.
\]
We find that $(\tilde z,\wilde\TTheta,\tilde\btau)\in V(\cT,t)$ defined by
$(\tilde z,\wilde\TTheta,\tilde\btau)|_T:=(\tilde z_T,\wilde\TTheta_T,\tilde\btau_T)$ ($T\in\cT$)
satisfies
\begin{align*}
\lefteqn{
   \dualRM{\traceRM{\cT,t}(\deltaz,\DeltaTheta,\deltatau)}{(\tilde z,\wilde\TTheta,\tilde\btau)}{\cS,t}
}\\
   &=
   \sum_{T\in\cT} \dualRM{\traceRM{T,t}(\deltaz,\DeltaTheta,\deltatau)}
                         {(\tilde z_T,\wilde\TTheta_T,\tilde\btau_T)}{\partial T,t}
   =
   -\sum_{T\in\cT} \dualRM{\traceRM{T,t}(\tilde z_T,\wilde\TTheta_T,\tilde\btau_T)}
                          {(\deltaz,\DeltaTheta,\deltatau)}{\partial T,t}
   \\
   &=
   -\dualRM{\tq}{(\deltaz,\DeltaTheta,\deltatau)}{\cS,t}
   =
   -\dualRM{\traceRM{\cT,t}(z,\TTheta,\btau)}{(\deltaz,\DeltaTheta,\deltatau)}{\cS,t}
   =0\quad\forall (\deltaz,\DeltaTheta,\deltatau)\in \U{a}(t)
\end{align*}
by Proposition~\ref{prop_jump}, so that $(\tilde z,\wilde\TTheta,\tilde\btau)\in \U{a}(t)$ also by
Proposition~\ref{prop_jump}. We conclude that
\begin{align*}
   \sum_{T\in\cT} \|\tq\|_\trRM{\partial T,t}^2
   &=
   \sum_{T\in\cT} \|(\tilde z_T,\wilde\TTheta_T,\tilde\btau_T\|_{V(T,t)}^2
   =
   \|(\tilde z,\wilde\TTheta,\tilde\btau)\|_{V(\cT,t)}^2
   \ge
   \|\tq\|_\trRM{\cS,t}^2
\end{align*}
where the last bound is due to the definition of the norm $\|\cdot\|_\trRM{\cS,t}$.
This finishes the proof.
\end{proof}

Finally we show that the norms $\|\cdot\|_{V(\cT,t)'}$ and
$\|\cdot\|_{\trRM{\cS,t}}$ are identical in $\HRM{a}(\cS,t)$ ($a\in\{c,s\}$). This is the
product variant of Lemma~\ref{la_trT_norms}.

\begin{prop} \label{prop_tr_norms}
Let $t>0$ and $a\in\{c,s\}$. It holds the identity
\[
   \|\tq\|_{V(\cT,t)'} = \|\tq\|_\trRM{\cS,t}
   \quad\forall\tq\in\HRM{a}(\cS,t).
\]
In particular,
\[
   \traceRM{\cT,t}:\; \U{a}(t)\to \HRM{a}(\cS,t)
\]
has unit norm and $\HRM{a}(\cS,t)$ is closed.
\end{prop}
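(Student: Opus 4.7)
The plan is to lift the local norm identity of Lemma~\ref{la_trT_norms} to the product setting via the elementwise splitting of the minimum-energy norm provided by Lemma~\ref{la_split_norm}. Since $V(\cT,t)$ is a broken (Cartesian product) space, no inter-element conformity enters the game, so local Riesz representers on individual elements can be freely assembled into an element of $V(\cT,t)$.

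\textbf{Upper bound.} To show $\|\tq\|_{V(\cT,t)'} \le \|\tq\|_\trRM{\cS,t}$, I would take $\tq\in\HRM{a}(\cS,t)$, an arbitrary representer $(u,\MM,\btheta)\in\U{a}(t)$, and $(z,\TTheta,\btau)\in V(\cT,t)$, and use the elementwise bound $\dualRM{\traceRM{T,t}(u,\MM,\btheta)}{(z,\TTheta,\btau)}{\partial T,t} \le \|(u,\MM,\btheta)\|_{V(T,t)} \|(z,\TTheta,\btau)\|_{V(T,t)}$ that is already contained in the proof of Lemma~\ref{la_trT_norms} via~\eqref{trT_def} and~\eqref{norm_VT}. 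Summing over $T\in\cT$ and applying Cauchy--Schwarz on the finite sum yields $\dualRM{\tq}{(z,\TTheta,\btau)}{\cS,t} \le \|(u,\MM,\btheta)\|_{U(t)} \|(z,\TTheta,\btau)\|_{V(\cT,t)}$; dividing, infimizing over representers, and taking the supremum completes this direction.

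\textbf{Lower bound.} For the reverse inequality I would construct an explicit maximizer. For each $T\in\cT$, let $(z_T,\TTheta_T,\btau_T)\in V(T,t)$ be the Riesz representer of the functional $\dualRM{\tq_T}{\cdot}{\partial T,t}$ on $V(T,t)$. As in the proof of Lemma~\ref{la_trT_norms}, testing against itself and combining with that lemma identifies $\dualRM{\tq_T}{(z_T,\TTheta_T,\btau_T)}{\partial T,t}$ with both $\|(z_T,\TTheta_T,\btau_T)\|_{V(T,t)}^2$ and $\|\tq_T\|_\trRM{\partial T,t}^2$. Assembling $(z,\TTheta,\btau)\in V(\cT,t)$ by $(z,\TTheta,\btau)|_T := (z_T,\TTheta_T,\btau_T)$ and using~\eqref{tr_RM_dual} together with Lemma~\ref{la_split_norm} then produces $\dualRM{\tq}{(z,\TTheta,\btau)}{\cS,t} = \|\tq\|_\trRM{\cS,t}^2 = \|(z,\TTheta,\btau)\|_{V(\cT,t)}^2$, so the defining ratio of $\|\tq\|_{V(\cT,t)'}$ attains $\|\tq\|_\trRM{\cS,t}$.

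\textbf{Consequences and obstacles.} The norm identity gives unit operator norm for $\traceRM{\cT,t}:\U{a}(t)\to\HRM{a}(\cS,t)$, since any representer bounds the range norm from above and the infimum is attained. Closedness of $\HRM{a}(\cS,t)\subset V(\cT,t)'$ follows from a standard Cauchy-sequence argument: minimum-energy representers of a $\|\cdot\|_{V(\cT,t)'}$-Cauchy sequence of traces are $U(t)$-Cauchy in the closed subspace $\U{a}(t)$, and the trace of their limit is the limit of the traces. I do not anticipate any serious obstacle; the content of the proposition is entirely contained in the local identity of Lemma~\ref{la_trT_norms} and the decomposition of Lemma~\ref{la_split_norm}, and the proof reduces to careful bookkeeping of the elementwise duality pairings.
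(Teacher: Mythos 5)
Your proof is correct and follows essentially the same route as the paper: both rest on the local identity of Lemma~\ref{la_trT_norms} together with the splitting of Lemma~\ref{la_split_norm}, the paper factorizing the supremum over the product space $V(\cT,t)$ directly while you make the same mechanism explicit by assembling the local Riesz representers into a maximizer. The closedness argument via (attained) minimum-energy representers is likewise just an unpacked version of the paper's ``image of a bounded below operator'' remark.
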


\begin{proof}
With the preparations at hand the proof follows standard product arguments,
cf., e.g.,~\cite[Theorem~2.3]{CarstensenDG_16_BSF}, \cite[Proposition~3.5]{FuehrerHN_19_UFK}.
For convenience of the reader let us recall the arguments.

Let $\tq=(\tq_T)_{T\in\cT}\in \HRM{a}(\cS,t)$ be given.
Using Lemmas~\ref{la_split_norm} and~\ref{la_trT_norms} we calculate
\begin{align*}
   \|\tq\|_{V(\cT,t)'}^2
   &=
   \left(\sup_{0\not=(z,\TTheta,\btau)\in V(\cT,t)}
   \frac{\sum_{T\in\cT}\dualRM{\tq_T}{(z,\TTheta,\btau)}{\partial T,t}}
        {\|(z,\TTheta,\btau)\|_{V(\cT,t)}}
   \right)^2
   =
   \sum_{T\in\cT}
   \sup_{0\not=(z,\TTheta,\btau)\in V(T,t)}
   \frac{\dualRM{\tq_T}{(z,\TTheta,\btau)}{\partial T,t}^2}{\|(z,\TTheta,\btau)\|_{V(T,t)}^2}
   \\
   &=
   \sum_{T\in\cT} \|\tq_T\|_{V(T,t)'}^2
   =
   \sum_{T\in\cT} \|\tq_T\|_\trRM{\partial T,t}^2
   =
   \|\tq\|_\trRM{\cS,t}^2.
\end{align*}
Since $\HRM{a}(\cS,t)$ is the image of a bounded below operator, it is closed.
\end{proof}

%===================================================================================================
\subsection{The Kirchhoff--Love case ($t=0$)} \label{sec_jumps_KLove}

In the following we collect the definitions and properties of spaces, norms and traces
from this section in the limit $t=0$, which is the Kirchhoff--Love case.
For the clamped plate, the corresponding results are taken from \cite{FuehrerHN_19_UFK},
whereas for the simply supported plate we have to introduce spaces that reflect this boundary condition.

Let us start collecting spaces and norms (the defined terms are those from \cite{FuehrerHN_19_UFK} in the
notation introduced there). For any $T\in\cT$ we have the space
\begin{align} \label{norm_V0}
   &H^2(T)\times\HdDiv{T}
   :=
   \{(z,\TTheta);\; (z,\TTheta,0)\in V(T,0)\}\quad \text{with norm}\nonumber\\
   &\|z\|_{2,T}^2 + \|\TTheta\|_{\dDiv,T}^2
   :=
   \|z\|_T^2 + \|\Grad\grad z\|_T^2 + \|\TTheta\|_T^2 + \|\div\Div\TTheta\|_T^2
   =
   \|(z,\TTheta,0)\|_{V(T,0)}^2.
\end{align}
That is, $V(T,0)=H^2(T)\times \HdDiv{T}$ is the quotient space with respect to the third component.
Correspondingly, there is the product space
\begin{align*}
   &H^2(\cT)\times\HdDiv{\cT}
   :=
   \{(z,\TTheta);\; (z,\TTheta,0)\in V(\cT,0)\}
\end{align*}
with squared norm $\|z\|_{2,\cT}^2 + \|\TTheta\|_{\dDiv,\cT}^2$, and the global quotient space $\U{}(0)$ 
with squared norm $\|z\|_2^2 + \|\TTheta\|_\dDiv^2$.
In the following, we simply drop the third component and refer to the quotient spaces as
\[
   V(\cT,0) = H^2(\cT)\times\HdDiv{\cT},\quad
   \U{}(0) = H^2(\Omega)\times\HdDiv{\Omega}.
\]
Note that, when $t=0$, the boundary conditions \eqref{Uc_BC} and \eqref{Us_BC}
become $z=0,\nn\cdot\grad z=0$ ($a=c$) and $z=0,\nn\cdot\TTheta\nn=0$ ($a=s$) on $\Gamma$, respectively.
Therefore, we define $\U{c}(0) := H^2_0(\Omega)\times\HdDiv{\Omega}$ needed in the case that
$a=c$ but, in order to define the space $\U{s}(\Omega)$ corresponding to $a=s$,
we have to give $\nn\cdot\TTheta\nn=0$ on $\Gamma$ a meaning when $\TTheta\in\HdDiv{\Omega}$.

\begin{remark} \label{rem_RM0_reg}
Lemma~\ref{la_reg} does not apply in the case $t=0$. Indeed, as shown in \cite{FuehrerHN_19_UFK}
by a counterexample, $(z,\TTheta)\in \U{}(0)$ does not imply $\TTheta\in\HDiv{\Omega}$.
Though, $(z,\TTheta,\btau)\in \U{c}(0)$ does mean that $z\in H^2_0(\Omega)$ and $\TTheta\in\HdDiv{\Omega}$,
and $(z,\TTheta,\btau)\in V(\cT,0)$ iff $z\in H^2(\cT)$ and $\TTheta\in\HdDiv{\cT}$.
\end{remark}

To consider the setting for $t=0$ we recall the following trace operators from \cite{FuehrerHN_19_UFK},
\begin{align} \label{trGG}
   \traceGG{}:\;
   \begin{cases}
      H^2(\Omega) &\to\ \HdDiv{\cT}',\\
      \quad z    &\mapsto\ \dual{\traceGG{}(z)}{\DeltaTheta}_{\cS}
                    := \vdual{z}{\div\Div\DeltaTheta}_\cT - \vdual{\Grad\grad z}{\DeltaTheta},
   \end{cases}
\end{align}
and
\begin{align} \label{trDD}
   \traceDD{}:\;
   \begin{cases}
      \HdDiv{\Omega} &\to\ H^2(\cT)',\\
      \qquad z    &\mapsto\ \dual{\traceDD{}(\TTheta)}{\deltaz}_{\cS}
                   := \vdual{\div\Div\TTheta}{\deltaz} - \vdual{\TTheta}{\Grad\grad\deltaz}_\cT.
   \end{cases}
\end{align}
For brevity we define $H^2_s(\Omega):=H^2(\Omega)\cap H^1_0(\Omega)$, and introduce the space
\begin{equation} \label{HdDivz_def}
   \HdDivz{\Omega} := \{\TTheta\in\HdDiv{\Omega};\; \dual{\traceDD{}(\TTheta)}{\deltaz}_\cS=0
                                                    \ \forall \deltaz\in H^2_s(\Omega)\}.
\end{equation}
Note that, for $\TTheta\in\HDiv{\Omega}\cap\HdDiv{\Omega}$,
$\dual{\traceDD{}(\TTheta)}{\deltaz}_\cS=-\dual{\TTheta\nn}{\grad\deltaz}_\Gamma$
for any $\deltaz\in H^2_s(\Omega)$ where the latter duality is the standard pairing
between $\bH^{-1/2}(\Gamma)$ and $\bH^{1/2}(\Gamma)$.
Then, taking into account that $\deltaz\in H^2_s(\Omega)$ has zero trace on $\Gamma$,
$\dual{\TTheta\nn}{\grad\deltaz}_\Gamma=0$ for any $\deltaz\in H^2_s(\Omega)$, this means that
$\nn\cdot\TTheta\nn=0$ on $\Gamma$ for a sufficiently smooth function $\TTheta$.
For a detailed discussion of the components of $\traceDD{}$ we refer to \cite{FuehrerHN_19_UFK}.

We are ready to define the trace spaces needed for the Kirchhoff--Love problems.
For the clamped plate we introduce
\[
   \bH^{3/2,1/2}_{00}(\cS) := \traceGG{}(H^2_0(\Omega)),\quad
   \bH^{-3/2,-1/2}(\cS) := \traceDD{}(\HdDiv{\Omega})
\]
whereas, for the simply supported plate, we need the spaces
\[
   \bH^{3/2,1/2}_{0}(\cS) := \traceGG{}(H^2_s(\Omega)),\quad
   \bH^{-3/2,-1/2}_{0}(\cS) := \traceDD{}(\HdDivz{\Omega}).
\]
These trace spaces are provided with canonical trace norms,
\begin{align*}
   &\|\tv\|_\trggrad{\cS}
   :=
   \inf\{\|v\|_2;\; v\in H^2(\Omega),\ \traceGG{}(v)=\tv\}
   && (\tv\in \bH^{3/2,1/2}_{0}(\cS)),\\
   &\|\tq\|_\trddiv{\cS}
   :=
   \inf \Bigl\{\|\TTheta\|_\dDiv;\; \TTheta\in \HdDiv{\Omega},\ \traceDD{}(\TTheta)=\tq\Bigr\}
   && (\tq\in \bH^{-3/2,-1/2}(\cS))
\end{align*}
(note that $\bH^{3/2,1/2}_{00}(\cS)\subset\bH^{3/2,1/2}_{0}(\cS)$ and
$\bH^{-3/2,-1/2}_{0}(\cS)\subset\bH^{-3/2,-1/2}(\cS)$ are closed subspaces furnished with the same
respective norm).
Now, setting $t=0$, the Reissner--Mindlin trace operator reveals two components,
\begin{align} \label{trRM0}
\lefteqn{
   \dualRM{\traceRM{\cT,0}(z,\TTheta,\btau)}{(\deltaz,\DeltaTheta,\deltatau)}{\cS,0}
}\nonumber\\
   &=
   \vdual{z}{\div\Div\DeltaTheta}_\cT
   -\vdual{\div\Div\TTheta}{\deltaz}
   +\vdual{\TTheta}{\Grad\grad\deltaz}_\cT
   -\vdual{\Grad\grad z}{\DeltaTheta}
   \nonumber\\
   &=
   \dual{\traceGG{}(z)}{\DeltaTheta}_\cS - \dual{\traceDD{}(\TTheta)}{\deltaz}_\cS
\end{align}
for $(z,\TTheta,\btau)\in \U{}(0)$ and $(\deltaz,\DeltaTheta,\deltatau)\in V(\cT,0)$.
In the following we again drop the third argument and write $\traceRM{\cT,0}(z,\TTheta)$ instead of
$\traceRM{\cT,0}(z,\TTheta,\btau)$. Thus, we have a trace operator with two independent components,
\[
   \traceRM{\cT,0}:\;
   \left\{\begin{array}{cll}
      \U{}(0) & \to & V(\cT,0)',\\[1em]
      (z,\TTheta) & \mapsto & \dual{\traceRM{\cT,0}(z,\TTheta)}{(\deltaz,\DeltaTheta)}_\cS
                              =
                              \dual{\traceGG{}(z)}{\DeltaTheta}_\cS
                              -
                              \dual{\traceDD{}(\TTheta)}{\deltaz}_\cS
   \end{array}\right..
\]
Then, defining
\begin{equation} \label{spaces_t0}
   \U{c}(0) := H^2_0(\Omega)\times\HdDiv{\Omega} \quad\text{and}\quad
   \U{s}(0) := H^2_s(\Omega)\times\HdDivz{\Omega}
\end{equation}
($\U{c}(0)$ had been defined previously) our trace spaces are
\begin{align} \label{trace_space_RMc0}
   \HRM{c}(\cS,0) := \traceRM{\cT,0}(\U{c}(t))
   &= \bH^{3/2,1/2}_{00}(\cS) \times \bH^{-3/2,-1/2}(\cS)
   \quad\text{(clamped)}
\end{align}
and
\begin{align} \label{trace_space_RMs0}
   \HRM{s}(\cS,0) := \traceRM{\cT,0}(\U{s}(t))
   &=: \bH^{3/2,1/2}_{0}(\cS) \times \bH^{-3/2,-1/2}_{0}(\cS)
   \quad\text{(simple support)}
\end{align}
with the canonical trace norm
\begin{align} \label{norm_tr}
   \|(\tv,\tq)\|_\trRM{\cS,0}
   &= \Bigl(\|\tv\|_\trggrad{\cS}^2 + \|\tq\|_\trddiv{\cS}^2\Bigr)^{1/2}.
\end{align}
The dualities between $\HRM{a}(\cS,0)$ ($a\in\{c,s\}$) and $V(\cT,0)$ are given by the respective component
dualities,
\begin{align} \label{duality_RM0}
    \dualRM{(\tv,\tq)}{(\deltaz,\DeltaTheta)}{\cS,0}
    =
    \dual{\tv}{\DeltaTheta}_\cS - \dual{\tq}{\deltaz}_\cS
    :=
    \dual{\traceGG{}(z)}{\DeltaTheta}_\cS - \dual{\traceDD{}(\TTheta)}{\deltaz}_\cS
\end{align}
for $(\tv,\tq)\in\HRM{a}(\cS,0)$, $(\deltaz,\DeltaTheta)\in V(\cT,0)$ and any
$(z,\TTheta)\in \U{a}(0)$ with $\traceRM{\cT,0}(z,\TTheta)=(\tv,\tq)$, cf.~\eqref{trRM0}.
They give rise to the duality norms
\begin{align*}
   %\label{norm_tr_RM0}
   \|(\tv,\tq)\|_{V(\cT,0)'}
   &=
   \sup_{0\not=(\deltaz,\DeltaTheta)\in V(\cT,0)}
   \frac{\dualRM{(\tv,\tq)}{(\deltaz,\DeltaTheta)}{\cS,0}}{\|(\deltaz,\DeltaTheta)\|_{V(\cT,0)}}
   \quad \forall(\tv,\tq)\in \HRM{a}(\cS,0)\quad(a\in\{c,s\}).
\end{align*}
In the following we collect some technical results.

\begin{lemma} \label{la_trRM0_asym}
The trace operator $\traceRM{\cT,0}$ satisfies the relation
\begin{align*} %\label{trRM0_asym}
   \dualRM{\traceRM{\cT,0}(z,\TTheta)}{(\deltaz,\DeltaTheta)}{\cS,0}
   =
   -\dualRM{\traceRM{\cT,0}(\deltaz,\DeltaTheta)}{(z,\TTheta)}{\cS,0}
\end{align*}
for any $(z,\TTheta), (\deltaz,\DeltaTheta)\in U(0)$.
\end{lemma}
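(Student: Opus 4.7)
The proof is essentially a direct computation using the explicit representation of $\traceRM{\cT,0}$ given in \eqref{trRM0}. The plan is to write out both sides of the claimed identity using that formula and observe that each term on the left cancels with the corresponding term on the right by symmetry of the $L_2$-inner product.

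More concretely, for arbitrary $(z,\TTheta), (\deltaz,\DeltaTheta) \in U(0) = H^2(\Omega) \times \HdDiv{\Omega}$, formula \eqref{trRM0} expands $\dualRM{\traceRM{\cT,0}(z,\TTheta)}{(\deltaz,\DeltaTheta)}{\cS,0}$ as the sum of four $L_2$-pairings involving $\vdual{z}{\div\Div\DeltaTheta}_\cT$, $\vdual{\div\Div\TTheta}{\deltaz}$, $\vdual{\TTheta}{\Grad\grad\deltaz}_\cT$, and $\vdual{\Grad\grad z}{\DeltaTheta}$ with appropriate signs. The key observation is that, since $z,\deltaz \in H^2(\Omega)$, the piecewise Hessians $\pwGrad\pwgrad z$ and $\pwGrad\pwgrad \deltaz$ coincide with the global objects $\Grad\grad z, \Grad\grad\deltaz \in \LL_2^s(\Omega)$; similarly, since $\TTheta, \DeltaTheta \in \HdDiv{\Omega}$, the piecewise applications of $\div\Div$ agree with the global ones, landing in $L_2(\Omega)$. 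Therefore the index $\cT$ may be dropped in all four terms, turning each of them into a standard $L_2$-inner product on $\Omega$.

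Applying the same expansion to $\dualRM{\traceRM{\cT,0}(\deltaz,\DeltaTheta)}{(z,\TTheta)}{\cS,0}$ yields the analogous four pairings with the roles of $(z,\TTheta)$ and $(\deltaz,\DeltaTheta)$ swapped. Adding the two expansions pairs up, e.g., $\vdual{z}{\div\Div\DeltaTheta}$ with $-\vdual{\div\Div\DeltaTheta}{z}$, and $\vdual{\TTheta}{\Grad\grad\deltaz}$ with $-\vdual{\Grad\grad\deltaz}{\TTheta}$; by symmetry of the $L_2$-inner product each such pair vanishes, proving the identity.

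There is no real obstacle here: the only subtlety is verifying that one may legitimately replace all piecewise differential operators by their global counterparts, which is exactly guaranteed by the definition \eqref{spaces_t0}-type regularity of elements of $U(0)$ (with the caveat, noted in Remark~\ref{rem_RM0_reg}, that $\TTheta \in \HdDiv{\Omega}$ does \emph{not} give extra control on $\Div\TTheta$ individually, but the combination $\div\Div\TTheta$ is all we need). Once this is observed, the statement follows by inspection, exactly as the local skew symmetry \eqref{trT_asym} followed from the definition \eqref{trT_def}.
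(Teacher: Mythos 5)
Your argument is correct and is essentially the paper's own proof: the paper also deduces the identity directly from the representation \eqref{trRM0}, merely packaging the termwise cancellation you perform as the component identity \eqref{trGGDD}, $\dual{\traceGG{}(z)}{\TTheta}_\cS=\dual{\traceDD{}(\TTheta)}{z}_\cS$, which itself is just the symmetry of the $L_2$-pairings once piecewise and global operators are identified for arguments in $U(0)$. Your caveat via Remark~\ref{rem_RM0_reg} (only $\div\Div\TTheta$, not $\Div\TTheta$, is controlled) is the right observation and suffices.
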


\begin{proof}
The stated relation follows from \eqref{trRM0} by noting that
\begin{align} \label{trGGDD}
   \dual{\traceGG{}(z)}{\TTheta}_\cS=\dual{\traceDD{}(\TTheta)}{z}_\cS
   \quad\forall z\in H^2(\Omega), \TTheta\in\HdDiv{\Omega}
\end{align}
by definitions \eqref{trGG},\eqref{trDD}, see also \cite[(3.14)]{FuehrerHN_19_UFK}.
\end{proof}

\begin{remark}
Whereas the relation of Lemma~\ref{la_trRM0_asym} corresponds to relation \eqref{trT_asym}
of Lemma~\ref{la_trT}, the decomposition \eqref{trT_rep} and Lemma~\ref{la_tr} do not apply
in the case $t=0$. This is due to the lacking regularity of $\TTheta$ for $(z,\TTheta)\in V(T,0)$
or $(z,\TTheta)\in\U{}(0)$, cf.~Remark~\ref{rem_RM0_reg}.
\end{remark}

The following result is Proposition~\ref{prop_jump} in the case $t=0$.

\begin{prop} \label{prop_jump0}
For $a\in\{c,s\}$ and $(z,\TTheta)\in V(\cT,0)$ it holds
\[
   (z,\TTheta)\in \U{a}(0) \quad\Leftrightarrow\quad
   \dualRM{\traceRM{\cT,0}(\deltaz,\DeltaTheta)}{(z,\TTheta)}{\cS,0} = 0
   \quad\forall (\deltaz,\DeltaTheta)\in \U{a}(0).
\]
\end{prop}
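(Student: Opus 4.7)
The plan is to mirror Proposition~\ref{prop_jump}'s argument while exploiting the cleaner two-component decomposition \eqref{trRM0} of $\traceRM{\cT,0}$ into $\traceGG{}$ and $\traceDD{}$. For direction ``$\Rightarrow$'', combining the antisymmetry of Lemma~\ref{la_trRM0_asym} with \eqref{trRM0} reduces the task to showing that $\dual{\traceGG{}(z)}{\DeltaTheta}_\cS$ and $\dual{\traceDD{}(\TTheta)}{\deltaz}_\cS$ vanish individually for $(z,\TTheta),(\deltaz,\DeltaTheta)\in\U{a}(0)$. For $a=s$ these identities reduce to the defining condition \eqref{HdDivz_def} and its symmetric counterpart via \eqref{trGGDD}. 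For $a=c$, density of $\cD(\Omega)$ in $H^2_0(\Omega)$ together with the distributional identity $\vdual{\zeta}{\div\Div\DeltaTheta}=\vdual{\Grad\grad\zeta}{\DeltaTheta}$ for $\zeta\in\cD(\Omega)$ supplies the first term, and \eqref{trGGDD} handles the second.

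For direction ``$\Leftarrow$'', expanding the hypothesis via \eqref{trRM0} with the roles of solution and test swapped (legitimate since $(\deltaz,\DeltaTheta)\in\U{}(0)$ and $(z,\TTheta)\in V(\cT,0)$) yields
\[
  \dual{\traceGG{}(\deltaz)}{\TTheta}_\cS=\dual{\traceDD{}(\DeltaTheta)}{z}_\cS\qquad\forall(\deltaz,\DeltaTheta)\in\U{a}(0).
\]
Independent testing with $(\deltaz,0)$ for $\deltaz\in\cD(\Omega)$ and with $(0,\DeltaTheta)$ for $\DeltaTheta\in\DDs(\Omega)$ then delivers, via the distributional definitions, the global regularities $\TTheta\in\HdDiv{\Omega}$ and $z\in H^2(\Omega)$. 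For $a=s$, the boundary condition $\TTheta\in\HdDivz{\Omega}$ falls out by applying \eqref{trGGDD} (now available since $\TTheta\in\HdDiv{\Omega}$) to the hypothesis $\dual{\traceGG{}(\deltaz)}{\TTheta}_\cS=0$ for $\deltaz\in H^2_s(\Omega)$, and invoking \eqref{HdDivz_def}. For $a=c$, the condition $z\in H^2_0(\Omega)$ is the analogous characterization of $\ker\traceGG{}$ already established in \cite{FuehrerHN_19_UFK}.

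The single genuinely new step, and the main obstacle, is proving $z|_\Gamma=0$ in the simply supported case. Following step~4(b) of Proposition~\ref{prop_jump}, for arbitrary $\bg\in\bH^{1/2}(\Gamma)$ I would choose an extension $\brho\in\bH^1(\Omega)$ orthogonal in $\bL_2(\Omega)$ to $\ker\Grad$, solve the pure-Neumann problem $-\Div\Grad\bpsi=\brho$ in $\Omega$ with $(\Grad\bpsi)\nn=0$ on $\Gamma$, and take $\DeltaTheta:=\Grad\bpsi$. Since $\DeltaTheta\nn=0$ on $\Gamma$ and $\Div\DeltaTheta=-\brho\in\bH^1(\Omega)$, one has $\DeltaTheta\in\HDivz{\Omega}\cap\HdDiv{\Omega}\subset\HdDivz{\Omega}$. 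A direct integration by parts using $z\in H^2(\Omega)$ collapses the duality to $\dual{\nn\cdot\bg}{z}_\Gamma=0$, and since $\nn\cdot\bg$ ranges densely in $L_2(\Gamma)$ as $\bg$ varies while $z|_\Gamma\in H^{3/2}(\Gamma)\subset L_2(\Gamma)$, this forces $z|_\Gamma=0$. The subtlety is that the $\bL_2$-orthogonality constraint on $\brho$ (needed for solvability of the Neumann problem) must not obstruct the range of $\nn\cdot\bg$; this is verified by the usual finite-dimensional argument since $\ker\Grad$ is finite-dimensional and one has enough freedom in $\bH^1_0(\Omega)$ to adjust the projection.
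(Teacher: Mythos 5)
Your proposal is correct, and its skeleton coincides with the paper's: the decomposition \eqref{trRM0} together with \eqref{trGGDD} and \eqref{HdDivz_def} settles ``$\Rightarrow$'' (for $a=c$ the paper simply cites the iff-characterizations of $H^2_0(\Omega)$ and $\HdDiv{\Omega}$ from \cite{FuehrerHN_19_UFK}, which is the content of your density argument), and in ``$\Leftarrow$'' both you and the paper first recover $z\in H^2(\Omega)$, $\TTheta\in\HdDiv{\Omega}$ by testing with compactly supported pairs, then get $\TTheta\in\HdDivz{\Omega}$ directly from \eqref{HdDivz_def}. The one place where you genuinely diverge is the final step $z|_\Gamma=0$ in the simply supported case: the paper disposes of it by a density argument, using that $\dual{\traceGG{}(z)}{\DeltaTheta}_\Gamma=\dual{\nn\cdot\Div\DeltaTheta}{z}_\Gamma$ for smooth $\DeltaTheta\in\HdDivz{\Omega}$, and refers to the proof of Proposition~3.8(i) in \cite{FuehrerHN_19_UFK} for the details, whereas you construct explicit admissible test tensors $\DeltaTheta=\Grad\bpsi\in\HDivz{\Omega}\cap\HdDiv{\Omega}\subset\HdDivz{\Omega}$ by solving a pure-traction elasticity problem, exactly mirroring step~4(b) of Proposition~\ref{prop_jump}. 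Your computation is sound (with $\DeltaTheta\nn=0$ and $\Div\DeltaTheta=-\brho$ the duality indeed collapses to $\dual{z}{\nn\cdot\bg}_\Gamma=0$, and since $\bg$ ranges over $\bH^{1/2}(\Gamma)$, which is dense in $\bL_2(\Gamma)$, one gets $z\nn=0$, hence $z=0$ a.e.\ on $\Gamma$ — you do not even need density of $\{\nn\cdot\bg\}$ itself, and the claim $z|_\Gamma\in H^{3/2}(\Gamma)$ is unnecessary and delicate on Lipschitz boundaries; $z|_\Gamma\in L_2(\Gamma)$ suffices), and the compatibility adjustment of $\brho$ by an $\bH^1_0(\Omega)$ correction against the finite-dimensional space $\ker\Grad$ is exactly what the paper does for $t>0$. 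The trade-off: the paper's route is shorter but leans on the external reference for this boundary identification, while yours is self-contained and reuses machinery already present in the $t>0$ proof; it also implicitly reproves Corollary~\ref{cor_jump0}.
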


\begin{proof}
We consider the case $a=c$. Let $(z,\TTheta)\in V(\cT,0)$ be given.
By \cite[Proposition~3.4(i)]{FuehrerHN_19_UFK}, $\TTheta\in\HdDiv{\Omega}$ if and only if
$\dual{\traceGG{}(\deltaz)}{\TTheta}_\cS=0$ for any $\deltaz\in H^2_0(\Omega)$.
Also, by \cite[Proposition~3.8(i)]{FuehrerHN_19_UFK},
$z\in H^2_0(\Omega)$ iff $\dual{\traceDD{}(\DeltaTheta)}{z}_\cS=0$ for any
$\DeltaTheta\in\HdDiv{\Omega}$. Since $\U{c}(0)=H^2_0(\Omega)\times\HdDiv{\Omega}$,
\eqref{trRM0} gives the statement (interchanging $(z,\TTheta)$ and $(\deltaz,\DeltaTheta)$ there).

Now we consider the case $a=s$. For $(z,\TTheta), (\deltaz,\DeltaTheta)\in \U{s}(0)$ we obtain
\begin{align*}
   \dualRM{\traceRM{\cT,0}(\deltaz,\DeltaTheta)}{(z,\TTheta)}{\cS,0}
   &\overset{\mathrm{def}}=
   \vdual{\deltaz}{\div\Div\TTheta}_\cT
   -\vdual{\div\Div\DeltaTheta}{z}
   +\vdual{\DeltaTheta}{\Grad\grad z}_\cT
   -\vdual{\Grad\grad\deltaz}{\TTheta}
   \\
   &=
   \vdual{\deltaz}{\div\Div\TTheta}
   -\vdual{\div\Div\DeltaTheta}{z}
   +\vdual{\DeltaTheta}{\Grad\grad z}
   -\vdual{\Grad\grad\deltaz}{\TTheta}
   \\
   &=
   \dual{\traceDD{}(\TTheta)}{\delta z}_\cS
   -
   \dual{\traceDD{}(\DeltaTheta)}{z}_\cS = 0,
\end{align*}
cf.~\eqref{HdDivz_def}. On the other hand, if
\[
   \dualRM{\traceRM{\cT,0}(\deltaz,\DeltaTheta)}{(z,\TTheta)}{\cS,0} = 0
   \quad\forall (\deltaz,\DeltaTheta)\in \U{s}(0)\cap\U{c}(0),
\]
one deduces that $(z,\TTheta)\in U(0)$ as in the case $a=c$. Then,
\[
   \dualRM{\traceRM{\cT,0}(\deltaz,0)}{(z,\TTheta)}{\cS,0}
   =
   \dual{\traceDD{}(\TTheta)}{\delta z}_\Gamma
   =
   0
   \quad\forall \deltaz\in H^2_s(\Omega)
\]
reveals that $\TTheta\in\HdDivz{\Omega}$ by definition \eqref{HdDivz_def}, and
\[
   \dualRM{\traceRM{\cT,0}(0,\DeltaTheta)}{(z,\TTheta)}{\cS,0}
   =
   -\dual{\traceDD{}(\DeltaTheta)}{z}_\cS
   =
   -\dual{\traceGG{}(z)}{\DeltaTheta}_\Gamma
   =
   0
   \quad\forall \DeltaTheta\in\HdDivz{\Omega}
\]
shows that $z=0$ on $\Gamma$ by density since
$\dual{\traceGG{}(z)}{\DeltaTheta}_\Gamma=\dual{\nn\cdot\Div\DeltaTheta}{z}_\Gamma$
for smooth tensors $\DeltaTheta\in\HdDivz{\Omega}$. For details we refer to the
proof of \cite[Proposition~3.8(i)]{FuehrerHN_19_UFK}.

Together, we have shown that $(z,\TTheta)\in \U{s}(0)$.
\end{proof}

\begin{cor} \label{cor_jump0}
Let $z\in H^2(\Omega)$. Then, $z\in H^2_s(\Omega)$ if and only if
\[
   \dual{\traceGG{}(z)}{\DeltaTheta}_\Gamma = 0\quad\forall\DeltaTheta\in\HdDivz{\Omega}.
\]
\end{cor}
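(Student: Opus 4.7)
The plan is to mirror the structure used at the end of the proof of Proposition~\ref{prop_jump0}, which already contains the key ingredients. Both directions revolve around the integration-by-parts identity
\[
   \dual{\traceGG{}(z)}{\DeltaTheta}_\Gamma = \dual{\nn\cdot\Div\DeltaTheta}{z}_\Gamma
\]
valid for $z\in H^2(\Omega)$ and smooth $\DeltaTheta\in\HdDivz{\Omega}$ (the constraint $\nn\cdot\DeltaTheta\nn=0$ on $\Gamma$ being precisely what kills the $\nn\cdot\grad z$ contribution after elementwise integration by parts and appropriate reassembly along $\Gamma$).

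For the ``$\Rightarrow$'' direction I would assume $z\in H^2_s(\Omega)$ and apply the global identity \eqref{trGGDD}, which turns the skeleton pairing $\dual{\traceGG{}(z)}{\DeltaTheta}_\cS$ into $\dual{\traceDD{}(\DeltaTheta)}{z}_\cS$. By the very definition \eqref{HdDivz_def}, the latter vanishes for every $\DeltaTheta\in\HdDivz{\Omega}$ since $z\in H^2_s(\Omega)$. Because $z$ is globally in $H^2(\Omega)$ and $\DeltaTheta$ globally in $\HdDiv{\Omega}$, the piecewise operators in the definition of $\traceGG{}(z)$ reduce to global ones, so all interior-face contributions collapse and the skeleton duality is in fact a pure $\Gamma$-duality, giving the claimed vanishing.

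For the ``$\Leftarrow$'' direction I would fix $z\in H^2(\Omega)$ satisfying the vanishing hypothesis and first restrict attention to smooth $\DeltaTheta\in\HdDivz{\Omega}$, for which the displayed integration-by-parts identity can be verified directly. The hypothesis then reads $\dual{\nn\cdot\Div\DeltaTheta}{z}_\Gamma=0$ for every such $\DeltaTheta$, and I would conclude $z|_\Gamma=0$ (equivalently $z\in H^2_s(\Omega)$) by a density argument over the range $\{\nn\cdot\Div\DeltaTheta|_\Gamma : \DeltaTheta\in\HdDivz{\Omega}\cap C^\infty(\bar\Omega)\}$ inside the dual of the trace space of $H^2(\Omega)$ on $\Gamma$.

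The main obstacle is making this density step rigorous. Concretely, one needs to show that for any boundary datum $g$ ranging over a dense subspace of the relevant trace space, one can construct a smooth symmetric tensor $\DeltaTheta$ on $\Omega$ with $\nn\cdot\DeltaTheta\nn=0$ on $\Gamma$ and $\nn\cdot\Div\DeltaTheta$ equal (or approximating) $g$ on $\Gamma$. This is a local lifting/extension problem near the Lipschitz boundary $\Gamma$; it is the same construction needed in \cite[Proposition~3.8(i)]{FuehrerHN_19_UFK}, and I would cite that proof rather than redo it. Once this density is in hand, the corollary is immediate.
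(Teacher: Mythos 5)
Your proposal is correct and takes essentially the same route as the paper: the paper's proof simply invokes Proposition~\ref{prop_jump0} with $a=s$ and test functions $(\deltaz,\DeltaTheta)=(0,\DeltaTheta)\in H^2_s(\Omega)\times\HdDivz{\Omega}$, and the ingredients you spell out — identity \eqref{trGGDD} together with definition \eqref{HdDivz_def} for one direction, and the smooth-tensor integration-by-parts identity with the density/lifting step delegated to \cite[Proposition~3.8(i)]{FuehrerHN_19_UFK} for the other — are exactly those appearing in the proof of that proposition. In effect you have inlined the relevant part of that argument rather than citing the proposition, which is fine.
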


\begin{proof}
This is the statement of Proposition~\ref{prop_jump0} when selecting $a=s$ and test functions
$(\deltaz,\DeltaTheta)=(0,\DeltaTheta)\in \U{s}(0)=H^2_s(\Omega)\times\HdDivz{\Omega}$.
One only has to note the splitting \eqref{trRM0} of the trace operator $\traceRM{\cT,0}$.
\end{proof}

Next we formulate Proposition~\ref{prop_tr_norms} in the case $t=0$.

\begin{prop} \label{prop_trRM0_norms}
It holds the identity
\[
   \|(\tv,\tq)\|_\trRM{\cS,0} = \|(\tv,\tq)\|_{V(\cT,0)'}
   \quad\forall (\tv,\tq)\in\HRM{a}(\cS,0),\ a\in\{c,s\}.
\]
In particular,
\[
   \traceRM{\cT,0}:\; \U{a}(0)\to \HRM{a}(\cS,0)\quad (a\in\{c,s\})
\]
have unit norm and $\HRM{a}(\cS,0)$ ($a\in\{c,s\}$) are closed.
\end{prop}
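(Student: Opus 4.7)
The plan is to mirror the structure of the proof of Proposition~\ref{prop_tr_norms}, but exploiting the product structure of the Kirchhoff--Love trace space. The essential observation is that the duality pairing \eqref{duality_RM0} splits as
$\dual{\tv}{\DeltaTheta}_\cS - \dual{\tq}{\deltaz}_\cS$
with $\deltaz$ and $\DeltaTheta$ independent, and $V(\cT,0)=H^2(\cT)\times\HdDiv{\cT}$ is an orthogonal product with squared norm $\|\deltaz\|_{2,\cT}^2+\|\DeltaTheta\|_{\dDiv,\cT}^2$. By the standard product-space argument (select the directions of optimal $\deltaz$ and $\DeltaTheta$ separately, then optimize the two-dimensional ratio that remains), this yields the factorization
\begin{align*}
   \|(\tv,\tq)\|_{V(\cT,0)'}^2
   &=
   \sup_{\DeltaTheta\in\HdDiv{\cT}}
   \frac{\bigl|\dual{\tv}{\DeltaTheta}_\cS\bigr|^2}{\|\DeltaTheta\|_{\dDiv,\cT}^2}
   +
   \sup_{\deltaz\in H^2(\cT)}
   \frac{\bigl|\dual{\tq}{\deltaz}_\cS\bigr|^2}{\|\deltaz\|_{2,\cT}^2}.
\end{align*}
Comparing with \eqref{norm_tr}, it therefore suffices to identify each supremum with the respective minimum energy extension norm.

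The next step is to prove the two componentwise identities
$\sup_{\DeltaTheta}\dual{\tv}{\DeltaTheta}_\cS/\|\DeltaTheta\|_{\dDiv,\cT}=\|\tv\|_\trggrad{\cS}$
and
$\sup_{\deltaz}\dual{\tq}{\deltaz}_\cS/\|\deltaz\|_{2,\cT}=\|\tq\|_\trddiv{\cS}$.
For $a=c$, both identities are already available from \cite{FuehrerHN_19_UFK} (cf.\ the references cited in the proof hint, in particular \cite[Proposition~3.5]{FuehrerHN_19_UFK}), so no new work is required. For $a=s$, the $\le$ direction in each identity is immediate from Cauchy--Schwarz, using definitions \eqref{trGG}--\eqref{trDD} together with the regularity of representatives. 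For the $\ge$ direction, I would follow the Riesz-representation strategy used in Lemma~\ref{la_trT_norms}: for the $\tv$-component, given $\tv=\traceGG{}(v)$ with $v\in H^2_s(\Omega)$, solve a Riesz problem in $H^2_s(\Omega)$ equipped with $\|\cdot\|_2$ whose right-hand side is the duality pairing against $\DeltaTheta$, and verify via Corollary~\ref{cor_jump0} that a minimizing sequence can be chosen in $H^2_s(\Omega)$; symmetrically, for the $\tq$-component, solve a Riesz problem in $\HdDivz{\Omega}$ with $\|\cdot\|_\dDiv$, the constraint $\TTheta\in\HdDivz{\Omega}$ being enforced via \eqref{HdDivz_def} and Proposition~\ref{prop_jump0}.

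Combining the factorization with these four componentwise identities immediately yields $\|(\tv,\tq)\|_{V(\cT,0)'}=\|(\tv,\tq)\|_\trRM{\cS,0}$. Closedness of $\HRM{a}(\cS,0)$ then follows because, by this identity, $\traceRM{\cT,0}:\U{a}(0)\to\HRM{a}(\cS,0)$ is bounded below (on the quotient by its kernel, cf.\ Proposition~\ref{prop_jump0}), hence has closed range. The unit-norm statement is a direct reformulation of the identity just proved.

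The main obstacle I anticipate is the simply supported $\tv$-component identity, since the extension must stay in $H^2_s(\Omega)$ rather than the larger space $H^2(\Omega)$, and in $H^2_s(\Omega)$ no natural symmetry with the $\traceDD{}$ operator is available; the clean way around this is to first characterize the kernel of $\traceGG{}|_{H^2_s(\Omega)}$ using Corollary~\ref{cor_jump0}, then argue that the orthogonal complement of this kernel in $H^2_s(\Omega)$ with respect to $\|\cdot\|_2$ provides the realizer. The remainder of the argument is then essentially bookkeeping, paralleling the $t>0$ proof.
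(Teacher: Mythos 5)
Your skeleton coincides with the paper's: the dual norm is split over the product $V(\cT,0)=H^2(\cT)\times\HdDiv{\cT}$ using \eqref{duality_RM0}, the claim is reduced to the componentwise identities \eqref{pfc1}--\eqref{pfs2}, the clamped components are quoted from \cite{FuehrerHN_19_UFK}, and closedness follows from the bounded-below property; all of that matches the paper. You differ only in how the simply supported components are handled, and there your plan is built around an obstacle that does not exist: the canonical trace norms $\|\tv\|_\trggrad{\cS}$ and $\|\tq\|_\trddiv{\cS}$ are defined as infima over \emph{unconstrained} extensions $v\in H^2(\Omega)$ and $\TTheta\in\HdDiv{\Omega}$ also for traces in $\bH^{3/2,1/2}_{0}(\cS)$ and $\bH^{-3/2,-1/2}_{0}(\cS)$ (see the parenthetical remark after their definition), so no extension has to be kept in $H^2_s(\Omega)$ or $\HdDivz{\Omega}$, and the machinery you propose for this purpose (Corollary~\ref{cor_jump0}, the kernel of $\traceGG{}$ restricted to $H^2_s(\Omega)$, orthogonal complements, Riesz problems posed in the constrained spaces) is unnecessary. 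The constraint would also be vacuous: membership in $\HdDivz{\Omega}$ depends only on $\traceDD{}(\TTheta)$ by \eqref{HdDivz_def}, and two $H^2(\Omega)$-extensions of the same $\traceGG{}$-trace differ by a kernel element, which lies elementwise in $H^2_0(T)$ and hence in $H^2_0(\Omega)\subset H^2_s(\Omega)$. The paper's proof exploits exactly this: \eqref{pfs1} is the restriction of \eqref{pfc1} to a closed subspace carrying the same norm, so there is nothing to prove, and the proof of \eqref{pfc2} in \cite{FuehrerHN_19_UFK} never uses the boundary condition, so it applies verbatim to \eqref{pfs2}. With the constraints dropped, what remains of your $a=s$ argument is a re-derivation of \cite[Propositions~3.5 and~3.9]{FuehrerHN_19_UFK}; if you do pursue the constrained Riesz route instead, note that its one substantive step --- identifying the trace of the Riesz realizer, the analogue of the distributional computation in Lemma~\ref{la_trT_norms} --- is exactly what your sketch leaves open, and restricting the test space to $H^2_s(\Omega)$ makes that identification harder, not easier.
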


\begin{proof}
Using the product property $V(\cT,0)=H^2(\cT)\times\HdDiv{\cT}$ and duality \eqref{duality_RM0}, the relation
\begin{align*}
   \|(\tv,\tq)\|_{V(\cT,0)'}^2
   &=
   \sup_{0\not=\DeltaTheta\in\HdDiv{\cT}}
   \frac{\dual{\tv}{\DeltaTheta}_\cS^2}{\|\DeltaTheta\|_{\dDiv,\cT}^2}
   +
   \sup_{0\not=\deltaz\in H^2(\cT)}
   \frac{\dual{\tq}{\deltaz}_\cS^2}{\|\deltaz\|_{2,\cT}^2}
   \\
   &=:
   \|\tv\|_{(\dDiv,\cT)'}^2 + \|\tq\|_{(2,\cT)'}^2
   \quad \forall(\tv,\tq)\in \HRM{c}(\cS,0)\cup \HRM{s}(\cS,0)
\end{align*}
holds. Then, the statement is a combination of relation \eqref{norm_tr}
with the identities
\begin{align}
   \label{pfc1}
   \|\tq\|_{(2,\cT)'} &= \|\tq\|_\trddiv{\cS}\quad (\tq\in\bH^{-3/2,-1/2}(\cS)),\\
   \label{pfc2}
   \|\tv\|_{(\dDiv,\cT)'} &= \|\tv\|_\trggrad{\cS}\quad (\tv\in\bH^{3/2,1/2}_{00}(\cS))
\end{align}
when $a=c$, and
\begin{align}
   \label{pfs1}
   \|\tq\|_{(2,\cT)'} &= \|\tq\|_\trddiv{\cS}\quad (\tq\in\bH^{-3/2,-1/2}_{0}(\cS)),\\
   \label{pfs2}
   \|\tv\|_{(\dDiv,\cT)'} &= \|\tv\|_\trggrad{\cS}\quad (\tv\in\bH^{3/2,1/2}_{0}(\cS))
\end{align}
when $a=s$. The former identities are true by Propositions~3.5 and~3.9, respectively,
from \cite{FuehrerHN_19_UFK}. Furthermore, \eqref{pfc1} implies \eqref{pfs1} since
$\bH^{-3/2,-1/2}_{0}(\cS)\subset \bH^{-3/2,-1/2}(\cS)$, and inspection reveals
that the proof of \eqref{pfc2} by \cite[Proposition~3.9]{FuehrerHN_19_UFK} also applies to
\eqref{pfs2}. (We remark that in \cite{FuehrerHN_19_UFK} our norm $\|\cdot\|_\trggrad{\cS}$
is referred to as $\|\cdot\|_\trggrad{0,\cS}$ in $\bH^{3/2,1/2}_{00}(\cS)$.)
\end{proof}

%===================================================================================================
\section{Variational formulation and DPG method} \label{sec_uw}

Having all the necessary spaces and norm relations at hand we return to the construction
of an ultraweak formulation of the Reissner--Mindlin problem. Recall the preliminary formulation \eqref{VFa}.
From Lemma~\ref{la_trT} it is now clear that the interface terms in \eqref{VFa} can be represented as
\begin{align*}
   &\sum_{T\in\cT} \dual{u}{\nn\cdot(\Div\TTheta+t(\btau-\grad z))}_{\partial T}
    -\sum_{T\in\cT} \dual{\nn\cdot(\Div\MM+t(\btheta-\grad u))}{z}_{\partial T}
   \\
   &+\sum_{T\in\cT} \dual{\MM\nn}{\grad z-t^2\Div\TTheta}_{\partial T}
    -\sum_{T\in\cT} \dual{\grad u-t^2\Div\MM}{\TTheta\nn}_{\partial T}
   \\
   =&\
   \sum_{T\in\cT} \dualRM{\traceRM{T,t}(u,\MM,\btheta)}{(z,\TTheta,\btau)}{\partial T,t}
   =
   \dualRM{\traceRM{\cT,t}(u,\MM,\btheta)}{(z,\TTheta,\btau)}{\cS,t}.
\end{align*}
We introduce the independent trace variable $\tq:=\traceRM{\cT,t}(u,\MM,\btheta)$ and define the spaces
\begin{align*}
   &\U{a}(\cT,t) := L_2(\Omega)\times\LL_2^s(\Omega)\times\bL_2(\Omega)\times \HRM{a}(\cS,t)
   \quad (t>0, a\in\{c,s\}),\\
   &\U{a}(\cT,0) := L_2(\Omega)\times\LL_2^s(\Omega)\times\{0\}\times \HRM{a}(\cS,0)
   \quad (a\in\{c,s\}).
\end{align*}
Here, $\U{a}(\cT,0)$ is understood as being the corresponding quotient space with respect
to the third component $\bL_2(\Omega)$, and we recall \eqref{trace_space_RMc0}, \eqref{trace_space_RMs0}
for the definition of $\HRM{a}(\cS,0)$.
We consider the norm
\begin{align} \label{norm_U}
   \|(u,\MM,\btheta,\tq)\|_{U(\cT,t)}
   &:=
   \Bigl(\|u\|^2 + \|\MM\|^2 + t\|\btheta\|^2 + \|\tq\|_{V(\cT,t)'}^2\Bigr)^{1/2}
   \quad (t\ge 0).
\end{align}
With the preparations in Section~\ref{sec_jumps_KLove}, we are able to consider the thickness parameter
$t$ including the case $t=0$, which represents the Kirchhoff--Love model.

Our ultraweak variational formulation of \eqref{prob} with boundary condition \eqref{pBCc} ($a=c$, clamped)
or \eqref{pBCs} ($a=s$, simple support) is:
\emph{For given $f\in L_2(\Omega)$ and $t\in [0,1]$, find $(u,\MM,\btheta,\tq)\in \U{a}(\cT,t)$ such that}
\begin{align} \label{VF}
   b_t(u,\MM,\btheta,\tq;z,\TTheta,\btau) = L(z,\TTheta,\btau)
   \quad\forall (z,\TTheta,\btau)\in V(\cT,t).
\end{align}
Here,
\begin{align} \label{b}
   b_t(u,\MM,\btheta,\tq;z,\TTheta,\btau)
   :=
   &\vdual{u}{\div(\Div\TTheta+t(\btau-\grad z))}_\cT
   +\vdual{\MM}{\cCinv\TTheta+\Grad(\grad z-t^2\Div\TTheta)}_\cT
   \nonumber\\
   &+t\vdual{\btheta}{\btau-\grad z}_\cT
   - \dualRM{\tq}{(z,\TTheta,\btau)}{\cS,t}
\end{align}
and
\begin{align*}
   L(z,\TTheta,\btau) := -\vdual{f}{z}.
\end{align*}
In the case $t=0$, the bilinear form reduces to
\[
   b_0(u,\MM,\btheta,\tq;z,\TTheta,\btau)
   =
   \vdual{u}{\div\Div\TTheta}_\cT
   +\vdual{\MM}{\cCinv\TTheta+\Grad\grad z}_\cT
   - \dualRM{\tq}{(z,\TTheta)}{\cS,0}
\]
with $\tq\in\HRM{a}(\cS,0)$, a quotient space.
Recall that the skeleton duality in \eqref{b} is defined by \eqref{tr_RM_dual} ($t>0$)
and \eqref{duality_RM0} ($t=0$). Furthermore, we recall the definition of $V(\cT,t)$
as a product space through completion with respect to the component norm \eqref{norm_VT}.
This applies to $t\in [0,1]$, see also \eqref{norm_V0} for the case $t=0$.

One of our main results is the following theorem.

\begin{theorem} \label{thm_stab}
Let $a\in\{c,s\}$. For any function $f\in L_2(\Omega)$ and any $t\in [0,1]$, there exists
a unique solution $(u,\MM,\btheta,\tq)\in \U{a}(\cT,t)$ to \eqref{VF}. It is uniformly bounded,
\[
   \|(u,\MM,\btheta,\tq)\|_{U(\cT,t)} \lesssim \|f\|
\]
with a hidden constant that is independent of $f$, $\cT$, and $t\in [0,1]$.
Furthermore, $(u,\MM,\btheta)\in \U{a}(t)$ solves \eqref{prob} and
$\tq=\traceRM{\cT,t}(u,\MM,\btheta)$.
\end{theorem}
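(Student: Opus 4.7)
The plan is to verify the Babu\v ska--Ne\v cas conditions for the bilinear form $b_t$ with constants independent of $t \in [0,1]$: boundedness, a uniform inf--sup condition on $\U{a}(\cT,t)$, and non-degeneracy in $V(\cT,t)$. Boundedness $|b_t(U;V)| \lesssim \|U\|_{U(\cT,t)} \|V\|_{V(\cT,t)}$ follows by applying Cauchy--Schwarz to each volume term in \eqref{b} and using $|\dualRM{\tq}{V}{\cS,t}| \le \|\tq\|_{V(\cT,t)'} \|V\|_{V(\cT,t)}$, where the trace factor is the last component of $\|U\|_{U(\cT,t)}$ and coincides with the minimum-energy-extension norm $\|\tq\|_\trRM{\cS,t}$ by Proposition~\ref{prop_tr_norms} (and its $t=0$ analogue Proposition~\ref{prop_trRM0_norms}).

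To obtain existence and the stated uniform bound, I would first solve the strong problem \eqref{prob}: for any $f \in L_2(\Omega)$ and $t \in [0,1]$, the classical Reissner--Mindlin theory (for $t>0$) and the Kirchhoff--Love analysis of \cite{FuehrerHN_19_UFK} (for $t=0$) deliver a unique $(u,\MM,\btheta) \in \U{a}(t)$ satisfying \eqref{p1}--\eqref{p3} and the uniform bound $\|(u,\MM,\btheta)\|_{U(t)} \lesssim \|f\|$. Setting $\tq := \traceRM{\cT,t}(u,\MM,\btheta) \in \HRM{a}(\cS,t)$, Lemma~\ref{la_trT} (respectively \eqref{trRM0} at $t=0$) rewrites the element-wise boundary integrals of \eqref{VFa} as $\dualRM{\tq}{(z,\TTheta,\btau)}{\cS,t}$, so that $(u,\MM,\btheta,\tq)$ solves \eqref{VF}. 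Definition \eqref{norm_tr_global} bounds $\|\tq\|_\trRM{\cS,t} \le \|(u,\MM,\btheta)\|_{U(t)}$, so the strong-problem estimate upgrades to the estimate $\|(u,\MM,\btheta,\tq)\|_{U(\cT,t)} \lesssim \|f\|$.

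The remaining and principal work is the inf--sup condition, which by the closed-range theorem reduces to showing that the only $V = (z,\TTheta,\btau) \in V(\cT,t)$ with $b_t(U;V) = 0$ for all $U \in \U{a}(\cT,t)$ is $V = 0$. Varying the volume components of $U$ over smooth compactly-supported fields forces $V$ to satisfy an adjoint PDE system on each $T \in \cT$ in the distributional sense, which upgrades its componentwise regularity element-by-element to $V(\cT,t)$. Next, varying $\tq$ over $\HRM{a}(\cS,t)$ and using Proposition~\ref{prop_jump} (respectively Proposition~\ref{prop_jump0}) forces the mesh-skeleton jumps and the prescribed boundary components of $V$ to vanish, so $V \in \U{a}(t)$. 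Because the formulation was constructed to be skew self-adjoint (the symmetry manifest in \eqref{trT_asym} and Lemma~\ref{la_trRM0_asym} is precisely what matches the adjoint back to the primal system, with a harmless sign change in $\btheta$), $V$ then solves a homogeneous version of \eqref{prob}, and the same uniform well-posedness invoked in the existence step yields $V=0$.

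The main obstacle is precisely the uniform well-posedness of \eqref{prob} on the full range $t \in [0,1]$ for both boundary conditions and for non-convex polygonal $\Omega$: the a priori estimate must pass continuously to the Kirchhoff--Love limit $t=0$, and it is this $t$-uniformity that drives both the existence estimate and the inf--sup constant. Two subtleties magnify the difficulty. First, the $t$-weighted norm \eqref{norm_VT} weights the rotation variable $\btheta$ by $t^{1/2}$ and the shear relation degenerates as $t \to 0$, so the standard Brezzi--Fortin-type estimates for Reissner--Mindlin must be recast to match the $U(t)$-norm of $(u,\MM,\btheta)$ exactly. Second, the adjoint recovery of $V \in \U{a}(t)$ from $V \in V(\cT,t)$ under jump cancellation relies crucially on the trace machinery of Section~\ref{sec_traces_jumps}; once that is in place, the abstract argument is standard, but the $t=0$ case must separately invoke the quotient-space conventions and Remark~\ref{rem_RM0_reg} to recover $\TTheta \in \HdDiv{\Omega}$.
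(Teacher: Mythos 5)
Your outline contains two genuine gaps, and the second is the heart of the theorem. First, the reduction ``the inf--sup condition, by the closed-range theorem, reduces to showing that $b_t(U;V)=0$ for all $U$ implies $V=0$'' is not valid: injectivity of the adjoint $B_t^*$ only yields that the range of $B_t$ is dense in $V(\cT,t)'$; it gives no quantitative inf--sup constant, no closedness of the range, and not even injectivity of $B_t$ itself, so uniqueness of the solution of \eqref{VF} is left unproven. Worse, even a qualitative bijectivity argument of this kind cannot produce constants that are uniform in $t\in[0,1]$ and in $\cT$, which is the whole point of the statement. In the paper, the inf--sup condition \eqref{infsup} is obtained through the framework of \cite{CarstensenDG_16_BSF}: the field-variable inf--sup \eqref{infsup1} follows from the \emph{uniform a priori estimate} for the adjoint system \eqref{adj} (Lemma~\ref{la_adj_well}), the trace inf--sup \eqref{infsup2} is the norm identity of Proposition~\ref{prop_tr_norms} (Proposition~\ref{prop_trRM0_norms} for $t=0$), and the kernel characterization of $\U{a}(t)$ is Propositions~\ref{prop_jump}/\ref{prop_jump0}; the adjoint-injectivity argument you sketch in your third paragraph (testing with smooth compactly supported fields, then using the jump characterization, then the homogeneous adjoint problem) is essentially Lemma~\ref{la_inj} of the paper, but there it is the \emph{complementary} ingredient to the inf--sup, not a substitute for it.

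Second, your existence-plus-bound step outsources the decisive estimate to ``classical Reissner--Mindlin theory'': you assert $\|(u,\MM,\btheta)\|_{U(t)}\lesssim\|f\|$ uniformly in $t\in[0,1]$, for both the clamped and the soft simply supported plate, on Lipschitz (possibly non-convex) domains, $d\in\{2,3\}$, in exactly the $t$-weighted norm of the paper and including the $t=0$ quotient-space case. This uniform stability is precisely what the paper has to establish and does establish from scratch -- for the (structurally identical, skew-adjoint) system \eqref{adj} in Lemma~\ref{la_adj_well}, via a $t$-weighted mixed formulation in $H^1_0(\Omega)\times\HDiv{\Omega}$ (respectively $\HDivz{\Omega}$), coercivity of $\vdual{\cCinv\TTheta}{\DeltaTheta}+t^2\vdual{\Div\TTheta}{\Div\DeltaTheta}$, the divergence inf--sup, and a separate $H^2$-coercivity argument at $t=0$. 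You yourself flag this as ``the main obstacle'' but do not supply the argument, so the core of the proof is missing; and even granting it, solving the strong problem \eqref{prob} only yields existence of an ultraweak solution with the stated bound, while uniqueness in the (strictly larger, low-regularity) space $\U{a}(\cT,t)$ would still require the inf--sup or an injectivity argument for $B_t$ that your proposal does not contain.
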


A proof of this theorem is given in Section~\ref{sec_pf}.
A consequence of Theorem~\ref{thm_stab} is that the solution of \eqref{VF} converges weakly to
the solution of the corresponding Kirchhoff--Love formulation when $t\to 0$.

\begin{theorem} \label{thm_limit}
Let $a\in\{c,s\}$, and let $f_t\in L_2(\Omega)$ for $t\in [0,1]$ with $f_t\to f_0$ in $L_2(\Omega)$ when $t\to 0$.
Furthermore, for $t\in [0,1]$, let $(u_t,\MM_t,\btheta_t,\tq_t)\in \U{a}(\cT,t)$ be the solution of \eqref{VF}.
It holds
\[
   (u_t,\MM_t) \rightharpoonup (u_0,\MM_0)
   \quad\text{in}\quad L_2(\Omega)\times \LL_2^s(\Omega)
   \quad (t\to 0),
\]
\[
   \div\Div\MM_t \to \div\Div\MM_0
   \quad\text{in}\quad L_2(\Omega),
\]
and
\[
   \dualRM{\tq_t}{(z,\TTheta,\btau)}{\cS,t} \to \dualRM{\tq_0}{(z,\TTheta)}{\cS,0} \quad (t\to 0)
\]
for any $(z,\TTheta,\btau)\in V(\cT,1)\cap V(\cT,0)$.
\end{theorem}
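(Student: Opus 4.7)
The strategy is to invoke Theorem~\ref{thm_stab} for uniform $t$-independent bounds, extract weakly convergent subsequences, pass to the limit in~\eqref{VF} using admissible test functions drawn from the intersection $V(\cT,1)\cap V(\cT,0)$, and close by uniqueness of the Kirchhoff--Love solution. Since $f_t\to f_0$ in $L_2(\Omega)$, Theorem~\ref{thm_stab} yields $\|u_t\|+\|\MM_t\|+\sqrt{t}\,\|\btheta_t\|+\|\tq_t\|_{V(\cT,t)'}\lesssim 1$ uniformly in $t\in[0,1]$. For any sequence $t_k\to 0$, weak compactness extracts a subsequence (not relabelled) with $u_{t_k}\rightharpoonup u^*$ in $L_2(\Omega)$ and $\MM_{t_k}\rightharpoonup \MM^*$ in $\LL_2^s(\Omega)$.

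The strong $L_2$-convergence $\div\Div\MM_t\to\div\Div\MM_0$ is essentially free: by Theorem~\ref{thm_stab}, $(u_t,\MM_t,\btheta_t)$ satisfies the strong system~\eqref{prob}, so combining~\eqref{p1} with~\eqref{p3} gives $-\div\Div\MM_t=f_t$ in $L_2(\Omega)$ for every $t\in[0,1]$, whence $\div\Div\MM_t=-f_t\to -f_0=\div\Div\MM_0$ in $L_2(\Omega)$.

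For the other two claims I return to~\eqref{VF}. A direct term-by-term estimate on~\eqref{norm_VT} shows
\(\|(z,\TTheta,\btau)\|_{V(\cT,t)}\lesssim\|(z,\TTheta,\btau)\|_{V(\cT,0)}+\|(z,\TTheta,\btau)\|_{V(\cT,1)}\)
uniformly in $t\in[0,1]$, so every $(z,\TTheta,\btau)\in V(\cT,1)\cap V(\cT,0)$ is admissible in~\eqref{VF} for all $t$. Solving~\eqref{VF} for the trace pairing and expanding $b_{t_k}$ into its $t_k$-free, $t_k$- and $t_k^2$-weighted parts, the leading terms close by weak convergence of $u_{t_k}$ and $\MM_{t_k}$ against $t$-independent $L_2$ data, while the cross-terms vanish thanks to the uniform bounds on $u_{t_k}$, $\MM_{t_k}$, and $\sqrt{t_k}\,\btheta_{t_k}$ (the $\btheta$-pairing being split as $t_k=\sqrt{t_k}\cdot\sqrt{t_k}$). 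The outcome is
\[
\dualRM{\tq_{t_k}}{(z,\TTheta,\btau)}{\cS,t_k}\to \vdual{u^*}{\div\Div\TTheta}_\cT+\vdual{\MM^*}{\cCinv\TTheta+\Grad\grad z}_\cT+\vdual{f_0}{z},
\]
a limit manifestly independent of $\btau$. Because smooth test functions lie in $V(\cT,1)\cap V(\cT,0)$ and are dense in $V(\cT,0)$ by construction, the right-hand side extends to a bounded functional $\dualRM{\tq^*}{(z,\TTheta)}{\cS,0}$ on $V(\cT,0)$ such that $(u^*,\MM^*,0,\tq^*)$ solves the $t=0$ instance of~\eqref{VF}. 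Uniqueness from Theorem~\ref{thm_stab} at $t=0$ forces $(u^*,\MM^*,\tq^*)=(u_0,\MM_0,\tq_0)$, and since every sequence $t_k\to 0$ admits such a subsequential limit with the same value, the full sequences converge.

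The main technical obstacle is the bookkeeping in the third step: establishing the uniform $V(\cT,t)$-norm comparison and verifying that each $t_k$-weighted cross-term in $b_{t_k}$ carries exactly enough positive power of $t_k$ to be absorbed by the uniform bounds (in particular, the $t^2$-coupling between $\MM$ and $\Div\TTheta$ and the $\sqrt{t}$-splitting of the $\btheta$-pairing are essential). Once these are in place, the fact that the limiting trace lies in $\HRM{a}(\cS,0)$ and coincides with $\tq_0$ follows automatically from uniqueness of the Kirchhoff--Love solution.
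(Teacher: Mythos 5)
Your overall strategy (uniform bounds from Theorem~\ref{thm_stab}, subsequence extraction, passing to the limit in \eqref{VF}, identification by uniqueness) is the same as the paper's, and your treatment of $\div\Div\MM_t\to\div\Div\MM_0$ and of the limit of the trace pairings is correct as far as it goes. But there is a genuine gap at the very point where you declare the argument closes ``automatically'': to invoke uniqueness of the Kirchhoff--Love solution you must first know that the limit $(u^*,\MM^*,\tq^*)$ belongs to $\U{a}(\cT,0)$, i.e.\ that $\tq^*\in\HRM{a}(\cS,0)$, which is equivalent to $(u^*,\MM^*)\in\U{a}(0)$ — that is, to the \emph{homogeneous boundary conditions} $u^*\in H^2_0(\Omega)$ (clamped) or $u^*\in H^2_s(\Omega)$, $\MM^*\in\HdDivz{\Omega}$ (simple support). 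Uniqueness in Theorem~\ref{thm_stab} holds only within that space, so it cannot be used to establish membership in it; as stated, your argument is circular. The interior equations you obtain from testing with functions in $V(\cT,1)\cap V(\cT,0)$ (or smooth compactly supported ones) only give $u^*\in H^2(\Omega)$, $\MM^*\in\HdDiv{\Omega}$, $-\div\Div\MM^*=f_0$, $\MM^*+\cC\Grad\grad u^*=0$; since the clamped and simply supported problems share exactly these interior equations, without the boundary-condition step nothing identifies which solution the limit is.

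This missing step is where the real work of the paper's proof lies. For $a=c$ one tests with $(0,\TTheta,0)$, $\TTheta\in\DDs(\bar\Omega)$ smooth up to $\Gamma$, uses Lemma~\ref{la_tr} and the boundary conditions of $(u_{t_n},\MM_{t_n})$ to reduce the trace pairing to $-t_n^2\dual{\MM_{t_n}\nn}{\Div\TTheta}_\Gamma$, and needs the extra estimate $t_n\|\Div\MM_{t_n}\|\lesssim 1$ — which is \emph{not} contained in the $U(\cT,t)$-norm bound of Theorem~\ref{thm_stab} but comes from Corollary~\ref{cor_adj_well} — to conclude $\dual{\traceGG{}(u^*)}{\TTheta}_\cS=0$ and hence $u^*\in H^2_0(\Omega)$. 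For $a=s$ one similarly needs test tensors in $\HDiv{\Omega}\cap\HdDivz{\Omega}$, Corollary~\ref{cor_jump0} and a density argument to get $u^*\in H^2_s(\Omega)$, and test functions $z\in H^2_s(\Omega)$ to get $\MM^*\in\HdDivz{\Omega}$. Your proposal contains neither this idea nor the auxiliary bound, so it does not yet yield the theorem; the bookkeeping of $t$-powers that you flag as the main obstacle is in fact the routine part.
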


We prove this statement in Section~\ref{sec_pf_limit}.

Now, to invoke the DPG method, we consider a (family of) discrete subspace(s)
$\U{a,h}(\cT,t)\subset \U{a}(\cT,t)$ ($a=c$ or $a=s$, depending on the boundary condition),
and define the \emph{trial-to-test operator} $\ttt_t:\;\U{a}(\cT,t)\to V(\cT,t)$ by
\begin{equation} \label{ttt}
   \ip{\ttt_t(\uu)}{\bv}_{V(\cT,t)} = b_t(\uu,\bv)\quad\forall\bv\in V(\cT,t).
\end{equation}
Then, the DPG method with optimal test functions for problem~\eqref{prob} (and based
on the variational formulation~\eqref{VF}) is:
\emph{Find $\uu_h\in \U{a,h}(\cT,t)$ such that}
\begin{align} \label{DPG}
   b_t(\uu_h,\ttt_t\bdeltau) = L(\ttt_t\bdeltau) \quad\forall\bdeltau\in \U{a,h}(\cT,t).
\end{align}
This discretization scheme is a minimum residual method. Defining the operator
$B_t:\;\U{a}(\cT,t)\to V(\cT,t)'$ by $B_t(\uu)(\bv):=b_t(\uu,\bv)$,
the DPG scheme delivers the best approximation with respect to the
so-called \emph{energy norm} $\|\cdot\|_{E(\cT,t)}:=\|B_t(\cdot)\|_{V(\cT,t)'}$,
cf., e.g.,~\cite{DemkowiczG_11_ADM}.

Our second main result is the uniform quasi-optimal convergence of the DPG scheme \eqref{DPG}
in the $U(\cT,t)$-norm.

\begin{theorem} \label{thm_DPG}
Let $a\in\{c,s\}$, $f\in L_2(\Omega)$ and $t\in [0,1]$ be given.
For any finite-dimensional subspace $\U{a,h}(\cT,t)\subset \U{a}(\cT,t)$
there exists a unique solution $\uu_h\in \U{a,h}(\cT,t)$ to \eqref{DPG}. It satisfies the quasi-optimal
error estimate
\[
   \|\uu-\uu_h\|_{U(\cT,t)} \lesssim \|\uu-\bw\|_{U(\cT,t)}
   \quad\forall\bw\in \U{a,h}(\cT,t)
\]
with a hidden constant that is independent of $f$, $\cT$, $t\in [0,1]$, and $\U{a,h}(\cT,t)$.
\end{theorem}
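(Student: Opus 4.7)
The plan is to apply the standard DPG abstract framework, reducing the statement to a uniform inf-sup condition and a uniform continuity bound for the bilinear form $b_t$ on $\U{a}(\cT,t)\times V(\cT,t)$. The key input provided by the previous theorems is Theorem~\ref{thm_stab}, which gives uniform well-posedness of the continuous problem.

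First, I would verify uniform continuity of $b_t$, namely
\[
   |b_t(\uu,\bv)| \lesssim \|\uu\|_{U(\cT,t)} \|\bv\|_{V(\cT,t)}
   \quad\forall \uu\in\U{a}(\cT,t),\ \bv\in V(\cT,t),
\]
uniformly in $t\in[0,1]$. Inspecting the four summands in \eqref{b}, each volume term pairs a component of $\uu$ with a piecewise differential expression whose norm is exactly controlled by $\|\bv\|_{V(\cT,t)}$; the $t$-weight in front of $\btheta$ in \eqref{norm_U} is the square-root of the $t$-weight appearing in the corresponding test factor $(\btau-\pwgrad z)$, so Cauchy--Schwarz yields uniform boundedness. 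The skeleton term is uniformly bounded by definition of $\|\tq\|_{V(\cT,t)'}$, which is the norm used in $\|\uu\|_{U(\cT,t)}$ via Proposition~\ref{prop_tr_norms} (and Proposition~\ref{prop_trRM0_norms} for $t=0$).

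Second, I would establish the uniform inf-sup condition
\[
   \|\uu\|_{U(\cT,t)} \lesssim \sup_{0\neq\bv\in V(\cT,t)}
       \frac{b_t(\uu,\bv)}{\|\bv\|_{V(\cT,t)}}
   \quad\forall \uu\in\U{a}(\cT,t),
\]
with constant independent of $t\in[0,1]$ and the mesh $\cT$. This is exactly the statement that $B_t$ is bounded below into $V(\cT,t)'$ uniformly; it is the abstract content of Theorem~\ref{thm_stab}, since existence of a uniformly bounded solution for arbitrary $L_2$-load is equivalent (by a density/duality argument together with the product-space structure of the test norm) to this inf-sup bound. Combined with the density of smooth test functions and the fact that the kernel of $B_t^\ast$ is trivial (also contained in Theorem~\ref{thm_stab}), this yields the required lower bound.

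With these two ingredients the standard DPG argument \cite{DemkowiczG_11_CDP,DemkowiczG_11_ADM} applies. The trial-to-test operator $\ttt_t$ is well-defined and bounded by Riesz representation, and \eqref{DPG} is equivalent to minimizing the residual $\|B_t\uu_h - L\|_{V(\cT,t)'}$, so $\uu_h$ is the best approximation of $\uu$ in the energy norm $\|\cdot\|_{E(\cT,t)} = \|B_t(\cdot)\|_{V(\cT,t)'}$. The continuity and inf-sup bounds above give
\[
   \|\uu\|_{E(\cT,t)} \simeq \|\uu\|_{U(\cT,t)}\quad \forall \uu\in\U{a}(\cT,t),
\]
with equivalence constants independent of $t$ and $\cT$. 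Hence for any $\bw\in\U{a,h}(\cT,t)$,
\[
   \|\uu-\uu_h\|_{U(\cT,t)} \lesssim \|\uu-\uu_h\|_{E(\cT,t)}
   \le \|\uu-\bw\|_{E(\cT,t)} \lesssim \|\uu-\bw\|_{U(\cT,t)},
\]
and existence and uniqueness of $\uu_h$ follow because the system matrix associated with \eqref{DPG} is the Gram matrix of $\{B_t\bdeltau\}$ in $V(\cT,t)'$, which is positive definite on the finite-dimensional trial space. The main obstacle is the careful tracking of the $t$-weights through the two equivalence constants; since all weights are already embedded in the definitions \eqref{norm_VT} and \eqref{norm_U}, this amounts to a bookkeeping exercise once Theorem~\ref{thm_stab} is in hand.
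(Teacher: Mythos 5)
Your overall route is the same as the paper's: the DPG solution is the best approximation in the energy norm $\|\cdot\|_{E(\cT,t)}=\|B_t(\cdot)\|_{V(\cT,t)'}$, and unique solvability plus quasi-optimality follow once this norm is shown to be uniformly equivalent to $\|\cdot\|_{U(\cT,t)}$, the upper bound being the uniform continuity of $b_t$ and the lower bound the uniform inf-sup condition \eqref{infsup}. Your handling of the continuity, of the Gram-matrix argument for the discrete system, and of the final chain of inequalities is correct.

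The weak point is your justification of the inf-sup condition. You claim it is ``the abstract content of Theorem~\ref{thm_stab}'', i.e., that unique solvability with the uniform bound $\|\uu\|_{U(\cT,t)}\lesssim\|f\|$ for every $f\in L_2(\Omega)$ is equivalent, by a density/duality argument, to $\|\uu\|_{U(\cT,t)}\lesssim\sup_{0\neq\bv\in V(\cT,t)}b_t(\uu,\bv)/\|\bv\|_{V(\cT,t)}$. This deduction fails: the load functionals $(z,\TTheta,\btau)\mapsto-\vdual{f}{z}$ with $f\in L_2(\Omega)$ act only on the first test component and form a proper, non-dense subspace of $V(\cT,t)'$, so well-posedness for these special right-hand sides gives injectivity of $B_t$ but says nothing about $B_t$ being bounded below (equivalently, nothing about surjectivity of $B_t$ onto all of $V(\cT,t)'$). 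In the paper, \eqref{infsup} is not a consequence of the statement of Theorem~\ref{thm_stab}; it is established inside its proof via the framework of \cite{CarstensenDG_16_BSF}: the adjoint-problem stability of Lemma~\ref{la_adj_well} yields \eqref{infsup1}, the trace-norm identities of Propositions~\ref{prop_tr_norms} and~\ref{prop_trRM0_norms} yield \eqref{infsup2}, and Propositions~\ref{prop_jump} and~\ref{prop_jump0} identify $\U{a}(t)$ as the kernel of the jump functionals, all uniformly in $t\in[0,1]$. Your argument becomes complete, and then coincides with the paper's, once the density/duality claim is replaced by a direct appeal to (or reproduction of) that inf-sup proof.
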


A proof of this theorem is given in Section~\ref{sec_pf}.

%===================================================================================================
\section{Inf-sup conditions and proofs of Theorems~\ref{thm_stab},~\ref{thm_limit},~\ref{thm_DPG}}
\label{sec_adj}

The proof of Theorem~\ref{thm_stab} follows standard techniques for mixed formulations.
In the context of product (or ``broken'') test spaces, the literature offers three variants.
Initially the whole adjoint problem was analyzed by subdividing it into one without jumps
and a homogeneous one with jump data. Showing stability of the latter one requires to
construct a Helmholtz decomposition, cf., e.g.,~\cite{DemkowiczG_11_ADM}.
Another technique is to analyze the adjoint problem as a whole in the form of
a mixed problem but without Lagrangian multiplier, cf.~\cite{FuehrerHN_19_UFK}.
Here, we follow the strategy from Carstensen \emph{et al.} \cite{CarstensenDG_16_BSF}
where the first approach of splitting the adjoint problem has been analyzed in an abstract
way, thus avoiding the construction of a Helmholtz decomposition.
Still, one of the main ingredients is to prove stability of the adjoint problem without jumps.
In our case, taking the $t$-weighting in the $U(\cT,t)$-norm into account (cf.~\eqref{norm_U}),
it reads as follows.
\emph{Find $(z,\TTheta,\btau)\in \U{a}(t)$ such that}
\begin{subequations} \label{adj}
\begin{alignat}{2}
   \div(\Div\TTheta+t(\btau-\grad z))     &= g    && \ \in L_2(\Omega)\label{ar1},\\
    \cCinv\TTheta+\Grad(\grad z-t^2\Div\TTheta) &= \HH  && \ \in \LL_2^s(\Omega)\label{ar2},\\
    t^{1/2}(\btau-\grad z)                      &= \bxi && \ \in \bL_2(\Omega)\label{ar3}.
\end{alignat}
\end{subequations}
We show that this problem is well posed.

\begin{lemma} \label{la_adj_well}
Let $a\in\{c,s\}$.
Assuming the compatibility $\bxi=0$ if $t=0$,
problem \eqref{adj} is uniformly well posed for $t\in [0,1]$. Its solution is bounded like
\[
   \|(z,\TTheta,\btau)\|_{U(t)} \lesssim \|g\| + \|\HH\| + \|\bxi\|
\]
with a constant that is independent of $t\in [0,1]$. In the case $t=0$, this means that
its solution is unique in the quotient space $\U{a}(0)$ with bound
\[
   \|(z,\TTheta)\|_{U(0)} \lesssim \|g\| + \|\HH\|.
\]
\end{lemma}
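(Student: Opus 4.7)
The strategy is to reduce \eqref{adj}, for $t>0$, to the classical primal Reissner--Mindlin formulation in the variables $(z,\bpsi)$ with $\bpsi:=\grad z-t^2\Div\TTheta$, apply Lax--Milgram on a suitable subspace, and recover $(\TTheta,\btau)$ a posteriori. Equation \eqref{ar3} gives $t(\btau-\grad z)=t^{1/2}\bxi$, so \eqref{ar1} reads $\div(\Div\TTheta+t^{1/2}\bxi)=g$, while \eqref{ar2} is $\cCinv\TTheta+\Grad\bpsi=\HH$, i.e.\ $\TTheta=\cC(\HH-\Grad\bpsi)$. Substituting and testing against $(\deltaz,\delta\bpsi)$ in $V_a$, with $V_c:=H^1_0(\Omega)\times\bH^1_0(\Omega)$ (clamped) and $V_s:=H^1_0(\Omega)\times\bH^1(\Omega)$ (simple support), one obtains the primal problem
\[
  \vdual{\cC\Grad\bpsi}{\Grad\delta\bpsi}+t^{-2}\vdual{\grad z-\bpsi}{\grad\deltaz-\delta\bpsi}
  = \vdual{\cC\HH}{\Grad\delta\bpsi}-\vdual{g}{\deltaz}-t^{1/2}\vdual{\bxi}{\grad\deltaz}.
\]
In the simple-support case the natural boundary condition $\TTheta\nn=0$ will emerge a posteriori from test functions $\delta\bpsi$ of arbitrary trace.

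The bilinear form on the left is bounded and symmetric; uniform coercivity in $t\in(0,1]$ is the crucial point. Testing with $(z,\bpsi)$ produces $\|\cC^{1/2}\Grad\bpsi\|^2+t^{-2}\|\grad z-\bpsi\|^2$. For $a=c$, Korn--Poincar\'e on $\bH^1_0(\Omega)$ gives $\|\bpsi\|_{\bH^1}\lesssim\|\Grad\bpsi\|$, whence $\|\grad z\|\lesssim\|\Grad\bpsi\|+\|\grad z-\bpsi\|$. For $a=s$, decompose $\bpsi=\bpsi_0+\mathbf{r}$ orthogonally in $\bL_2(\Omega)$ with $\mathbf{r}\in\ker\Grad$ and $\bpsi_0\perp\ker\Grad$: Korn on the quotient delivers $\|\bpsi_0\|_{\bH^1}\lesssim\|\Grad\bpsi\|$, and testing $\bpsi$ against any rigid motion $\mathbf{r}'\in\ker\Grad$, together with $z\in H^1_0(\Omega)$ and $\div\mathbf{r}'=0$, yields $\vdual{\grad z}{\mathbf{r}'}=0$ and hence $\|\mathbf{r}\|\lesssim\|\grad z-\bpsi\|$. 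Cauchy--Schwarz and Young's inequality on the right-hand side (absorbing small multiples of the coercive quantity, using $t^{-2}\ge 1$) give
\[
  \|\Grad\bpsi\|^2+t^{-2}\|\grad z-\bpsi\|^2+\|\grad z\|^2\lesssim\|g\|^2+\|\HH\|^2+t\|\bxi\|^2,
\]
and Lax--Milgram produces a unique $(z,\bpsi)\in V_a$.

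Setting $\TTheta:=\cC(\HH-\Grad\bpsi)$ and $\btau:=\grad z+t^{-1/2}\bxi$ reverses the elimination. By construction $t^2\Div\TTheta=\grad z-\bpsi\in\bH^1(\Omega)$, equations \eqref{ar1}--\eqref{ar3} hold, and the boundary conditions defining $\U{a}(t)$ are met: for $a=s$, testing the weak formulation with an arbitrary $\delta\bpsi\in\bH^1(\Omega)$ and integrating by parts produces $\dual{\TTheta\nn}{\delta\bpsi}_\Gamma=0$, i.e.~$\TTheta\nn=0$ on $\Gamma$. Combining the estimates above with $\|\TTheta\|\lesssim\|\HH\|+\|\Grad\bpsi\|$, $t\|\btau\|^2\lesssim t\|\grad z\|^2+\|\bxi\|^2$, Poincar\'e $\|z\|\lesssim\|\grad z\|$, $\|\Grad(\grad z-t^2\Div\TTheta)\|=\|\Grad\bpsi\|$ and $\|\div(\Div\TTheta+t(\btau-\grad z))\|=\|g\|$ yields the asserted $U(t)$-bound uniformly for $t\in(0,1]$.

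For $t=0$, the compatibility $\bxi=0$ reduces \eqref{adj} to the adjoint Kirchhoff--Love problem $\div\Div\TTheta=g$, $\cCinv\TTheta+\Grad\grad z=\HH$, to be solved in $\U{a}(0)$. For $a=c$ this is the setting of \cite{FuehrerHN_19_UFK}; for $a=s$ the analogous argument works in $\U{s}(0)=H^2_s(\Omega)\times\HdDivz{\Omega}$, substituting $\TTheta=\cC(\HH-\Grad\grad z)$, solving the resulting biharmonic problem for $z\in H^2_s(\Omega)$ by Lax--Milgram, and then verifying $\TTheta\in\HdDivz{\Omega}$ via definition \eqref{HdDivz_def} and the trace identifications of \S\ref{sec_jumps_KLove}. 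The main obstacle is the simple-support coercivity step: $\bpsi$ has no essential boundary condition, so Korn alone fails to control its $\bL_2$-norm, and the uniformity in $t$ rests on the penalty $t^{-2}\|\grad z-\bpsi\|^2$ together with $z=0$ on $\Gamma$ pinning down the rigid-body component of $\bpsi$ independently of $t$.
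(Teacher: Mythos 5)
Your proposal is correct, but for $t>0$ it takes a genuinely different route from the paper. You eliminate $\TTheta=\cC(\HH-\Grad\bpsi)$ and analyze the classical primal (deflection--rotation) formulation in $(z,\bpsi)\in H^1_0(\Omega)\times\bH^1_0(\Omega)$ (clamped) or $H^1_0(\Omega)\times\bH^1(\Omega)$ (simple support) with the penalty term $t^{-2}\|\grad z-\bpsi\|^2$, proving $t$-uniform coercivity via Korn's inequality plus, for $a=s$, the observation that rigid motions are divergence-free so that $z\in H^1_0(\Omega)$ pins down the rigid-body component of $\bpsi$ through the penalty term; the condition $\TTheta\nn=0$ is then recovered a posteriori as a natural boundary condition, and the closing recovery $\btau=\grad z+t^{-1/2}\bxi$ and accounting of the $U(t)$-norm terms agree with the paper. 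The paper instead keeps $\TTheta$ as an unknown and treats the dual mixed problem \eqref{adj_weak} in $H^1_0(\Omega)\times\HDiv{\Omega}$ (resp.\ $\HDivz{\Omega}$) with Brezzi theory: the form $\vdual{\cCinv\TTheta}{\DeltaTheta}+t^2\vdual{\Div\TTheta}{\Div\DeltaTheta}$ is uniformly coercive on the whole space in the norm $\bigl(\|\cdot\|^2+t^2\|\Div\cdot\|^2\bigr)^{1/2}$, and the inf-sup condition for $\vdual{\Div\TTheta}{\grad\deltaz}$ follows from surjectivity of $\div\Div$ onto $(H^1_0(\Omega))'$, which only improves when the weighted (weaker) norm is used. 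What each approach buys: yours needs only Lax--Milgram and is closest to the standard Reissner--Mindlin displacement analysis, at the price of the Korn/rigid-body bookkeeping in the simply supported case (exactly where, as you note, uniformity in $t$ is delicate); the paper's mixed route avoids Korn entirely, handles both boundary conditions by merely switching $\HDiv{\Omega}\leftrightarrow\HDivz{\Omega}$, and directly records the bound \eqref{bound_tDiv} on $t^2\|\Div\TTheta\|^2$ that is reused in Corollary~\ref{cor_adj_well} --- though your energy estimate carries the same information, since $t^{-2}\|\grad z-\bpsi\|^2=t^2\|\Div\TTheta\|^2$. For $t=0$ your argument (eliminate $\TTheta$, coercive fourth-order problem in $H^2_0(\Omega)$ or $H^2_s(\Omega)$, then read $\TTheta\in\HdDivz{\Omega}$ off the natural boundary condition) coincides with the paper's. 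Two points you leave implicit are at the same level of detail as the paper itself: the equivalence of \eqref{adj} with the primal problem for elements of $\U{a}(t)$ (hence uniqueness) rests on the regularity of Lemma~\ref{la_reg}, and the membership of the reconstructed triple in $\U{a}(t)$ is asserted from the established regularity and boundary conditions.
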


\begin{proof}
Let $t\in (0,1]$.
Recall that $(z,\TTheta,\btau)\in \U{a}(t)$ implies that
$\TTheta\in\HDiv{\Omega}$, $\grad z-t^2\Div\TTheta\in\bH^1_0(\Omega)$ if $a=c$, and
$\TTheta\in\HDivz{\Omega}$, $\grad z-t^2\Div\TTheta\in\bH^1(\Omega)$ if $a=s$, cf.~Lemma~\ref{la_reg}.

Therefore, testing \eqref{ar1} by $\deltaz\in H^1_0(\Omega)$ ($a\in\{c,s\}$),
\eqref{ar2} by $\DeltaTheta\in\HDiv{\Omega}$ ($a=c$) or $\DeltaTheta\in\HDivz{\Omega}$ ($a=s$),
and replacing $t(\btau-\grad z)=t^{1/2}\bxi$,
integration by parts yields the following variational formulation of \eqref{adj}.
\emph{Find $(z,\TTheta)\in H^1_0(\Omega)\times\HDiv{\Omega}$ such that
\begin{align} \label{adj_weak}
   \vdual{\cCinv\TTheta}{\DeltaTheta} + t^2 \vdual{\Div\TTheta}{\Div\DeltaTheta}
   -&\vdual{\grad z}{\Div\DeltaTheta} - \vdual{\Div\TTheta}{\grad\deltaz}
   \nonumber\\
   &=
   \vdual{g}{\deltaz} + \vdual{\HH}{\DeltaTheta} + t^{1/2}\vdual{\bxi}{\grad\deltaz}
\end{align}
for any $(\deltaz,\DeltaTheta)\in H^1_0(\Omega)\times\HDiv{\Omega}$}
if $a=c$, and the same except for replacing $\HDiv{\Omega}$ by $\HDivz{\Omega}$ if $a=s$.

Let us show that this formulation is well posed and that its solution gives rise to the solution of
\eqref{adj}. Since $\cC$ induces a self-adjoint isomorphism
$\LL_2^s(\Omega)\to\LL_2^s(\Omega)$, the term
$\vdual{\cCinv\TTheta}{\DeltaTheta} + t^2 \vdual{\Div\TTheta}{\Div\DeltaTheta}$ gives rise
to a uniformly bounded and coercive bilinear form in $\HDiv{\Omega}\times\HDiv{\Omega}$
with norm $\bigl(\|\cdot\|^2+t^2\|\Div\cdot\|^2\bigr)^{1/2}$. Now,
\[
   \Div:\;\HDiv{\Omega} \to \bL_2(\Omega)\quad\text{and}\quad
   \div:\;\bL_2(\Omega)\to (H^1_0(\Omega))'
\]
are surjective operators and so is their composition.
The surjectivity of $\dDiv:\;\HDiv{\Omega}\to (H^1_0(\Omega))'$
is equivalent to an inf-sup condition
\[
   \sup_{0\not=\TTheta\in\HDiv{\Omega}}
   \frac{\vdual{\Div\TTheta}{\grad\deltaz}}
	{\bigl(\|\TTheta\|^2 + \|\Div\TTheta\|^2\bigr)^{1/2}}
   \gtrsim
   \|\grad\deltaz\| \quad \forall\deltaz\in H^1_0(\Omega).
\]
Therefore, also the weaker estimate
\[
   \sup_{0\not=\TTheta\in\HDiv{\Omega}}
   \frac{\vdual{\Div\TTheta}{\grad\deltaz}}
	{\bigl(\|\TTheta\|^2 + t^2\|\Div\TTheta\|^2\bigr)^{1/2}}
   \gtrsim
   \|\grad\deltaz\| \quad \forall\deltaz\in H^1_0(\Omega)
\]
holds, with an implicit constant that is independent of $t\in (0,1]$.
Using the theory of mixed formulations we conclude that problem \eqref{adj_weak} is well posed
and that its solution is bounded like
\begin{align} \label{bound_tDiv}
   \|\TTheta\|^2 + t^2\|\Div\TTheta\|^2 + \|z\|^2 + t\|\grad z\|^2 \lesssim \|g\|^2 + \|\HH\|^2 + \|\bxi\|^2,
\end{align}
uniformly for $t\in (0,1]$.

Now, defining $\btau:=\grad z+t^{-1/2}\bxi$, $(z,\TTheta,\btau)$ is the unique solution of \eqref{adj}.
Indeed, \eqref{ar3} is satisfied by selection of $\btau$, and \eqref{ar1} holds as can be seen by
choosing $\DeltaTheta=0$ in \eqref{adj_weak} and replacing $\bxi=t^{1/2}(\btau-\grad z)$.
Finally, setting $\deltaz=0$ in \eqref{adj_weak} shows that \eqref{ar2} holds and, in particular,
$\grad z-t^2\Div\TTheta\in\bH^1_0(\Omega)$ if $a=c$ and
$\grad z-t^2\Div\TTheta\in\bH^1(\Omega)$ if $a=s$. It follows that $(z,\TTheta,\btau)\in \U{a}(t)$.

We bound the remaining terms,
\begin{alignat*}{2}
   &t^{1/2}\|\btau\| \le t^{1/2}\|\grad z\| + \|\bxi\|  \qquad&& \text{(by \eqref{ar3})},\\
   &\|\Grad(\grad z-t^2\Div\TTheta)\| \lesssim \|\HH\| + \|\TTheta\| \qquad&& \text{(by \eqref{ar2})},\\
   &\|\div(\Div\TTheta+t(\btau-\grad z))\| = \|\bg\|            \qquad&& \text{(by \eqref{ar1})}.
\end{alignat*}
This proves the statement for positive $t$. In the case that $t=0$ and $\bxi=0$, \eqref{adj} reads
\[
   \div\Div\TTheta = g,\qquad \cCinv\TTheta+\Grad\grad z = \HH.
\]
Eliminating $\TTheta$, it becomes $\div\Div\cC\Grad\grad z = \div\Div\cC\HH-g$, in weak form
\begin{equation} \label{adj0_prob_z}
   z\in H^2_m(\Omega):\quad
   \vdual{\cC\Grad\grad z}{\Grad\grad\deltaz} = \vdual{\cC\HH}{\Grad\grad\deltaz} - \vdual{g}{\deltaz}
   \quad\forall\deltaz\in H^2_m(\Omega)
\end{equation}
with $m=0$ if $a=c$ and $m=s$ if $a=s$ (recall that $H^2_s(\Omega)=H^2(\Omega)\cap H^1_0(\Omega)$).
Note that in the case $a=s$, this formulation includes the natural boundary condition
$\nn\cdot\cC(\Grad\grad z-\HH)\nn=0$ on $\Gamma$, that is,
$\cC(\Grad\grad z-\HH)\in\HdDivz{\Omega}$.
%It is well defined since $\cC(\HH-\Grad\grad z)=\TTheta\in\HdDiv{\Omega}$, cf.~\eqref{HdDivz_def}.

Problem \eqref{adj0_prob_z} has a unique solution since the bilinear form is coercive both
on $H^2_0(\Omega)$ and $H^2_s(\Omega)$. It holds the bound $\|z\|_2\lesssim \|\HH\| + \|g\|$.
The formulation also shows that $\TTheta=\cC(\HH-\Grad\grad z)\in\HdDiv{\Omega}$
($\HdDivz{\Omega}$ if $a=s$) with $\div\Div\TTheta=g$.
Recalling relation \eqref{spaces_t0} for $\U{a}(0)$, we therefore obtain a unique solution
$(z,\TTheta)\in\U{a}(0)$ of \eqref{adj} for $t=0$ with
\[
   \|(z,\TTheta)\|_{U(0)} = \bigl(\|z\|_2^2 + \|\TTheta\|_\dDiv^2\bigr)^{1/2} \lesssim \|g\| + \|\HH\|.
\]
This finishes the proof.
\end{proof}

\begin{cor} \label{cor_adj_well}
Let $t>0$. The bound
\[
   t\|\Div\TTheta\|
   \lesssim \|(z,\TTheta,\btau)\|_{U(t)} \quad\forall (z,\TTheta,\btau)\in \U{a}(t)\quad (a\in\{c,s\})
\]
holds with a constant that is independent of $t\in (0,1]$.
\end{cor}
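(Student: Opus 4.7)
The approach is to recognize the corollary as a direct consequence of the a priori estimate already established inside the proof of Lemma~\ref{la_adj_well}, namely bound \eqref{bound_tDiv}. The idea is that an arbitrary element of $\U{a}(t)$ automatically solves the adjoint system \eqref{adj} with self-generated right-hand sides, and those right-hand sides are manifestly controlled by $\|\cdot\|_{U(t)}$.

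Concretely, given $(z,\TTheta,\btau)\in \U{a}(t)$ with $t\in (0,1]$, I would define
\begin{align*}
   g &:= \div(\Div\TTheta+t(\btau-\grad z)),\\
   \HH &:= \cCinv\TTheta+\Grad(\grad z-t^2\Div\TTheta),\\
   \bxi &:= t^{1/2}(\btau-\grad z).
\end{align*}
By Lemma~\ref{la_reg} together with the definition of $\U{a}(t)$ these are legitimate elements of $L_2(\Omega)$, $\LL_2^s(\Omega)$, and $\bL_2(\Omega)$, and by construction $(z,\TTheta,\btau)$ solves \eqref{ar1}--\eqref{ar3} with data $(g,\HH,\bxi)$. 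Applying the quantitative estimate \eqref{bound_tDiv} from the proof of Lemma~\ref{la_adj_well} (whose hidden constant is $t$-independent on $(0,1]$) yields
\[
   t\|\Div\TTheta\|\lesssim \|g\|+\|\HH\|+\|\bxi\|.
\]

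It remains to bound each term on the right by $\|(z,\TTheta,\btau)\|_{U(t)}$. The term $\|g\|$ is exactly one of the summands in $\|(z,\TTheta,\btau)\|_{U(t)}^2$. For $\|\HH\|$ I would use the triangle inequality and the boundedness of $\cCinv$ on $\LL_2^s(\Omega)$ to get $\|\HH\|\lesssim \|\TTheta\|+\|\Grad(\grad z-t^2\Div\TTheta)\|$, both of which are components of the norm. For $\|\bxi\|$ the triangle inequality gives $\|\bxi\|\le t^{1/2}\|\btau\|+t^{1/2}\|\grad z\|$, and each summand is controlled because the squared norm $\|(z,\TTheta,\btau)\|_{U(t)}^2$ contains $t\|\btau\|^2$ and $t\|\grad z\|^2$.

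I do not expect a real obstacle here, since all the heavy lifting (the inf-sup analysis of the mixed problem for $\TTheta\in\HDiv{\Omega}$, resp.~$\HDivz{\Omega}$, which produces the factor $t^2$ on $\|\Div\TTheta\|^2$) has already been carried out in Lemma~\ref{la_adj_well}; the corollary simply repackages that estimate as a pointwise bound on arbitrary elements of $\U{a}(t)$. The only thing to verify carefully is that no constants pick up hidden $t$-dependence, which follows by tracking the argument in the proof of Lemma~\ref{la_adj_well} where uniformity in $t\in(0,1]$ was explicitly established.
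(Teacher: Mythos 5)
Your proposal is correct and follows essentially the same route as the paper: the paper's proof of the corollary likewise observes that any $(z,\TTheta,\btau)\in\U{a}(t)$ solves the adjoint system with its self-generated data and then applies the a priori bound \eqref{bound_tDiv} together with the triangle inequality (and boundedness of $\cCinv$) to dominate the data by $\|(z,\TTheta,\btau)\|_{U(t)}$, with all constants uniform in $t\in(0,1]$. No gaps.
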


\begin{proof}
This has been shown in the proof of Lemma~\ref{la_adj_well}.
We just need to apply the triangle inequality on the right-hand side of~\eqref{bound_tDiv}, giving
\begin{align*}
   t^2\|\Div\TTheta\|^2
   &\lesssim
   \|\div(\Div\TTheta+t(\btau-\grad z)\|^2 + \|\Grad(\grad z-t^2\Div\TTheta)\|^2 + \|\TTheta\|^2
   + t\|\btau\|^2 + t\|\grad z\|^2
   \\
   &\le
   \|(z,\TTheta,\btau)\|_{U(t)}^2.
\end{align*}
\end{proof}

Another ingredient to show well-posedness of \eqref{VF} is the injectivity of the adjoint operator
$B_t^*$. This is shown next.

\begin{lemma} \label{la_inj}
Let $a\in\{c,s\}$. For $t\in [0,1]$, the adjoint operator $B_t^*:\;V(\cT,t)\to \U{a}(\cT,t)'$ is injective.
\end{lemma}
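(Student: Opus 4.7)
The plan is to suppose $B_t^*(z,\TTheta,\btau) = 0$, i.e.\ $b_t(\uu;z,\TTheta,\btau) = 0$ for every $\uu = (u,\MM,\btheta,\tq) \in \U{a}(\cT,t)$, and then successively select the four components of $\uu$ to extract pointwise PDE information and conformity, arriving at the homogeneous version of the adjoint problem \eqref{adj}, to which Lemma~\ref{la_adj_well} applies.

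First I would test with $\uu=(u,0,0,0)$ for arbitrary $u\in L_2(\Omega)$, which by the definition \eqref{b} of $b_t$ forces
\[
   \div(\Div\TTheta + t(\btau - \pwgrad z)) = 0 \quad\text{piecewise on }\cT.
\]
Taking $\uu=(0,\MM,0,0)$ with arbitrary $\MM\in\LL_2^s(\Omega)$ yields
$\cCinv\TTheta + \pwGrad(\pwgrad z - t^2\pwDiv\TTheta) = 0$ piecewise, and, if $t>0$, testing with $\uu=(0,0,\btheta,0)$ for arbitrary $\btheta\in\bL_2(\Omega)$ gives $\btau = \pwgrad z$ piecewise. (For $t=0$ the third component is quotiented out and this step is vacuous.)

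Next I would extract conformity by testing in the trace slot. Taking $\uu = (0,0,0,\tq)$ with $\tq$ running over $\HRM{a}(\cS,t)$ yields
\[
   \dualRM{\traceRM{\cT,t}(\deltaz,\DeltaTheta,\deltatau)}{(z,\TTheta,\btau)}{\cS,t} = 0
   \quad \forall (\deltaz,\DeltaTheta,\deltatau)\in \U{a}(t),
\]
since every $\tq\in\HRM{a}(\cS,t)$ arises as such a trace. By Proposition~\ref{prop_jump} (for $t>0$) or Proposition~\ref{prop_jump0} (for $t=0$), this characterization of membership in $\U{a}(\cdot)$ gives $(z,\TTheta,\btau)\in\U{a}(t)$; in particular the piecewise differential identities above upgrade to identities on all of $\Omega$, and the boundary conditions \eqref{Uc_BC} or \eqref{Us_BC} are satisfied.

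Finally, the three identities now read: $\div(\Div\TTheta+t(\btau-\grad z))=0$, $\cCinv\TTheta+\Grad(\grad z-t^2\Div\TTheta)=0$, and $t^{1/2}(\btau-\grad z)=0$, which is exactly \eqref{adj} with right-hand side $(g,\HH,\bxi)=(0,0,0)$ (the compatibility $\bxi=0$ at $t=0$ holding trivially). Lemma~\ref{la_adj_well} then yields $(z,\TTheta,\btau)=0$ in $\U{a}(t)$ (or $(z,\TTheta)=0$ in the quotient $\U{a}(0)$ when $t=0$), hence $(z,\TTheta,\btau)=0$ in $V(\cT,t)$, proving injectivity. The argument is essentially an assembly once Proposition~\ref{prop_jump}/\ref{prop_jump0} and Lemma~\ref{la_adj_well} are available; the only point needing care is the quotient interpretation at $t=0$, where the $\btau$ slot carries no information and the conclusion must be read in $\U{a}(0)=H^2_m(\Omega)\times\HdDiv{\Omega}$ (with $m\in\{0,s\}$ according to $a$).
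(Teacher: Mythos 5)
Your proposal is correct and follows essentially the same route as the paper's proof: test with the trace slot and invoke Proposition~\ref{prop_jump} (resp.\ Proposition~\ref{prop_jump0}) to obtain $(z,\TTheta,\btau)\in \U{a}(t)$, read off the homogeneous adjoint system \eqref{adj}, and conclude with Lemma~\ref{la_adj_well}, with the quotient interpretation at $t=0$ handled as in the paper. The only nitpick is the final parenthetical: for $a=s$ the quotient space is $\U{s}(0)=H^2_s(\Omega)\times\HdDivz{\Omega}$, not $H^2_s(\Omega)\times\HdDiv{\Omega}$, but this does not affect the argument.
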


\begin{proof}
Let $(z,\TTheta,\btau)\in V(\cT,t)$ be such that
$b_t(\deltaz,\DeltaTheta,\deltatau,\deltaq;z,\TTheta,\btau)=0$ for any
$(\deltaz,\DeltaTheta,\deltatau,\deltaq)\in \U{a}(\cT,t)$.
Selecting $\deltaz=0$, $\DeltaTheta=0$, $\deltatau=0$ and $\deltaq\in\HRM{a}(\cS,t)$,
Proposition~\ref{prop_jump} (if $t>0$) and Proposition~\ref{prop_jump0} (if $t=0$) show that
$(z,\TTheta,\btau)\in \U{a}(t)$. It follows that $(z,\TTheta,\btau)$ solves
\begin{alignat*}{2}
    \div(\Div\TTheta+t(\btau-\grad z))    &= 0,\quad
    \cCinv\TTheta+\Grad(\grad z-t^2\Div\TTheta) = 0,\quad
     t^{1/2}(\btau-\grad z)                     = 0\quad\text{in}\ \Omega.
\end{alignat*}
This is problem \eqref{adj} with homogeneous data. By Lemma~\ref{la_adj_well}, $(z,\TTheta,\btau)=0$
(where $(0,0,\btau)=0$ is the null element of the quotient space $\U{a}(0)$ when $t=0$).
\end{proof}

%===================================================================================================
\subsection{Proofs of Theorems~\ref{thm_stab},~\ref{thm_DPG}} \label{sec_pf}

We are ready to prove our main results. We start with Theorem~\ref{thm_stab}.
To show the unique and stable solvability of \eqref{VF} it is enough to check the standard properties.
\begin{enumerate}
\item {\bf Boundedness of the functional.} This is immediate since, for $f\in L_2(\Omega)$, it holds
      $L(z)\le \|f\|\,\|z\|\le \|f\|\,\|(z,\TTheta,\btau)\|_{V(\cT,t)}$
      for any $(z,\TTheta,\btau)\in V(\cT,t)$ and $t\in [0,1]$.
\item {\bf Boundedness of the bilinear form.} 
      The bound $b(\uu,\bv)\lesssim \|\uu\|_{U(\cT,t)} \|\bv\|_{V(\cT,t)}$ for all $\uu\in \U{a}(\cT,t)$ and
      $\bv\in V(\cT,t)$ is uniform for $\cT$ and $t\in [0,1]$ due to the selection of norms in
      both spaces.
\item {\bf Injectivity.} In Lemma~\ref{la_inj} we have seen that the adjoint operator
      of $B_t^*:\;V(\cT,t)\to \U{a}(\cT,t)'$ is injective for any $t\in [0,1]$.
\item {\bf Inf-sup condition.} We have to show that
\begin{align} \label{infsup}
   \sup_{0\not=(z,\TTheta,\btau)\in V(\cT,t)}
   \frac {b_t(u,\MM,\btheta,\tq;z,\TTheta,\btau)}{\|z,\TTheta,\btau\|_{V(\cT,t)}}
   \gtrsim \|(u,\MM,\btheta,\tq)\|_{U(\cT,t)} \quad\forall (u,\MM,\btheta,\tq)\in \U{a}(\cT,t)
\end{align}
holds uniformly for $t\in [0,1]$.
As mentioned before, we use the framework from \cite{CarstensenDG_16_BSF}.
For ease of reading let us relate our notation to the one used there:
\begin{align*}
   &X=\U{a}(\cT,t),\quad X_0 = L_2(\Omega)\times\LL_2^s(\Omega)\times\bL_2(\Omega),\quad \hat X=\HRM{a}(\cT,t),\\
   &Y=V(\cT,t),\quad Y_0=\U{a}(t),\quad b(\cdot,\cdot)=b_t(\cdot,\cdot),\\
   &b_0(x,y)=b_t(u,\MM,\btheta,0;z,\TTheta,\btau)\quad
    \text{with $x=(u,\MM,\btheta)$, $y=(z,\TTheta,\btau)$},\\
   &\hat b(\hat x,y)=b_t(0,0,0,\tq;z,\TTheta,\btau)=-\dualRM{\tq}{(z,\TTheta,\btau)}{\cS,t}\quad
    \text{with $\hat x=\tq$, $y=(z,\TTheta,\btau)$}.
\end{align*}
Now, by \cite[Theorem~3.3]{CarstensenDG_16_BSF}, \eqref{infsup} follows from the two inf-sup properties
\begin{align}
   \label{infsup1}
   &\text{\cite[Ass.~3.1]{CarstensenDG_16_BSF}:}\
   \sup_{0\not=(z,\TTheta,\btau)\in \U{a}(t)}
   \frac{b_t(u,\MM,\btheta,0;z,\TTheta,\btau)}{\|(z,\TTheta,\btau)\|_{V(\cT,t)}}
   \gtrsim \|u\| + \|\MM\| + t^{1/2}\|\btheta\|
   \\
   \label{infsup2}
   &\hspace{0.5\textwidth}\forall (u,\MM,\btheta)\in L_2(\Omega)\times\LL_2^s(\Omega)\times\bL_2(\Omega),
   \nonumber\\
   &\text{\cite[(18)]{CarstensenDG_16_BSF}:}\qquad
   \sup_{0\not=(z,\TTheta,\btau)\in V(\cT,t)}
   \frac{\dualRM{\tq}{(z,\TTheta,\btau)}{\cS,t}}{\|(z,\TTheta,\btau)\|_{V(\cT,t)}}
   \gtrsim
   \|\tq\|_{\trRM{\cS,t}} \quad\forall\tq\in\HRM{a}(\cS,t),
\end{align}
and relation
\[
   \U{a}(t) = \{(z,\TTheta,\btau)\in V(\cT,t);\;
              \dualRM{\tq}{(z,\TTheta,\btau)}{\cS,t}=0\ \forall \tq\in \HRM{a}(\cT,t)\}.
\]
This last relation is the statement of Proposition~\ref{prop_jump} (if $t>0$) and
Proposition~\ref{prop_jump0} (if $t=0$).
Inf-sup property \eqref{infsup1} is satisfied due to Lemma~\ref{la_adj_well},
uniformly for $t\in [0,1]$ and subject to the compatibility condition that $\bxi=0$ when $t=0$.
In fact, given $(u,\MM,\btheta)\in L_2(\Omega)\times\LL_2^s(\Omega)\times\bL_2(\Omega)$,
choose $(z^*,\TTheta^*,\btau^*)\in \U{a}(t)$ as the solution of \eqref{adj} with compatible data
$g=u$, $\HH=\MM$ and $\bxi=t^{1/2}\btheta$. Then
\begin{align*}
   \sup_{0\not=(z,\TTheta,\btau)\in \U{a}(t)}
   \frac{b_t(u,\MM,\btheta,0;z,\TTheta,\btau)}{\|(z,\TTheta,\btau)\|_{V(\cT,t)}}
   \ge
   \frac{\|u\|^2 + \|\MM\|^2 + t \|\btheta\|^2}{\|(z^*,\TTheta^*,\btau^*)\|_{U(t)}}
   \gtrsim
   \frac{\|u\|^2 + \|\MM\|^2 + t \|\btheta\|^2}{\|u\| + \|\MM\| + t^{1/2}\|\btheta\|},
\end{align*}
that is, \eqref{infsup1} holds.
Finally, \eqref{infsup2} holds by Proposition~\ref{prop_tr_norms} (with equality and constant $1$).
\end{enumerate}
That $(u,\MM,\btheta)$ satisfies \eqref{prob} and $\tq=\traceRM{\cT,t}(u,\MM,\btheta)$ follows by standard
arguments. This also shows the stated regularity. We have therefore proved Theorem~\ref{thm_stab}.

Recalling that the DPG method delivers the best approximation in the energy norm
$\|\cdot\|_{E(t)}=\|\cdot\|_{V(\cT,t)'}$,
\[
   \|\uu-\uu_h\|_{E(t)} = \min\{\|\uu-\bw\|_{E(t)};\; \bw\in \U{a,h}(\cT,t)\},
\]
to prove Theorem~\ref{thm_DPG}, it is enough to show the uniform equivalence of the energy norm
and the norm $\|\cdot\|_{U(\cT,t)}$.
By definition of the energy norm, $\|\uu\|_{E(t)}\lesssim\|\uu\|_{U(\cT,t)}$ is equivalent
to the boundedness of $b_t(\cdot,\cdot)$, which we have just checked.
The other estimate,  $\|\uu\|_{U(\cT,t)}\lesssim\|\uu\|_{E(t)}$, is the inf-sup property
\eqref{infsup} which also holds. Both estimates hold uniformly for $t\in [0,1]$.

%===================================================================================================
\subsection{Proof of Theorem~\ref{thm_limit}} \label{sec_pf_limit}

By Theorem~\ref{thm_stab} there exists for any $t\in[0,1]$ a unique solution
$(u_t,\MM_t,\btheta_t,\tq_t)\in \U{a}(\cT,t)$ of \eqref{VF}.
Obviously, $\div\Div\MM_t\to\div\Div\MM_0$ in $L_2(\Omega)$ ($t\to 0$)
by \eqref{p1} since $f_t\to f_0$ by assumption and since $\btheta_t=\grad u_t$ due to \eqref{p3}.

Now consider a null sequence of positive numbers $(t_n)$.
By the bound given by Theorem~\ref{thm_stab} and the $L_2$-convergence $f_{t_n}\to f_0$ we have
\begin{align} \label{weak_bound}
   \|u_{t_n}\|^2+\|\MM_{t_n}\|^2 + t_n\|\btheta_{t_n}\|^2
   \le
   \|(u_{t_n},\MM_{t_n},\btheta_{t_n},\tq_{t_n})\|_{U(\cT,t_n)}^2
   \lesssim
   \|f_{t_n}\|^2 \lesssim 1+\|f_0\|^2
\end{align}
for $n$ sufficiently large.
Therefore, there is a subsequence of $(t_n)$, again denoted by $(t_n)$, such that
$(u_{t_n},\MM_{t_n})$ converges weakly to a limit $(u,\MM)\in L_2(\Omega)\times\LL_2^s(\Omega)$.
Note that the symmetry of $\MM$ follows from the symmetry of $\MM_{t_n}$ by testing with skew symmetric tensors
in the weak limit.
Now, selecting $z\in\cD(\Omega)$, $\TTheta\in\DDs(\Omega)$ and $\btau\in\cD(\Omega)$,
it holds $(z,\TTheta,\btau)\in \U{c}(t)\cap\U{s}(t)$ for any $t\in[0,1]$ so that
$\dualRM{\tq_{t_n}}{(z,\TTheta,\btau)}{\cS,t_n}=0$ by Proposition~\ref{prop_jump}.
Thus, formulation \eqref{VF} and the convergence
\begin{align} \label{limit_zero}
   \max\{t_n\|u_{t_n}\|, t_n^2\|\MM_{t_n}\|, t_n\|\btheta_{t_n}\|\} \to 0\quad (n\to\infty)
\end{align}
by \eqref{weak_bound} show that
\begin{align} \label{conv_domain}
   -\vdual{f_{t_n}}{z}
   &=
   b_{t_n}(u_{t_n},\MM_{t_n},\btheta_{t_n},\tq_{t_n};z,\TTheta,\btau)
   \nonumber\\
   &=
   \vdual{u_{t_n}}{\div(\Div\TTheta+t_n(\btau-\grad z)}
   +\vdual{\MM_{t_n}}{\cCinv\TTheta+\Grad(\grad z-t_n^2\Div\TTheta)}
   +t_n\vdual{\btheta_{t_n}}{\btau-\grad z}
   \nonumber\\
   &\to
   \vdual{u}{\div\Div\TTheta}
   +\vdual{\MM}{\cCinv\TTheta+\Grad\grad z}
%   = -\vdual{f_0}{z}
   \quad (n\to\infty).
\end{align}
Since $\vdual{f_{t_n}}{z}\to \vdual{f_0}{z}$ ($n\to\infty$), it
follows that $\MM\in\HdDiv{\Omega}$, $u\in H^2(\Omega)$ with $-\div\Div\MM=f_0$
and $\MM+\cC\Grad\grad u=0$.

Now, to establish the convergence of $\dualRM{\tq_{t_n}}{(z,\TTheta,\btau)}{\cS,t_n}$, we select
\[
   z\in \cD(\overline\cT):=\{z:\; \Omega\to\R;\; z|_T\in\cD(\bar T)\ \forall T\in\cT\},
\]
$\TTheta\in\DDs(\overline\cT)$, $\btau\in\bD(\overline\cT)$ (with analogous definitions).
Since $\tq_{t_n}=\traceRM{\cS,t_n}(u_{t_n},\MM_{t_n},\btheta_{t_n})$ by Theorem~\ref{thm_stab},
definitions \eqref{trT_def}, \eqref{tr_RM} and the relation $\btheta_{t_n}=\grad u_{t_n}$ show that
\begin{align*}
   \dualRM{\tq_{t_n}}{(z,\TTheta,\btau)}{\cS,t_n}
   &=
   \vdual{u_{t_n}}{\div(\Div\TTheta+t_n(\btau-\grad z)}_\cT
   -\vdual{\div\Div\MM_{t_n}}{z}
   \nonumber\\
   &+\vdual{\MM_{t_n}}{\Grad(\grad z-t_n^2\Div\TTheta)}_\cT
   -\vdual{\Grad(\grad u_{t_n}-t_n^2\Div\MM_{t_n})}{\TTheta}
   +t_n\vdual{\btheta_{t_n}}{\btau-\grad z}_\cT.
\end{align*}
As $(u_{t_n},\MM_{t_n},\btheta_{t_n})$ solves~\eqref{prob}, is holds
$\div\Div\MM_{t_n}=-f_{t_n}$ and $\Grad(\grad u_{t_n} - t_n^2\Div\MM_{t_n})=\cCinv\MM_{t_n}$.
Therefore, the convergence $(u_{t_n},\MM_{t_n})\rightharpoonup (u,\MM)$ in $L_2(\Omega)\times\LL_2^s(\Omega)$
and $f_{t_n}\to f_0$ in $L_2(\Omega)$ together with \eqref{limit_zero} induces the limit
\[
   \dualRM{\tq_{t_n}}{(z,\TTheta,\btau)}{\cS,t_n}
   \to
   \vdual{u}{\div\Div\TTheta}_\cT
   +\vdual{f_0}{z}
   +\vdual{\MM}{\Grad\grad z}_\cT
   -\vdual{\cCinv\MM}{\TTheta} \quad (n\to\infty).
\]
Since $f_0=-\div\Div\MM$ and $\cCinv\MM=\Grad\grad u$, using definitions \eqref{trGG}, \eqref{trDD}
and relation \eqref{trRM0}, this reveals that
\begin{align} \label{conv_trace}
   \dualRM{\tq_{t_n}}{(z,\TTheta,\btau)}{\cS,t_n}
   \to\
   &\dual{\traceGG{}(u)}{\TTheta}_\cS - \dual{\traceDD{}(\MM)}{z}_\cS
   =\dualRM{\traceRM{\cT,0}(u,\MM)}{(z,\TTheta)}{\cS,0}
\end{align}
when $n\to\infty$ so that, arguing as in \eqref{conv_domain},
\begin{align*}
   -\vdual{f_0}{z}
   \leftarrow
   -\vdual{f_{t_n}}{z}
   =
   b_{t_n}(u_{t_n},\MM_{t_n},\btheta_{t_n},\tq_{t_n};z,\TTheta,\btau)
   \to
   b_0(u,\MM,\tq;z,\TTheta)
   \quad (n\to\infty)
\end{align*}
for any $(z,\TTheta,\btau)\in \cD(\overline\cT)\times \DDs(\overline\cT)\times\bD(\overline\cT)$
with $\tq=\traceRM{\cT,0}(u,\MM)$ by \eqref{conv_trace}.
If, for $a=c$, $u\in H^2(\Omega)$ satisfies the homogeneous boundary conditions, i.e., $u\in H^2_0(\Omega)$,
then this means that the limit $(u,\MM,\tq)\in \U{c}(\cT,0)$ solves the Kirchhoff--Love problem of the clamped
plate, \eqref{VF} with $t=0$ and $a=c$, so that $(u_0,\MM_0,\tq_0)=(u,\MM,\tq)$.
On the other hand, if, for $a=s$, $u$ and $\MM$ satisfy the homogeneous boundary conditions
$u\in H^2_s(\Omega)$ and $\MM\in\HdDivz{\Omega}$, then the limit
$(u,\MM,\tq)\in \U{s}(\cT,0)$ solves the Kirchhoff--Love problem \eqref{VF} with $t=0$ and $a=s$.

It therefore remains to show the corresponding homogeneous boundary conditions.

{\bf Case $a=c$.}
Selecting $z=0$, $\btau=0$ and $\TTheta\in\DDs(\bar\Omega)$,
the boundary conditions $u_{t_n}=0$, $\grad u_{t_n}-t_n^2\Div\MM_{t_n}=0$ on $\Gamma$, cf.~\eqref{pBCc},
Lemma~\ref{la_tr} and the weak convergence \eqref{conv_trace} show that
\[
   \dualRM{\tq_{t_n}}{(z,\TTheta,\btau)}{\cS,t_n}
   =
   -t_n^2\dual{\MM_{t_n}\nn}{\Div\TTheta}_\Gamma
   \to
   \dual{\traceGG{}(u)}{\TTheta}_\cS\quad (n\to\infty).
\]
On the other hand,
\begin{align*}
   t_n^2\dual{\MM_{t_n}\nn}{\Div\TTheta}_\Gamma
   =
   t_n^2\vdual{\Div\MM_{t_n}}{\Div\Theta}+t_n^2\vdual{\MM_{t_n}}{\Grad\Div\TTheta}
   \to 0\quad (n\to\infty)
\end{align*}
since $\|\MM_{t_n}\|\lesssim 1+\|f_0\|$ (used in \eqref{limit_zero}) and
$t_n\|\Div\MM_{t_n}\|\lesssim 1+\|f_0\|$ for $n$ sufficiently large,
so that $\dual{\traceGG{}(u)}{\TTheta}_\cS=0$. Indeed, by Corollary~\ref{cor_adj_well},
\begin{align*}
  t_n^2 &\|\Div\MM_{t_n}\|^2
  \le
  \|(u_{t_n},\MM_{t_n},\btheta_{t_n})\|_{U(t)}^2
  \\
  &=
  \|u_{t_n}\|^2 + 2t\|\btheta_{t_n}\|^2 + \|\MM_{t_n}\|^2
  + \|\Grad(\grad u_{t_n}-t_n^2\Div\MM_{t_n})\|^2
  + \|f_{t_n}\|^2
  \lesssim
  \|f_{t_n}\|^2 \lesssim 1 + \|f_0\|^2.
\end{align*}
Here, we used relations \eqref{p1}, \eqref{p2}, \eqref{p3} and bound \eqref{weak_bound}.
Using Lemma~\ref{la_trRM0_asym}, specifically relation \eqref{trGGDD},
we conclude that $\dual{\traceGG{}(u)}{\TTheta}_\cS=\dual{\traceDD{}(\TTheta)}{u}_\cS=0$
for any $\TTheta\in\DDs(\bar\Omega)$ so that $u\in H^2_0(\Omega)$ by \cite[Proposition 3.8(i)]{FuehrerHN_19_UFK}.

{\bf Case $a=s$.}
First we show that $u\in H^2_s(\Omega)$. We select $z=0$, $\btau=0$ and
$\TTheta\in\HDiv{\Omega}\cap\HdDivz{\Omega}$.
Then, similarly as before, we conclude that
\[
    \dualRM{\tq_{t_n}}{(z,\TTheta,\btau)}{\cS,t_n}
    =
    \dual{u_{t_n}}{\nn\cdot\Div\TTheta}_\Gamma - t_n^2 \dual{\MM_{t_n}\nn}{\Div\TTheta}_\Gamma
    = 0
    \to
    \dual{\traceGG{}(u)}{\TTheta}_\cS
\]
when $n\to\infty$ since $u_{t_n}=0$ and $\MM_{t_n}\nn=0$ on $\Gamma$.
In other words, $u\in H^2_s(\Omega)$, by Corollary~\ref{cor_jump0} and the density
of $\HDiv{\Omega}\cap\HdDivz{\Omega}$ in $\HdDivz{\Omega}$.

Now, to show that $\MM\in\HdDivz{\Omega}$, we select $z\in H^2_s(\Omega)$ and $\btau=0$, $\TTheta=0$,
and use that $u_{t_n}=0$, $\MM_{t_n}\nn=0$ on $\Gamma$, cf.~\eqref{pBCs},
and $\btheta_{t_n}=\grad u_{t_n}$. Then Lemma~\ref{la_tr} and the weak convergence \eqref{conv_trace}
imply that
\[
   \dualRM{\tq_{t_n}}{(z,\TTheta,\btau)}{\cS,t_n}
   =
   -\dual{\nn\cdot\Div\MM_{t_n}}{z}_\Gamma
   =
   0
   \to
   -\dual{\traceDD{}(\MM)}{z}_\cS\quad (n\to\infty).
\]
It follows that $\MM\in\HdDivz{\Omega}$, cf.~\eqref{HdDivz_def}.

Finally, since the sequence $(t_n)$ was arbitrary, we have established the weak convergence
of the Reissner--Mindlin solution to the Kirchhoff--Love solution, for the boundary
conditions of the clamped plate and the simply supported plate.
%===================================================================================================
% NUMERIX
%%%%%%%%%%
\section{Numerical experiment} \label{sec_num}
In this section we study a simple model problem with smooth solutions (depending on $t$).
As mentioned before, a fully discrete analysis (taking an approximation of optimal test
functions into account) is an open subject. Also the construction of low-regular
basis functions for the discretization of trace spaces is ongoing research.
Here, we are only interested to investigate robustness of our scheme with respect to the parameter $t>0$.

Our constructed model problem is as follows. We consider a plate with mid-surface
$\Omega = (0,1)^2$ and select $\cC$ as the identity.
Given the (rescaled) rotation vector
\begin{align*}
  \bpsi(x,y) = \begin{pmatrix}
    y^3(1-y)^3 x^2(1-x)^2 (2x-1) \\
    x^3(1-x)^3 y^2(1-y)^2 (2y-1)
  \end{pmatrix}
\end{align*}
we set $\MM := -\Grad\bpsi$ and select the (rescaled) bending load $f:=-\dDiv\MM$.
The deflection $u\in H^2(\Omega)\cap H_0^1(\Omega)=H^2_s(\Omega)$ can then be obtained from relation
\(
  \grad u = \bpsi + t^2 \Div\MM.
\)
Note that $f$ and $\MM$ are independent of the thickness parameter $t$ whereas the deflection $u$ depends
on this parameter.
Furthermore, one verifies that the solution $u$ of this problem satisfies
the clamped plate boundary conditions \eqref{pBCc} as well as the boundary conditions \eqref{pBCs}
of the simply supported plate.
In the example presented here we only consider the latter pair of boundary conditions, \eqref{pBCs},
that is, $a=s$ in the setting of our spaces.

Recall from Section~\ref{sec_uw} the ansatz space
\begin{align*}
  \U{s}(\cT,t) := L_2(\Omega)\times\LL_2^s(\Omega)\times\bL_2(\Omega)\times \HRM{s}(\cS,t).
\end{align*}
We replace the spaces for the $L^2(\Omega)$ field variables $u,\MM,\btheta$ by spaces of element-wise constant functions,
i.e., $L_2(\Omega)\times\LL_2^s(\Omega)\times\bL_2(\Omega)$ is replaced by
\begin{align*}
  P^0(\cT) \times P^0(\cT)^{2\times 2}\cap \LL_2^s(\Omega) \times P^0(\cT)^2,
\end{align*}
where $P^p(\cT)$ denotes the space of element-wise polynomials of degree $\leq p$.
Here, we use a triangulation $\cT$ of the computational domain where the initial mesh contains four elements.
For the choice of an appropriate approximation space of the traces we utilize the fact that the exact solution $u$ and
$\MM$ are regular, $u\in H^2(\Omega)$ and $\MM \in H^2(\Omega)^{2\times 2} \cap \LL_2^s(\Omega)$.
Let $U_h \subset H^2_s(\Omega)$ denote the space of reduced HCT-elements.
We note that traces of this space have also been
used in our previous works to discretize the ultraweak formulation of the Kirchhoff--Love model problem,
see~\cite{FuehrerHN_19_UFK} and~\cite{FuehrerH_19_FDD}.
We then define the space
\begin{align*}
  \HRM{s,h}(\cS,t) := \{\traceRM{T,t}(u_h,\MM_h,\btheta_h):\; u_h\in U_h,\, \MM_h\in U_h^{2\times 2}\cap
  \LL_2^s(\Omega),\, \btheta_h = \grad u_h\}.
\end{align*}
Elements of this space satisfy the boundary conditions \eqref{pBCs} of the simply supported plate,
that is, $\HRM{s,h}(\cS,t)\subset \HRM{s}(\cS,t)$.

Instead of using optimal test functions in the space $V(\cT,t)$, we consider the finite dimensional space
\begin{align*}
  V_h(\cT,t) = P^3(\cT) \times P^3(\cT)^{2\times 2} \times P^3(\cT)^2
\end{align*}
and use an approximated trial-to-test operator by replacing $V(\cT,t)$ in \eqref{ttt} with $V_h(\cT,t)$.

\begin{figure}
  \begin{center}
    \includegraphics{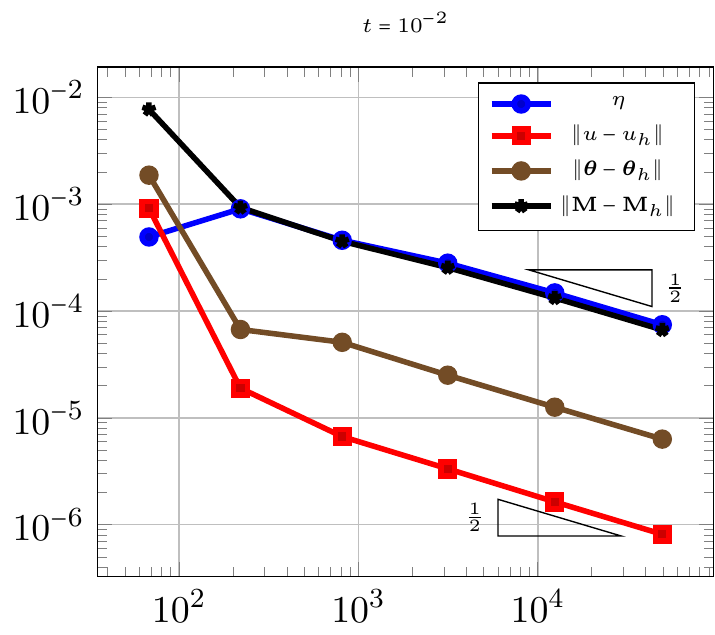}
    \includegraphics{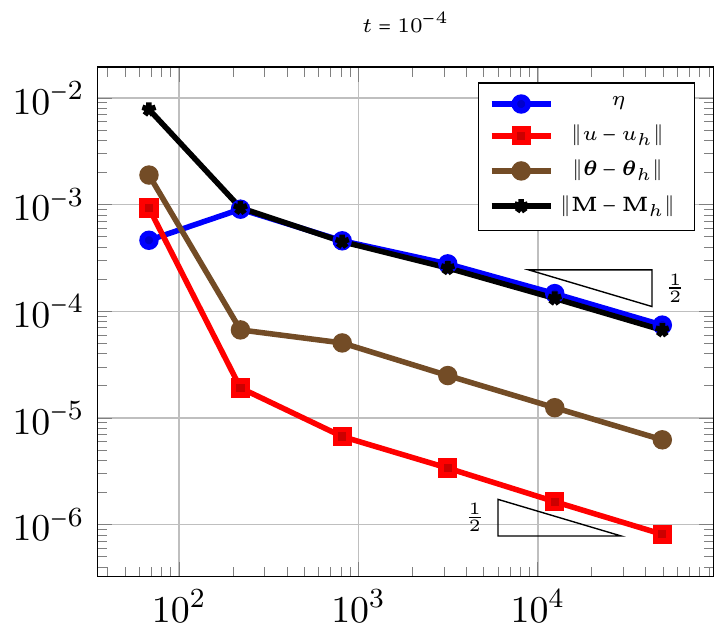}
    \includegraphics{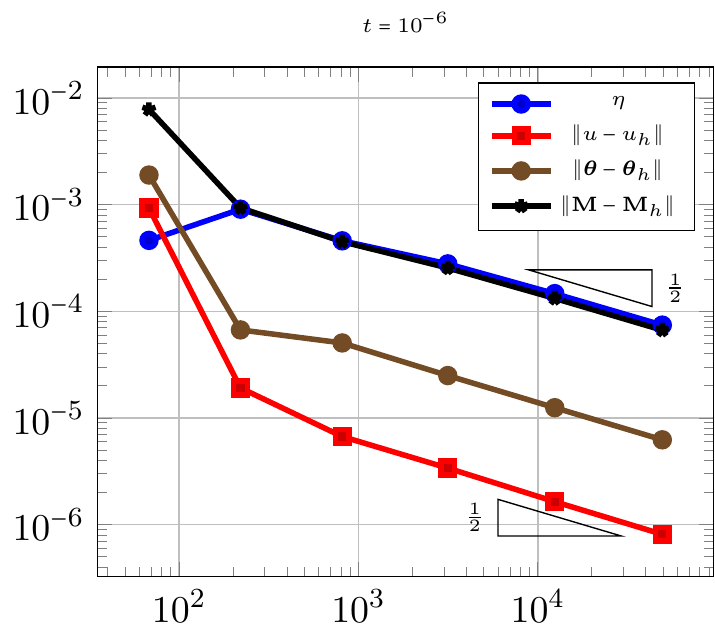}
    \includegraphics{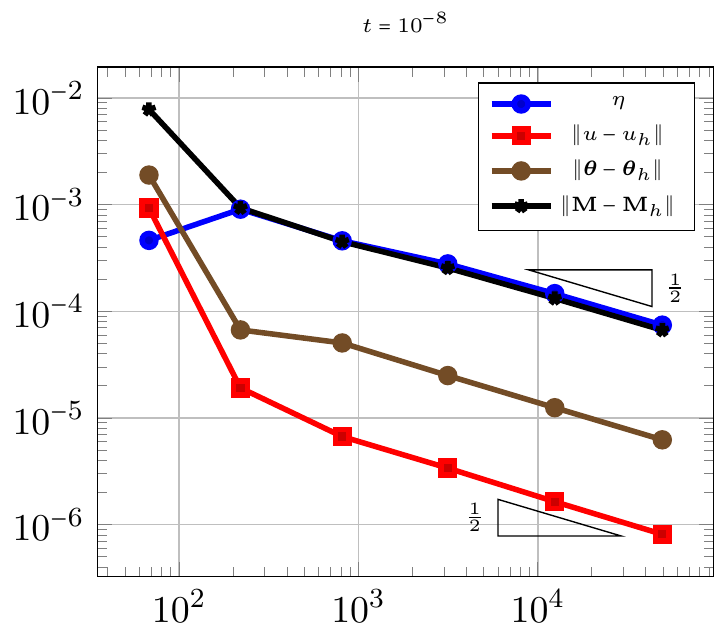}
  \end{center}
  \caption{Errors of the field variables in the $L_2(\Omega)$-norm and estimator $\eta$ versus
           the number of degrees of freedom for $t=10^{-j}$, $j\in\{2,4,6,8\}$.}
  \label{fig_t}
\end{figure}

We perform numerical experiments with a sequence of uniformly refined meshes, and for different
values of $t$ ($t=10^{-j}$, $j\in\{2,4,6,8\}$). Figure~\ref{fig_t} shows the errors of the field variables
$\|u-u_h\|$, $\|\MM-\MM_h\|$, $\|\btheta-\btheta_h\|$ ($\uu_h=(u_h,\MM_h,\btheta_h,\tq_h)$ being the
DPG approximation of $\uu=(u,\MM,\btheta,\tq)$) along with the DPG estimator
\begin{align*}
  \eta = \sup_{0\not=\vv_h=(v_h,\TTheta_h,\btau_h)\in V_h(\cT,t)}
  \frac{b_t(\uu_h;\vv_h)-\vdual{f}{v_h}}{\|\vv_h\|_{V(\cT,t)}}.
\end{align*}
This estimator is an approximation to the error of the residual $\|B_t\uu_h-L\|_{V(\cT,t)'}$,
cf.~\eqref{DPG} and the discussion there.
We observe that $\eta$ is an upper bound for the total error in the field variables, as expected.
Furthermore, the error curves are almost independent of $t$, thus confirming our error estimates
which are uniform in $t$.
We also observe that the error of $\btheta_h$ seems to be controlled in a stronger norm (without $t$-weighting).

%===================================================================================================

\bibliographystyle{siam}
\bibliography{/home/norbert/tex/bib/heuer,/home/norbert/tex/bib/bib}
\end{document}